\documentclass[11pt]{article}

\usepackage[a4paper,margin=2.5cm]{geometry}
\usepackage{amsmath,amssymb,amsthm}
\usepackage{bm}
\usepackage{enumitem}

\usepackage{amsthm}

\usepackage{tikz}
\usetikzlibrary{calc}
\usetikzlibrary{decorations.markings, arrows.meta}

\newtheorem{definition}{Definition}[section]
\newtheorem{proposition}[definition]{Proposition}

\newtheorem{corollary}[definition]{Corollary}

\theoremstyle{remark}
\newtheorem{remark}[definition]{Remark}
\newtheorem{example}[definition]{Example}

\theoremstyle{definition}
\newtheorem{problem}[definition]{Problem}

\usepackage{hyperref}
\usepackage{url}
\hypersetup{colorlinks=true, urlcolor=blue}
\usepackage{cleveref}

\title{
Incidence-based Combinatorial Geometry on Cell Complexes
}

\author{
Andrey P. Jivkov$^{1}$\thanks{Corresponding author: andrey.jivkov@manchester.ac.uk},
Muhammad Azeem$^{1}$,
Aneirin Griffiths$^{1}$,
Pieter D. Boom$^{2}$\\[0.5em]
\small $^{1}$Department of Mechanical and Aerospace Engineering, University of Manchester,\\
\small Manchester M13 9PL, UK\\
\small $^{2}$Department of Mechanical Engineering, King Fahd University of Petroleum and Minerals,\\
\small 31261 Dhahran, Saudi Arabia
}

\date{\today}

\begin{document}
\maketitle

\begin{abstract}

Combinatorial Mesh Calculus (CMC) formulates conservation laws directly on cell complexes using combinatorial differential forms and their cochain representations. This note develops an incidence-based geometric extension of that framework for directional quantities and local geometric structure on explicit cellular organisation. The central construction is a family of local fibres generated intrinsically by the incidence structure of the complex. Each vertex carries a vector space spanned by its incident edge directions, providing a combinatorial analogue of a tangent space whose dimension reflects local topology.

These fibres define local coefficient spaces for incidence-based vector-, covector- and endomorphism-valued cochains. Directed transport maps compare states attached to neighbouring fibres, while a canonical solder form relates fibre directions to the underlying cell-complex structure. Together they give rise to finite combinatorial analogues of transport, torsion, curvature and metric structure. Evaluation cup products provide pairings between kinematic and force-like quantities, while bundle Hodge operators induced by the fibre metric relate vector- and covector-valued cochains, and weighted covariant incidence operators define degree-raising transport-corrected operations on incidence cochains.

The framework does not assume a smooth manifold, fixed-rank bundle, cellular sheaf or local system. Instead, geometric structure is generated directly from the organisation represented by the cell complex. The resulting theory establishes a combinatorial scaffold for geometry on explicit cellular organisation and identifies the principal mathematical questions required for its further development, including admissible transport classes, metric compatibility, Cartan-type structure equations and locality-dependent algebraic structures.

\end{abstract}


\section{Introduction and Motivation}
\label{sec:Introduction}


Combinatorial Mesh Calculus (CMC) was introduced as a framework for formulating conservation laws directly on cell complexes without invoking an underlying continuum description \cite{BERBATOV2022,BERBATOV2026}. In its current form, the theory is based on scalar-valued combinatorial differential forms. Differential operators arise from the incidence structure of the complex, while metric information is introduced through measures assigned to cells. This construction provides strong and weak formulations of scalar transport problems entirely within a combinatorial setting.

Many physical theories cannot be expressed solely in terms of scalar quantities. Displacement, velocity, momentum and force are directional objects. Their formulation requires local coefficient spaces, comparison of quantities attached to different locations, and geometric structures capable of representing compatibility, transport, curvature and torsion. In smooth differential geometry these requirements lead naturally to vector bundles, connections and tensor-valued differential forms \cite{Nakahara2003,Needham2021}. The corresponding constructions rely on tangent bundles and differentiable neighbourhoods and therefore do not transfer directly to explicit cell complexes.

The objective of the present work is to develop an incidence-based combinatorial geometry on cell complexes that provides intrinsically combinatorial analogues of local coefficient spaces, transport, curvature and torsion. The central idea is to derive local coefficient spaces directly from the incidence structure of the complex. Each vertex carries a vector space generated by its incident edge directions. These incidence-generated fibres provide local spaces of admissible directions and form the coefficient spaces for fibre-valued incidence cochains. Their dimension is determined by local topology and may vary from vertex to vertex. The incidence-generated and generally variable-dimensional nature of these coefficient spaces distinguishes the present construction from the fixed-rank bundles commonly used in discrete exterior calculus and related finite-element constructions \cite{Hirani2003,Desbrun2008,Arnold2006,Arnold2018}. The coefficient spaces are therefore determined by the organisation represented by the complex itself rather than prescribed independently of it. Algebraically, the resulting family of vertex spaces bears a conceptual relation to the stalk data of cellular sheaf theory \cite{Shepard1985cellular,Curry2014sheaves}. It is not, by itself, a cellular sheaf or local system: those structures require additional cellwise data and compatibility conditions. The relation is therefore used here only as motivation unless such conditions are imposed explicitly.

The physical motivation is equally important. A vector-valued 0-cochain is interpreted as a local directional state defined directly on the computational complex.  Neighbouring values belong to different fibres and in general cannot be compared directly. A transport structure is therefore required. Transport maps provide admissible comparisons between neighbouring local states and lead to transported differences and weighted degree-raising incidence operators. Through dual fibres, evaluation cup products, and bundle Hodge star operators, kinematic and force-like quantities are paired into scalar energetic fields, providing the starting point for stress-like quantities, balance laws and variational formulations on cellular structures.

The resulting framework introduces five core ingredients: a computational complex, a family of local fibres, a solder form, a directed transport structure, and a fibre metric. Weighted incidence operators and incidence inner products additionally require anchoring data and incidence weights. Curvature is introduced through oriented holonomies, torsion through transported solder forms, and degree-raising operations through weighted transport-corrected incidence sums. The resulting geometry is neither derived from a smooth manifold nor obtained by discretising a pre-existing vector bundle. Instead, local coefficient spaces, admissible directions and transport structures are generated directly from the organisation represented by the cell complex.

A central theme of the present work is locality. In conventional differential geometry locality is infinitesimal and fixed by the smooth structure. On explicit cell complexes locality becomes an independent structural choice. Transport maps, differential operators and algebraic constructions may depend on the combinatorial neighbourhood over which interactions are permitted. Consequently, locality is treated here not as a modelling parameter but as a geometric variable capable of influencing which algebraic and geometric properties become observable.

The present note is exploratory. Several constructions are proposed and analysed, but many questions remain open. Among them are the identification of admissible transport classes, metric-compatible connections, constitutive operators for the incidence-based framework, the relation between operational and differential definitions of curvature and torsion, and the emergence of locality-dependent algebraic structures. The purpose of the note is therefore twofold: to establish the current mathematical framework and to identify the principal problems governing its further development.


\section{Physical and Computational Complexes}
\label{sec:Complexes}

\subsection{Cell complexes, chains and cochains}
\label{subsec:Complexes_General}

This section recalls the algebraic structures underlying scalar Combinatorial Mesh Calculus and fixes notation used throughout the incidence-based geometric framework.

Let
\[
\mathcal{Z} = \bigcup_{p=0}^{D} Z_p
\]
be a finite oriented $D$-dimensional cell complex, where $Z_p$ denotes the set of oriented $p$-cells \cite{kozlov2008}. For cells $\sigma\in Z_p$ and $\tau\in Z_q$ with $p<q,$
\[
\sigma\prec\tau
\]
denotes the incidence relation that $\sigma$ is a face of $\tau.$ The orientation of the complex determines the boundary operator
\[
\partial_p : C_p(\mathcal{Z}) \longrightarrow C_{p-1}(\mathcal{Z}),
\]
where
\[
C_p(\mathcal{Z}) = \operatorname{span}_{\mathbb R}(Z_p)
\]
is the space of $p$-chains. The resulting chain complex is
\[
0
\longrightarrow C_D(\mathcal{Z}) \xrightarrow{\partial} \cdots \xrightarrow{\partial} C_0(\mathcal{Z}) \longrightarrow 0,
\]
with
\[
\partial^2 = 0.
\]

\begin{definition}[Cochains]
The space of $p$-cochains is
\[
C^p(\mathcal{Z}) = \operatorname{Hom}(C_p(\mathcal{Z}),\mathbb R).
\]
The transpose of the boundary operator defines the coboundary 
\[
\delta : C^p(\mathcal{Z}) \longrightarrow C^{p+1}(\mathcal{Z}),
\]
satisfying
\[
\delta^2 = 0.
\]
\end{definition}

The corresponding cochain complex is
\[
0 \longrightarrow C^0(\mathcal{Z}) \xrightarrow{\delta} \cdots \xrightarrow{\delta} C^{D}(\mathcal{Z}) \longrightarrow 0.
\]

The operators $\partial$ and $\delta$ depend only on incidence and orientation. No metric, embedding, or geometric structure is required.

\begin{remark}
The topological constructions introduced here remain unchanged throughout the combinatorial fibre framework. Connections, metrics, and fibre structures introduced later do not modify the underlying chain and cochain complexes.
\end{remark}

\subsection{Combinatorial differential forms}
\label{subsec:Complexes_Forms}

Scalar Combinatorial Mesh Calculus is formulated in terms of Forman's combinatorial differential forms \cite{Forman2002}.

\begin{definition}[Combinatorial differential form]
A combinatorial differential $q$-form on $\mathcal{Z}$ is a linear map
\[
\omega : C_\bullet(\mathcal{Z}) \longrightarrow C_\bullet(\mathcal{Z})
\]
of degree $-q.$ Thus
\[
\omega : C_p(\mathcal{Z}) \longrightarrow C_{p-q}(\mathcal{Z}), \qquad p\ge q.
\]
For every cell $\tau\in Z_p,$
\[
\omega(\tau) = \sum_{\substack{\sigma\prec\tau\\ 
\sigma\in Z_{p-q}}} \omega^\tau_\sigma\,\sigma.
\]
\end{definition}
The coefficients vanish whenever $\sigma$ is not incident with $\tau.$ Combinatorial differential forms are therefore intrinsically local: their action depends only on the incidence structure of the complex.

The graded vector space of combinatorial forms is
\[
\Omega^\bullet(\mathcal{Z}) = \bigoplus_{q=0}^{D} \Omega^q(\mathcal{Z}).
\]
The combinatorial exterior derivative
\[ d : \Omega^q(\mathcal{Z}) \longrightarrow \Omega^{q+1}(\mathcal{Z})
\]
is defined by
\[
d\omega = \omega\circ\partial - (-1)^q \partial\circ\omega.
\]
It satisfies
\[
d^2 = 0.
\]
Consequently,
\[
(\Omega^\bullet(\mathcal{Z}),d)
\]
forms a differential complex.

\begin{remark}
The operator \(d\) is the fundamental differential operator of scalar CMC. Later sections introduce a weighted degree-raising operator on fibre-valued incidence cochains; no combinatorial fibre extension of Forman's differential-form space is assumed.
\end{remark}

\subsection{Physical complexes}
\label{subsec:PhysicalComplex}

The starting point of the present framework is an explicit representation of physical organisation.

\begin{definition}[Physical complex]
Let
\[
\mathcal{M}=\bigcup_{p=0}^{D}M_p
\]
be a finite oriented $D$-dimensional cell complex whose cells are simple polytopes. The complex $\mathcal{M}$ is called a \emph{physical complex} if its cells represent physically distinguishable entities, interfaces, junctions, interaction domains, or other structural components of a material system.
\end{definition}

The cells of $\mathcal{M}$ are therefore not introduced as approximation domains for a continuum field. Instead, they constitute the organisation of a physical system. The incidence relations of the complex describe organisation explicitly through adjacency, connectivity, containment, and neighbourhood structure.

The physical interpretation attached to a cell depends on the particular system under consideration. A $D$-cell may represent an individual grain, particle, pore, biological compartment, network domain, or structural component \cite{Fratzl2007,Ellenbroek2015}. Lower-dimensional cells represent interfaces and higher-order interaction structures arising from the incidence relations of the physical fabric.

The distinction between a physical complex and a conventional computational mesh is fundamental. In standard discretisation approaches, cells are numerical devices used to approximate fields defined on an underlying continuum. In contrast, the cells of $\mathcal{M}$ possess direct physical meaning and remain part of the description at all stages of the theory.

\begin{remark}[Organisation as a physical observable]
The primary information carried by $\mathcal{M}$ is organisational. Its cells identify physical entities, while its incidence structure determines admissible relations between them \cite{Mariano2024}. Quantities derived from connectivity, neighbourhood structure, dimensional hierarchy and locality become candidate physical descriptors; they are not auxiliary properties of a discretisation.
\end{remark}

The physical complex provides the topological foundation of the incidence-based geometric framework developed later. Local fibres, transport maps, solder forms and covariant differential operators will all be derived from the explicit organisational information contained in $\mathcal{M}.$ In particular, the local fibres introduced in subsequent sections are constructed from the incidence structure of $\mathcal{M}$ and therefore represent admissible local directions intrinsic to the particular organisation. This construction replaces the tangent-space concept of smooth differential geometry by a combinatorial counterpart generated directly from physical organisation.

\subsection{Computational complexes}
\label{subsec:ComputationalComplex}

The geometric framework developed in this work is formulated on a computational complex derived from the physical complex.

\begin{definition}[Computational complex]
Let $\mathcal{M}$ be a physical complex. The associated computational complex $\mathcal{K}$ is the Forman subdivision of $\mathcal{M},$ whose cells represent incidences between cells of the physical complex.
\end{definition}

Under the assumption that the physical complex consists of simple polytopes, the Forman subdivision is a cubical complex and therefore inherits a natural dimensional and incidence structure suitable for combinatorial differential operations.

The significance of the computational complex follows from Forman's identification of combinatorial differential forms with cochains on the subdivision complex \cite{Forman2002}.

\begin{proposition}[Forman correspondence]
Let
\[
\Omega^\bullet(\mathcal{M})
\]
denote the graded space of combinatorial differential forms on the physical complex and
\[
C^\bullet(\mathcal{K})
\]
the graded cochain space of the computational complex. The Forman subdivision identifies the coefficients of combinatorial differential forms on $\mathcal{M}$ with cochains on $\mathcal{K}$, giving a natural isomorphism 
\[
\Omega^\bullet(\mathcal{M}) \cong C^\bullet(\mathcal{K}).
\]
Under this correspondence, the combinatorial exterior derivative 
\[
d: \Omega^p(\mathcal{M}) \rightarrow \Omega^{p+1}(\mathcal{M})
\]
is represented exactly by the coboundary operator
\[
\delta: C^p(\mathcal{K}) \rightarrow C^{p+1}(\mathcal{K}).
\]
Consequently,
\[
d^2=0 \qquad\Longleftrightarrow\qquad \delta^2=0.
\]
\end{proposition}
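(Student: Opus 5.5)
The plan is to make the Forman subdivision explicit enough that the identification is essentially tautological, and then check that the two differentials agree. Recall that the cells of $\mathcal{K}$ are indexed by incident pairs $(\sigma,\tau)$ with $\sigma\preceq\tau$ in $\mathcal{M}$; the cell $(\sigma,\tau)$ is the cube spanned by barycentres of cells $\rho$ with $\sigma\preceq\rho\preceq\tau$. It therefore has dimension $\dim\tau-\dim\sigma$, and it is a cube because the face poset interval $[\sigma,\tau]$ of a simple polytope is Boolean. The faces of $(\sigma,\tau)$ are the pairs $(\sigma',\tau')$ with $\sigma\preceq\sigma'\preceq\tau'\preceq\tau$, and the codimension-one faces are those where exactly one of $\sigma,\tau$ moves by one dimension.

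\textbf{Step 1 (the linear isomorphism).} By definition, a $q$-form is determined by its coefficients $\omega^\tau_\sigma$ over incident pairs with $\dim\tau-\dim\sigma=q$, and these coefficients can be chosen freely. Likewise, a $q$-cochain on $\mathcal{K}$ is determined by its values on the $q$-cells $(\sigma,\tau)$. I will define
\[
\Phi(\omega)(\sigma,\tau)=\epsilon(\sigma,\tau)\,\omega^\tau_\sigma,
\]
where $\epsilon(\sigma,\tau)=\pm1$ compares a chosen orientation of the cube $(\sigma,\tau)$ with the one induced from the orientations of $\sigma$ and $\tau$. Since $\Phi$ is a signed relabelling of coordinates, it is a bijection in each degree, and its naturality is clear.

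\textbf{Step 2 (the differentials agree).} I will expand
\[
(d\omega)(\tau)=\omega(\partial\tau)-(-1)^q\partial(\omega\tau)
\]
and read off the coefficient on a pair $(\sigma,\tau)$ with $\dim\tau-\dim\sigma=q+1$. The first term contributes $\sum_{\tau'}[\tau:\tau']\,\omega^{\tau'}_\sigma$, summed over facets $\tau'$ of $\tau$ that contain $\sigma$. The second term contributes $-(-1)^q\sum_{\sigma'}[\sigma':\sigma]\,\omega^\tau_{\sigma'}$, summed over $\sigma'$ with $\sigma\prec\sigma'\preceq\tau$ of codimension one over $\sigma$. These two index sets are exactly the two families of codimension-one faces of the cube $(\sigma,\tau)$, so $\delta\Phi(\omega)(\sigma,\tau)$ is a sum over the same terms, weighted by the incidence numbers of $\mathcal{K}$.

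\textbf{Main obstacle: the signs.} The substantive part is choosing $\epsilon$ so that the incidence numbers of $\mathcal{K}$ agree with $[\tau:\tau']$ on the $\tau$-faces and with $-(-1)^q[\sigma':\sigma]$ on the $\sigma$-faces. My first choice is to orient $(\sigma,\tau)$ as the product-type orientation ``$\tau$ relative to $\sigma$'', meaning the orientation of $\tau$ contracted by that of $\sigma$ through the normal directions. The factor $(-1)^q$ then arises from moving a normal direction past the $q$ remaining directions. I would verify this locally on a single cube, using that the interval is Boolean, and fix the sign convention by induction on $q$. If a global $\epsilon$ turns out to be awkward, an alternative is to define the orientation of $\mathcal{K}$ so that it is induced by $\Phi$ itself: declare $\partial_{\mathcal{K}}$ to be the transpose of $d$ under the unsigned identification, and then show it is the cellular boundary of the subdivision. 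The required fact $\partial_{\mathcal{K}}^2=0$ follows from $d^2=0$, which holds by the paper's earlier remark, and geometric realisability follows from the face description above.

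Once $\Phi d=\delta\Phi$ is established, the equivalence $d^2=0\iff\delta^2=0$ is immediate, since $d^2=\Phi^{-1}\delta^2\Phi$.
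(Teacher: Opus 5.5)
The paper gives no proof of this proposition. It quotes the correspondence from Forman, so the only benchmark is the direct verification that such a citation stands for, and your outline is exactly that. Your argument is correct: the identification in Step~1 and the two index sets in Step~2 are right. The sign step you flag closes with a single local computation, with no induction on $q$.

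Write $k=\dim\tau-\dim\sigma$. Use the affine coordinates $x_1,\dots,x_k\ge0$ on $\tau$ defined by the $k$ facets $\tau'_j\supset\sigma$. These exist and are independent because $\tau$ is simple, and they are coordinates on the direction space of $\tau$ modulo that of $\sigma$. Let $\sigma'_j$ be the face of $\tau$ covering $\sigma$ on which $x_i=0$ for all $i\neq j$. Combinatorially the cell $(\sigma,\tau)$ is then the cube $[0,1]^k$, with $(\sigma,\tau'_j)$ its face $\{x_j=0\}$ and $(\sigma'_j,\tau)$ the opposite face. Orienting the non-flat cell through this projection is consistent, because the projection is non-degenerate, with a single orientation sign, on each of its barycentric simplices.

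Use the outward-normal-first convention for induced boundary orientations, and orient every cell of $\mathcal K$ by $[(\sigma,\tau)]\wedge[\sigma]=[\tau]$.
\begin{itemize}
\item The face $\{x_j=0\}$ has the same outward normal as $\tau'_j\subset\partial\tau$. This gives $[(\sigma,\tau):(\sigma,\tau'_j)]=[\tau:\tau'_j]$.
\item The opposite face has outward normal opposite to that of $\sigma'_j$ at $\sigma$, and this normal must be moved past the $q=k-1$ directions of $(\sigma'_j,\tau)$. This gives $[(\sigma,\tau):(\sigma'_j,\tau)]=-(-1)^q[\sigma'_j:\sigma]$.
\end{itemize}
These are exactly the coefficients of $d$ found in Step~2, so $\varepsilon\equiv1$ for this choice. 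The other order, $[\sigma]\wedge[(\sigma,\tau)]=[\tau]$, differs by $(-1)^{\dim\sigma(\dim\tau-\dim\sigma)}$, and that sign is then the $\varepsilon$ you need. With the outward-normal-last convention the roles of the two orders are swapped.

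One caution about your fallback route. The conditions $\partial_{\mathcal K}^2=0$ and entries $\pm1$ exactly on codimension-one faces do not by themselves make $\partial_{\mathcal K}$ a cellular boundary: declaring $\partial e=v+w$ on a single edge satisfies both. You would also need the edge condition $[e:v]+[e:w]=0$. It does hold here, since for $\dim\tau=\dim\sigma+1$ the transpose of $d$ gives $\partial_{\mathcal K}(\sigma,\tau)=[\tau:\sigma]\bigl((\sigma,\sigma)-(\tau,\tau)\bigr)$.

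You would then still need the standard lemma that, on a regular CW complex, every incidence function with these properties is induced by a choice of cell orientations. It is proved by induction on dimension, using that each cell boundary is a sphere whose top cycle group is generated by a cycle with all coefficients $\pm1$. The face description alone does not supply this lemma, so ``geometric realisability follows from the face description'' is not enough as stated. The explicit orientation above makes the detour unnecessary.
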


The correspondence permits all combinatorial differential operations to be expressed algebraically in terms of coboundaries on $\mathcal{K}.$ This observation constitutes the foundation of scalar Combinatorial Mesh Calculus and remains fundamental for the combinatorial fibre extension developed later.

In the present framework, physical interpretation is attached to the cells of $\mathcal{M},$ whereas algebraic operations are carried out on the cochains of $\mathcal{K}.$ The two complexes therefore play complementary roles:
\begin{itemize}
\item $\mathcal{M}$ describes the physical organisation and its constituent entities and relations;
\item $\mathcal{K}$ provides the computational and algebraic structure required for differential operations.
\end{itemize}

\paragraph{Cup product.}

The cubical structure of the computational complex permits the construction of a cup product \cite{Wilson2007,Arnold2012},
\[
\smile : C^p(\mathcal{K})\times C^q(\mathcal{K}) \longrightarrow C^{p+q}(\mathcal{K}).
\]
The product is assembled from pairs of topologically orthogonal subcells sharing a unique common 0-cell. The contribution associated with a $(p+q)$-cell is obtained by summing appropriately weighted products of the cochain values on these perpendicular subcells. The precise construction is given in \cite{BERBATOV2026}.

The cup product is compatible with the coboundary operator through the graded Leibniz identity
\[
\delta(\alpha\smile\beta) = \delta\alpha\smile\beta + (-1)^p \alpha\smile\delta\beta.
\]

Through Forman's correspondence, the cup product induces a corresponding product on combinatorial differential forms on the physical complex, yielding
\[
(\Omega^\bullet(\mathcal{M}),d,\wedge) \cong (C^\bullet(\mathcal{K}),\delta,\smile).
\]

The cup product plays a central role in scalar CMC. In particular, it enters the construction of discrete Hodge-star operators and variational formulations. 

\begin{remark}
Unlike the smooth exterior product, the cubical cup product used in scalar CMC is generally not associative. 
\end{remark}

\begin{remark}[Transition to bundle coefficients]
In the scalar theory, the cup product pairs real-valued cochains by multiplying scalar coefficients evaluated at the common $0$-cell $N(b,c)$ of topologically orthogonal subcells $b \perp_\chi c$ \cite{BERBATOV2026}. In Section~\ref{sec:BundleValued}, this stencil is directly generalized to the evaluation cup product $\smile_{ev},$ where scalar multiplication is replaced by the canonical evaluation pairing between dual fibres $F_a^*$ and vector fibres $F_a.$
\end{remark}

\subsection{Measures and Metric-Dependent Operators}
\label{subsec:Complexes_Metric}

The operators introduced so far depend only on incidence and orientation. Metric-dependent constructions are obtained by assigning positive values to cells.

\begin{definition}[Cell measures]
Let
\[
\mu : \mathcal{K} \rightarrow \mathbb R_{>0}
\]
be a function assigning positive values to the cells of the computational complex. For each cell
\[
\sigma\in\mathcal{K},
\]
the value
\[
\mu(\sigma)
\]
is called its measure.
\end{definition}

The measure function introduces quantitative information while remaining independent of the incidence structure. Its values may be chosen according to the application and need only satisfy positivity.

The construction of metric-dependent operators requires the notion of topological orthogonality.
\begin{definition}[Topological orthogonality]
Let
\[
\chi\in\mathcal{K}_D, \qquad \sigma\in \mathcal{K}_p, \qquad
\tau\in \mathcal{K}_{D-p}.
\]
The cells $\sigma$ and $\tau$ are said to be
\emph{topologically orthogonal} with respect to $\chi,$
\[
\sigma \perp_\chi \tau,
\]
if both are faces of $\chi$ and intersect in a single common
$0$-cell.
\end{definition}

Topologically orthogonal cells play the combinatorial role of complementary orthogonal faces of the same top-dimensional cell \cite{BERBATOV2026}.

\paragraph{Inner products.}

The measure structure induces inner products on cochain spaces. In the scalar CMC, basis cochains are chosen to be orthogonal. For
\[
\sigma\in \mathcal{K}_p,
\]
the induced basis norm is
\[
\langle \sigma^\bullet,\sigma^\bullet\rangle_p =
\frac{1}{2^D\mu(\sigma)} \sum_{\substack{\chi\in\mathcal{K}_D\\ \tau\in \mathcal{K}_{D-p}\\ \sigma\perp_\chi\tau}} \mu(\tau).
\]
This construction defines the inner product
\[
\langle\cdot,\cdot\rangle_p: C^p(\mathcal{K})\times C^p(\mathcal{K}) \rightarrow \mathbb R.
\]
Its physical dimension is 
\[
[\langle\cdot,\cdot\rangle_p] = L^{D-2p},
\]
in agreement with the continuum theory \cite{BERBATOV2026}.

\paragraph{Hodge-star operators.}

The Hodge-star operator is defined from the cochain inner products and the cup product.

\begin{proposition}[Discrete Hodge star]
For
\[
0\le p\le D,
\]
there exists a unique operator
\[
\star_p : C^p(\mathcal{K}) \rightarrow C^{D-p}(\mathcal{K})
\]
satisfying
\[
\langle \star_p\sigma, \tau \rangle_{D-p} = (\sigma\smile\tau)[\mathcal{K}], \]
for all
\[
\sigma\in C^p(\mathcal{K}), \qquad \tau\in C^{D-p}(\mathcal{K}).
\]
\end{proposition}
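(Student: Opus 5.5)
The plan is to apply the Riesz representation theorem in finite dimensions. Fix $\sigma\in C^p(\mathcal{K})$ and consider the map
\[
\ell_\sigma : C^{D-p}(\mathcal{K})\to\mathbb R,\qquad \ell_\sigma(\tau)=(\sigma\smile\tau)[\mathcal{K}],
\]
where $[\mathcal{K}]=\sum_{\chi\in\mathcal{K}_D}\chi$ is the fundamental $D$-chain of the oriented complex. We first check that $\ell_\sigma$ is linear. The cup product is bilinear, since it is assembled from sums of products of cochain values on pairs of topologically orthogonal subcells. Evaluation on a fixed chain is also linear. Hence $\ell_\sigma$ is linear, and $\sigma\mapsto\ell_\sigma$ is linear as well.

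The second step is to confirm that $\langle\cdot,\cdot\rangle_{D-p}$ is a genuine inner product on the finite-dimensional space $C^{D-p}(\mathcal{K})$. It is defined as diagonal in the basis $\{\tau^\bullet\}_{\tau\in\mathcal{K}_{D-p}}$, with diagonal entries
\[
\langle \tau^\bullet,\tau^\bullet\rangle_{D-p}=\frac{1}{2^D\mu(\tau)}\sum_{\chi,\ \rho\in\mathcal{K}_p,\ \tau\perp_\chi\rho}\mu(\rho).
\]
These entries are strictly positive provided each sum is nonempty. This is the main point needing care. For a pure $D$-dimensional cubical complex (the Forman subdivision of a complex of simple polytopes), every $(D-p)$-cell $\tau$ lies in some $D$-cube $\chi$. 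Inside that cube, any vertex $v$ of $\tau$ determines a unique complementary $p$-face $\rho$ of $\chi$ through $v$, and $\tau\cap\rho=\{v\}$. So the sum contains at least one term. Combined with $\mu>0$, this makes the Gram matrix diagonal with positive entries, so it is symmetric positive definite. If $\mathcal{K}$ were not pure, we would add purity as a standing hypothesis, which is implicit in the Forman subdivision of a pure physical complex.

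The third step is Riesz. Because the Gram matrix $G_{D-p}$ is positive definite, the map $C^{D-p}\to (C^{D-p})^*$, $\eta\mapsto\langle\eta,\cdot\rangle_{D-p}$, is an isomorphism. Hence there is a unique $\eta_\sigma$ with $\langle\eta_\sigma,\tau\rangle_{D-p}=\ell_\sigma(\tau)$ for all $\tau$, and we set $\star_p\sigma=\eta_\sigma$.

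Linearity of $\star_p$ follows from the linearity of $\sigma\mapsto\ell_\sigma$ together with uniqueness. Uniqueness of the operator follows from pointwise uniqueness: if two operators both satisfy the identity, their difference $\Delta$ satisfies $\langle\Delta\sigma,\tau\rangle_{D-p}=0$ for all $\tau$, so $\Delta\sigma=0$ for every $\sigma$.

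Concretely, the operator is
\[
(\star_p\sigma)(\tau)=\frac{(\sigma\smile\tau^\bullet)[\mathcal{K}]}{\langle\tau^\bullet,\tau^\bullet\rangle_{D-p}}.
\]
This formula also confirms that $\star_p$ is local, since the cup product only couples topologically orthogonal cells within a common $D$-cell. The only real obstacle is the positivity in the second step; everything else is finite-dimensional linear algebra.
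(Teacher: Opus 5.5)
Your proposal is correct and takes the paper's approach: the paper recalls this scalar statement from the cited CMC work without proof, but proves its fibre analogue (Proposition~\ref{prop:bundle-hodge-existence}) by the same finite-dimensional Riesz representation argument applied to the linear functional $\tau\mapsto(\sigma\smile\tau)[\mathcal{K}]$. Your explicit check that each diagonal weight $\langle\tau^\bullet,\tau^\bullet\rangle_{D-p}$ is strictly positive, via a complementary face in some top cube and therefore requiring $\mathcal{K}$ to be pure, supplies a detail the paper simply assumes; the caveat is genuine, since a $(D-p)$-cell lying in no $D$-cell would make the form degenerate and destroy uniqueness.
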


The quantity
\[
(\sigma\smile\tau)[\mathcal{K}]
\]
denotes evaluation of the top-dimensional cochain \(\sigma\smile\tau\) on the fundamental chain of the oriented computational complex.

Through Forman's correspondence, the Hodge stars are transferred to combinatorial differential forms on the physical complex.

\begin{remark}[Relation to bundle Hodge operators]
The scalar Hodge star $\star_p$ converts $p$-cochains to $(D-p)$-cochains while preserving the scalar coefficient field $\mathbb{R}.$ In the combintorial fibre framework (Section~\ref{sec:Metrics}), the fibre metric $g_a$ introduces a musical isomorphism between $F_a$ and its dual $F_a^*.$ As a result, the discrete Hodge star splits into two distinct operational maps: a vector-to-covector Hodge star $\star_p^\flat$ (transforming kinematic fields into force/stress fields) and a covector-to-vector Hodge star $\star_p^\sharp.$
\end{remark}

\paragraph{Adjoint coboundaries.}

Let
\[
C^p_0(\mathcal{K}) \subset C^p(\mathcal{K})
\]
denote the subspace of cochains satisfying homogeneous essential boundary conditions. The restriction of the coboundary operator is
\[
\widetilde\delta_{p-1} : C^{p-1}_0(\mathcal{K})
\rightarrow C^{p}_0(\mathcal{K}).
\]

\begin{definition}[Adjoint coboundary]
The adjoint coboundary
\[
\delta^\star_p: C^p_0(\mathcal{K}) \rightarrow C^{p-1}_0(\mathcal{K})
\]
is defined by
\[
\langle \tau, \widetilde\delta_{p-1}\sigma \rangle_p =
\langle \delta^\star_p\tau, \sigma \rangle_{p-1},
\]
for all
\[
\tau\in C^p_0(\mathcal{K}), \qquad \sigma\in C^{p-1}_0(\mathcal{K}).
\]
\end{definition}

Under the correspondence between combinatorial differential forms and computational cochains, this operator represents the codifferential of the combinatorial differential form calculus.

\paragraph{Constitutive information.}

The measure function provides only the geometric component of the metric-dependent operators. In physical applications additional constitutive information is typically associated with incidences rather than individual cells.

For example, diffusivities, conductivities, mobilities, and elastic interaction coefficients are naturally associated with incidences between adjacent 1-cells and 2-cells and therefore modify $\star_1$ and $\star_{D-1}.$

Similarly, storage coefficients, volumetric capacities, and compressibilities are naturally associated with incidences between adjacent 0-cells and $D$-cells and therefore modify $\star_0$ and $\star_D.$

Consequently, discrete Hodge-star operators generally contain both geometric information derived from cell measures and constitutive information describing physical interactions between neighbouring entities.

\paragraph{Summary.}

The metric-dependent part of scalar CMC consists of:
\begin{itemize}
\item positive cell measures;
\item cochain inner products;
\item Hodge-star operators;
\item adjoint coboundaries.
\end{itemize}
These structures depend on the measure function and on the
previously defined topological operations
\[
\partial,\qquad \delta,\qquad \smile,
\]
but they do not modify the underlying chain, cochain, or cup-product structures.

Up to this point, the scalar theory has been recalled without modification. The remainder of this note develops the incidence-based geometric extension obtained by replacing scalar coefficients with locally varying fibres and by introducing the associated transport, metric and curvature structures.

\section{Incidence-generated Fibres and Cochains}
\label{sec:BundleValued}

\subsection{Local coefficient spaces}
\label{subsec:BundleValued_Motivation}

Scalar-valued cochains associate a single numerical value with each cell. This is sufficient for conservation laws involving scalar quantities but does not provide local state spaces for directional, force-like, or internally structured quantities.

The framework developed below preserves the underlying cochain structure of scalar CMC while replacing the fixed scalar coefficient field by a family of incidence-generated local vector spaces attached to the vertices of the computational complex.

\subsection{Local fibres}
\label{subsec:BundleValued_Fibres}

Let
\[
a\in \mathcal{K}_0.
\]
Its incident 1-cell star is
\[
\operatorname{St}_1(a) = \{\,e\in \mathcal{K}_1 : a\prec e\,\}.
\]
The closed vertex neighbourhood of \(a\) is
\[
N(a) = \{\,b\in \mathcal{K}_0 : a=b \text{ or } \exists e\in \mathcal{K}_1,\ a\prec e,\ b\prec e\}.
\]
Its set of adjacent vertices is
\[
\operatorname{Adj}(a)=N(a)\setminus\{a\}.
\]
Thus \(N(a)\) consists of \(a\) together with the vertices joined to it by an incident \(1\)-cell. Because \(\mathcal{K}\) is a cubical complex, every \(1\)-cell incident with \(a\) has a unique other endpoint \(b\in\operatorname{Adj}(a)\). We denote the corresponding fibre basis element by
\[
\mathbf e_{a,b}.
\]
The degree of the 0-cell is
\[
n_a = \#\operatorname{St}_1(a)=\#\operatorname{Adj}(a).
\]

\begin{definition}[Local fibre]
The fibre associated with the vertex \(a\) is
\[
F_a = \operatorname{span}_{\mathbb R}
\{\,\mathbf e_{a,b}:b\in\operatorname{Adj}(a)\,\}.
\]
\end{definition}

The basis vectors are indexed by the oriented incidences leaving the reference vertex. More precisely, for every edge joining the vertices
\(a\) and \(b\), the basis element
\[
\mathbf e_{a,b}\in F_a
\]
represents the oriented edge directed from \(a\) towards \(b\). Accordingly,
\[
\mathbf e_{a,b}\in F_a, \qquad \mathbf e_{b,a}\in F_b,
\]
are distinct basis vectors belonging to different fibres. They are not related by a sign change, since they represent different local directions attached to different vertices. Consequently,
\[
F_a \simeq \mathbb R^{n_a}, \qquad \dim F_a=n_a.
\]
Every element
\[
X_a\in F_a
\]
admits the unique expansion
\[
X_a = \sum_{b\in\operatorname{Adj}(a)} X_a^{\,b}\, \mathbf e_{a,b}.
\]
The collection
\[
F = \{F_a\}_{a\in \mathcal{K}_0} 
\]
forms a family of local vector spaces indexed by the vertices of the computational complex. The coefficient spaces are therefore determined by the organisation represented by the complex itself rather than prescribed independently of it.

\begin{remark}
The fibres are generated directly from the local incidence structure of the complex. Unlike tangent spaces in smooth differential geometry, they do not rely on an ambient manifold, coordinates, or a differentiable structure. Incident 1-cells are chosen because they represent the minimal combinatorial notion of local direction. The orientations used to define the fibre bases are local and intrinsic. Every basis vector is directed away from its reference vertex towards the opposite endpoint of the corresponding edge. These local orientations are independent of the global orientations assigned to the $1$-cells in the chain and cochain complexes. The latter enter only through the incidence numbers appearing in boundary and coboundary operators.
\end{remark}

\begin{remark}[Relation to cellular sheaves]
In cellular sheaf theory \cite{Shepard1985cellular,Curry2014sheaves}, vector spaces are assigned to cells and are connected by restriction maps satisfying functorial compatibility. The family $\{F_a\}_{a\in \mathcal{K}_0}$ provides only the vertex-level vector-space data from which such a structure might be developed. It does not by itself define a cellular sheaf or a local system. Its distinctive feature is that each \(F_a\) is generated by \(\operatorname{St}_1(a)\), so that
\[
\dim F_a=n_a
\]
is tied directly to local valence and may vary with \(a\).
\end{remark}

\subsection{Sections}
\label{subsec:BundleValued_Sections}

A section of the fibre family is a map
\[
X : a \longmapsto X(a)\in F_a.
\]
The vector space of all sections is denoted
\[
C^0(\mathcal{K}; F).
\]
Locally,
\[
X(a) = \sum_{b\in\operatorname{Adj}(a)} X_a^{\,b} \mathbf e_{a,b}.
\]
A section assigns a local state to every vertex of the computational complex. Values of a section belong to different fibres and cannot in general be compared directly. The comparison of neighbouring states requires additional geometric structures that will be introduced later.

\subsection{Fibre-valued incidence cochains}
\label{subsec:BundleValued_Cochains}

For every \(p\ge0\), define the set of incidences
\[
\mathcal{K}_{0,p} = \{ (a,\sigma): a\in \mathcal{K}_0,\; \sigma\in \mathcal{K}_p,\; a\prec\sigma\}.
\]
Let
\[
V=\{V_a\}_{a\in \mathcal{K}_0}
\]
be a family of vector spaces indexed by the 0-cells.

\begin{definition}[Fibre-valued incidence cochain]
A \(V\)-valued \(p\)-cochain is a map
\[
\omega : \mathcal{K}_{0,p} \rightarrow V,
\]
such that
\[
\omega(a,\sigma)\in V_a
\]
for every incident pair \((a,\sigma)\).
\end{definition}

The coefficient space is therefore determined by the incident 0-cell, whereas the cochain degree is determined by the dimension of the associated cell. As in the scalar theory, fibre-valued incidence cochains are assumed to respect the orientation of the supporting cell. If the orientation of a \(p\)-cell for $p>0$ is reversed, then the corresponding coefficient changes sign. Thus, for every incident pair,
\[
(a,\sigma)\in\mathcal{K}_{0,p},
\]
one requires
\[
\omega(a,-\sigma) = -\omega(a,\sigma).
\]
For \(p=0\), this convention reduces to the ordinary assignment of fibre values to vertices. Accordingly, fibre-valued incidence cochains remain alternating objects with respect to the orientation of the underlying cells. The additional fibre structure changes only the coefficient space attached to each incidence; it does not alter the orientation conventions inherited from the scalar cochain complex.

\begin{remark}[Motivation for fibre-valued incidence cochains]
Notice that an vector-valued $p$-cochain assigns a vector to every incident pair $(a,\sigma)$ with $a\prec\sigma,$ rather than a single vector to the cell $\sigma$ alone. In standard cochain theories with coefficients in a fixed space $V,$ a cochain simply evaluates directly on cells via $X: \mathcal{K}_p \to V.$ In the present framework, this cell-based assignment is insufficient for two main reasons:
\begin{enumerate}
    \item \textbf{Fibre Inhomogeneity:} The local fibres $F_a$ are intrinsically generated by local vertex stars and vary across vertices ($F_a \neq F_b$ whenever $\operatorname{deg}(a) \neq \operatorname{deg}(b)$). A higher-dimensional cell $\sigma \in \mathcal{K}_p$ ($p \ge 1$) does not possess a single canonical intrinsic fibre of its own.
    \item \textbf{Local Degrees of Freedom and Anchoring for Transport:} Evaluating at the incidence $(a, \sigma)$ anchors the vector selection to $F_a$ while maintaining $\sigma$ as the geometric domain. This provides the necessary coefficient structure for transport maps $P$ and covariant differentiation, while allowing the theory to represent localised, directional, or discontinuous physical quantities independently at each vertex of $\sigma.$
\end{enumerate}
\end{remark}

\subsection{Vector-valued cochains}
\label{subsec:BundleValued_Vectors}

Choosing
\[
V_a= F_a
\]
gives the fibre-valued incidence cochain space
\[
C^p(\mathcal{K};F).
\]
Every value admits the expansion
\[
X(a,\sigma) = \sum_{b\in\operatorname{Adj}(a)} X_{a,\sigma}^{\,b} \,\mathbf e_{a,b}.
\]
For \(p=0\),
\[
C^0(\mathcal{K};F) 
\]
coincides with the space of sections introduced above. The orientation convention implies
\[
X(a,-\sigma)=-X(a,\sigma).
\]

\subsection{Dual fibres and covector-valued cochains}
\label{subsec:BundleValued_Covectors}

The dual fibre at the vertex \(a\) is
\[
F_a^* = \operatorname{Hom}(F_a,\mathbb R).
\]
Let
\[
\{\mathbf e^{a,b}\}
\]
denote the dual basis satisfying
\[
\mathbf e^{a,b} (\mathbf e_{a,c}) = \delta_{bc}.
\]
The resulting cochain spaces are
\[
C^p(\mathcal{K};F^*).
\]
Their elements admit the local representation
\[
\eta(a,\sigma) = \sum_{b\in\operatorname{Adj}(a)} \eta_{a,\sigma}^{\,b}
\mathbf e^{a,b}.
\]
The pairing
\[
F_a^* \times F_a \rightarrow \mathbb R
\]
is defined independently of any metric structure. Additionally,
\[
\eta(a,-\sigma)=-\eta(a,\sigma).
\]

\begin{remark}
The distinction between fibres and dual fibres is fundamental in geometric theories that separate directional and force-like quantities. Kinematic quantities belong naturally to fibres, whereas force-like quantities belong naturally to dual fibres. 
\end{remark}

\subsection{Evaluation cup product}
\label{subsec:BundleValued_Evaluation}

The canonical pairing
\[
F_a^*\times F_a \longrightarrow \mathbb R, \qquad (\eta,X)\mapsto\eta(X),
\]
induces a scalar-valued product between covector-valued and vector-valued cochains.

The construction uses the same stencil as the scalar cup product. Only the multiplication of scalar coefficients is replaced by evaluation of a vector by a covector. Because perpendicular subcells share a common 0-cell, the associated coefficients belong to the same fibre and can therefore be paired without transport.

\begin{definition}[Evaluation cup product]
Let
\[
\eta\in C^p(\mathcal{K};F^*), \qquad X\in C^q(\mathcal{K};F),
\]
and let
\[
a\in \mathcal{K}_{p+q}.
\]
For every perpendicular pair
\[
(b,c)\in \perp_{p,q}a,
\]
where $\perp_{p,q}a$ is defined in \cite{BERBATOV2026}, let
\[
N(b,c)\in \mathcal{K}_0
\]
denote the unique common \(0\)-cell of \(b\) and \(c\). The evaluation cup product is the bilinear map
\[
\smile_{\mathrm{ev}}: C^p(\mathcal{K};F^*) \times C^q(\mathcal{K};F) \longrightarrow C^{p+q}(\mathcal{K})
\]
defined by
\[
(\eta\smile_{\mathrm{ev}}X)(a_\bullet) = \frac{1}{2^{p+q}} \sum_{(b,c)\in\perp_{p,q}a} \operatorname{rel}(a,b,c)\, \eta(N(b,c),b) \left(X(N(b,c),c)
\right),
\]
where $\operatorname{rel}(a,b,c)$ is defined in \cite{BERBATOV2026}.
\end{definition}
Thus the evaluation cup product is obtained from the scalar cubical cup product by replacing the coefficient product
\[
\sigma(b_\bullet)\tau(c_\bullet)
\]
with the fibre evaluation
\[
\eta(N(b,c),b) \left(X(N(b,c),c)\right).
\]
All orientations, signs, weights, and perpendicular-cell pairs are inherited from the scalar cup product. Consequently,
\[
\eta\smile_{\mathrm{ev}}X
\]
is an ordinary scalar-valued \((p+q)\)-cochain. Its value is attached to the \((p+q)\)-cell \(a\), not to an incidence.

If
\[
X\in C^p(K;F), \qquad \eta\in C^q(K;F^*),
\]
then the reversed evaluation cup product is defined by
\[
(X\smile_{\mathrm{ev}}\eta)(a_\bullet) = \frac{1}{2^{p+q}}\sum_{(b,c)\in\perp_{p,q}a}\operatorname{rel}(a,b,c)\, \eta(N(b,c),c) \left(X(N(b,c),b)\right).
\]

\paragraph{Products involving \(0\)-cochains.}
If
\[
\eta\in C^0(\mathcal{K};F^*), \qquad X\in C^q(\mathcal{K};F),
\]
then for \(a\in \mathcal{K}_q\),
\[
(\eta\smile_{\mathrm{ev}}X)(a_\bullet) =
\frac{1}{2^q} \sum_{N\prec a} \eta(N) \left( X(N,a)\right).
\]
Similarly, if
\[
\eta\in C^p(\mathcal{K};F^*), \qquad X\in C^0(\mathcal{K};F), 
\]
then for \(a\in \mathcal{K}_p\),
\[
(\eta\smile_{\mathrm{ev}}X)(a_\bullet) = \frac{1}{2^p} \sum_{N\prec a} \eta(N,a) \left(
X(N)\right).
\]

\paragraph{Example: \(1\)- and \(1\)-cochains.}

For
\[
\eta\in C^1(\mathcal{K};F^*), \qquad X\in C^1(\mathcal{K};F),
\]
and \(f\in \mathcal{K}_2\),
\[
(\eta\smile_{\mathrm{ev}}X)(f_\bullet) = \frac{1}{4} \sum_{(e_i,e_j)\in\perp_{1,1}f}
\operatorname{rel}(f,e_i,e_j)\, \eta(N(e_i,e_j),e_i) \left(
X(N(e_i,e_j),e_j)\right).
\]

\paragraph{Example: \(2\)- and \(1\)-cochains in dimension three.}

For
\[
\tau\in C^2(\mathcal{K};F^*), \qquad u\in C^1(\mathcal{K};F),
\]
and \(c\in \mathcal{K}_3\),
\[
(\tau\smile_{\mathrm{ev}}u)(c_\bullet) = 
\frac{1}{8} \sum_{(f,e)\in\perp_{2,1}c} \operatorname{rel}(c,f,e)\,
\tau(N(f,e),f) \left(u(N(f,e),e) \right).
\]
Thus
\[
\tau\smile_{\mathrm{ev}}u \in C^3(\mathcal{K}),
\]
which may be interpreted as a scalar-valued power, work, energy, or dissipation cochain, depending on the physical meaning of
\(\tau\) and \(u\).

\begin{remark}
The ordinary cup product is order-sensitive at the level of the supporting cochains. Therefore, when the vector-valued factor is placed first and the covector-valued factor second, the same cubical cup-product order is retained. Only the fibre coefficient is evaluated by applying the covector to the vector.
\end{remark}

\begin{remark}
The evaluation cup product does not require transport maps or metric structures. The two coefficients in each summand are evaluated in the same fibre because the perpendicular subcells \(b\) and \(c\) share the node \(N(b,c)\). Metric information enters only through fibre metrics and fibre Hodge operators introduced later.
\end{remark}

\subsection{Endomorphism-valued cochains}
\label{subsec:BundleValued_Endomorphisms}

For every vertex
\[
a\in\mathcal{K}_0,
\]
let
\[
\operatorname{End}(F_a)=\operatorname{Hom}(F_a,F_a)
\]
denote the vector space of linear endomorphisms of the local fibre. The corresponding fibre-valued incidence cochains form
\[
C^p(\mathcal{K};\operatorname{End}(F)).
\]

The principal example of an endomorphism-valued cochain developed in the present work is the curvature endomorphism introduced in Section~\ref{sec:Curvature}. More generally, these spaces provide the natural setting for fibrewise linear operators acting on the local fibres.

As for vector- and covector-valued cochains, orientation is carried by the supporting cell. Thus
\[
A(a,-\sigma)=-A(a,\sigma),
\]
where the minus sign is taken in the vector space
\[
\operatorname{End}(F_a).
\]

In the constructions developed here, endomorphisms always act on the reference fibre associated with the corresponding incidence, and no transport of endomorphisms is required.

\subsection{Fibrewise algebra}
\label{subsec:BundleValued_Algebra}

Since all values associated with a fixed incidence belong to the same fibre, linear algebraic operations may be performed pointwise. For example,
\[
(X+Y)(a,\sigma) = X(a,\sigma)+Y(a,\sigma),
\]
and
\[
(\lambda X)(a,\sigma) = \lambda X(a,\sigma),
\]
define elements of \(C^p(\mathcal K;F)\).

Products require more care because of orientation parity. For a family of coefficient spaces \(W=\{W_a\}\), let
\[
\mathcal I^p(\mathcal K;W)
\]
denote the vector space of all incidence functions
\[
\phi:\mathcal K_{0,p}\longrightarrow
\bigsqcup_{a\in\mathcal K_0}W_a,
\qquad
\phi(a,\sigma)\in W_a,
\]
without imposing an orientation rule. For \(p>0\), define the parity subspaces
\[
\mathcal I^p_\pm(\mathcal K;W)
=
\left\{
\phi\in\mathcal I^p(\mathcal K;W):
\phi(a,-\sigma)=\pm\phi(a,\sigma)
\right\}.
\]
Thus
\[
C^p(\mathcal K;F)
=\mathcal I^p_-(\mathcal K;F),
\qquad
C^p(\mathcal K;F^*)
=\mathcal I^p_-(\mathcal K;F^*)
\]
for \(p>0\).

If
\[
X\in C^p(\mathcal K;F),
\qquad
\eta\in C^p(\mathcal K;F^*),
\]
their pointwise evaluation
\[
\eta(X)(a,\sigma) = \eta(a,\sigma) \left(X(a,\sigma)\right)
\]
satisfies
\[
\eta(X)(a,-\sigma)=\eta(X)(a,\sigma).
\]
It therefore defines an orientation-even scalar incidence function,
\[
\eta(X)\in\mathcal I^p_+(\mathcal K;\mathbb R),
\]
not an ordinary scalar cochain in \(C^p(\mathcal K)\), which has one alternating value per \(p\)-cell.

Likewise,
\[
(X\otimes\eta)(a,\sigma) = X(a,\sigma)\otimes\eta(a,\sigma)
\]
defines an element of
\[
\mathcal I^p_+
\bigl(\mathcal K;F\otimes F^*\bigr).
\]
By contrast, if
\[
A\in C^0(\mathcal K;\operatorname{End}(F)),
\qquad
X\in C^p(\mathcal K;F),
\]
then the pointwise action
\[
(AX)(a,\sigma)=A(a)X(a,\sigma)
\]
is alternating and hence belongs to \(C^p(\mathcal K;F)\).

\begin{remark}
All operations in this subsection are local and require neither transport maps nor metric structures. The distinction between ordinary scalar cochains and incidence functions is nevertheless essential. Aggregating an incidence function into an ordinary cochain requires an additional operation, such as the evaluation cup product or an explicitly weighted projection.
\end{remark}

\begin{remark}[Algebraic operations]
The fibrewise operations act independently at each incidence \((a,\sigma)\). Products of two alternating incidence fields are orientation-even, whereas the action of an orientation-independent \(0\)-cochain preserves the parity of the field on which it acts. This bookkeeping replaces the unsupported identification of pointwise products with ordinary scalar cochains.
\end{remark}

\section{Connections and Transport}
\label{sec:Connections}

\subsection{Need for transport}
\label{subsec:Connections_Need}

Fibre-valued incidence cochains assign values to locally varying coefficient spaces. Consequently, values associated with neighbouring vertices belong to different fibres and cannot, in general, be compared directly.

For scalar-valued cochains this difficulty does not arise, since all quantities belong to the same coefficient field. In the incidence-valued setting, however, the comparison of neighbouring local states requires additional structure.

The purpose of the transport structure is to provide such comparisons. It does so by assigning linear maps between neighbouring fibres. Additional hypotheses determine when this structure qualifies as reversible parallel transport.

\subsection{Directed edge transports}
\label{subsec:Connections_Definition}

Let
\[
e=(a,b)\in \mathcal{K}_1
\]
be an oriented 1-cell joining the vertices
\[
a,b\in \mathcal{K}_0.
\]

\begin{definition}[Directed edge-transport structure]
A directed edge-transport structure is a family
\[
P= \{P_{ab}\}_{(a,b)\in \mathcal{K}_1},
\]
where each oriented 1-cell is assigned a linear map
\[
P_{ab} : F_a \longrightarrow F_b.
\]
\end{definition}

The maps \(P_{ab}\) are called directed transport maps. If the opposite orientation is represented by
\[
(b,a)\in \mathcal{K}_1,
\]
then
\[
P_{ba} : F_b \longrightarrow F_a
\]
is, at this level of generality, independent of \(P_{ab}\). This definition permits transport between fibres of unequal dimensions and includes non-injective and non-surjective maps. 

\begin{definition}[Reversible discrete connection]
A directed edge-transport structure is called a reversible discrete connection if, for every unoriented edge \(\{a,b\}\), the map
\[
P_{ab}:F_a\longrightarrow F_b
\]
is an isomorphism and
\[
P_{ba}=P_{ab}^{-1}.
\]
\end{definition}

A reversible discrete connection can therefore exist along the edge \(\{a,b\}\) only if
\[
\dim F_a=\dim F_b,
\qquad\text{equivalently}\qquad
n_a=n_b.
\]
On complexes with varying vertex valence, the general directed transport structure is consequently the primary object. Reversibility may be imposed only on equal-rank subcomplexes or after replacing the incidence-generated fibres by suitable common-rank subspaces, quotients, or extensions. A reversible discrete connection is therefore a special class of directed transport structures rather than the default transportation mechanism.

\begin{remark}
The reversible case is the closest finite analogue of parallel transport in a vector bundle. The general directed case is weaker: it provides admissible comparisons between neighbouring local state spaces but need not have an inverse. We use the word ``transport'' for both cases and state explicitly whenever reversibility is required.
\end{remark}

\begin{remark}[Relation to quiver representations and cellular sheaves]
The vertex spaces \(F_a\) together with directed edge maps \(P_{ab}\) define a representation of the directed \(1\)-skeleton, viewed as a quiver. They do not automatically define a cellular sheaf, because no stalks on higher-dimensional cells or compatible face-to-coface restriction maps have yet been specified. In the reversible, constant-rank case, path transport may be used to construct local-system data provided that the required path and composition compatibilities are also imposed. The sheaf-theoretic comparison made later in this note is subject to these qualifications.
\end{remark}

\subsection{Path transport}
\label{subsec:Connections_Paths}

Transport along arbitrary paths is obtained by composition.
\begin{definition}[Path transport]
Let $\gamma = (v_0, v_1, \dots, v_k)$ be an oriented discrete path in $\mathcal{K}_0$ from $v_0$ to $v_k.$ The path transport map $P_\gamma: F_{v_0} \to F_{v_k}$ is defined as the ordered composition of elementary neighbour transport maps:
\[
P_\gamma := P_{v_{k-1} v_k} \circ \dots \circ P_{v_1 v_2} \circ P_{v_0 v_1}.
\]
In particular, for a 1-step intermediate path $\gamma = (c, b, a),$ the transport factors explicitly as $P_\gamma = P_{ba} \circ P_{cb}.$
\end{definition}

Path transports permit quantities attached to different vertices to be expressed in a common fibre. They provide the fundamental transport operators from which covariant differentiation, torsion and curvature are constructed. The choice of path becomes part of the geometric data whenever transport is not path independent.

\begin{remark}
If \(P\) is a reversible discrete connection, then every path transport is an isomorphism. For the reversed path
\[
\bar\gamma=(v_k,v_{k-1},\ldots,v_0)
\]
one has
\[
P_{\bar\gamma}=P_\gamma^{-1}.
\]
Invertibility of the individual maps alone is not sufficient for this identity; the reversal condition \(P_{ba}=P_{ab}^{-1}\) is essential.
\end{remark}

\subsection{Dual transport}
\label{subsec:Connections_Dual}

Transport maps between neighbouring fibres induce corresponding transports between dual fibres.

\begin{definition}[Dual transport]
Let
\[
P_{ab} : F_a \longrightarrow F_b
\]
be an edge transport. Its dual transport is the pullback map
\[
P_{ab}^{*} : F_b^{*} \longrightarrow F_a^{*},
\]
defined by
\[
(P_{ab}^{*}\eta)(X) = \eta(P_{ab}X),
\]
for every
\[
\eta\in F_b^{*}, \qquad X\in F_a.
\]
\end{definition}

Thus, vectors are pushed forward in the direction of the transport,
\[
P_{ab} : F_a \rightarrow F_b,
\]
whereas covectors are pulled back,
\[
P_{ab}^{*} : F_b^{*} \rightarrow F_a^{*}.
\]
Every path transport
\[
P_\gamma : F_a \rightarrow F_b
\]
induces a corresponding dual path transport
\[
P_\gamma^{*} : F_b^{*} \rightarrow F_a^{*}
\]
through the same construction. In particular, if a covector attached to the fibre \(F_b^*\) is to be transported to the fibre \(F_a^*\), the required map is
\[
P_{ab}^*:F_b^*\longrightarrow F_a^*,
\]
that is, the pullback associated with the vector transport 
\[
P_{ab}:F_a\longrightarrow F_b.
\]

In the reversible case, the inverse transport induces a canonical map in the same path direction as vector transport. The covector transport associated with
\[
P_{ab}:F_a\longrightarrow F_b
\]
from \(F_a^*\) to \(F_b^*\) is
\[
(P_{ab}^{-1})^*=P_{ba}^*:F_a^*\longrightarrow F_b^*.
\]
For a non-invertible directed transport there is, in general, no canonical covector push-forward \(F_a^*\to F_b^*\); only the pullback \(P_{ab}^*:F_b^*\to F_a^*\) is canonical.

\begin{remark}
The distinction between vector and covector transport is independent of any metric structure. It follows solely from the duality pairing
\[
F_a^{*}\times F_a \rightarrow \mathbb R.
\]
\end{remark}

\subsection{Admissibility of transport structures}
\label{subsec:Connections_Admissibility}

The definition of a directed edge-transport structure specifies only the existence of linear maps between neighbouring fibres. It does not determine these maps uniquely. Consequently, many different transport structures may coexist on the same computational complex.

The choice of an admissible transport structure depends on the intended geometric or physical interpretation. Typical admissibility requirements include locality, invertibility, reversibility, preservation of
distinguished combinatorial directions, metric compatibility, or other algebraic constraints. Different applications may impose different combinations of these conditions. The framework intentionally leaves these choices open.

The present work develops only the general framework. Individual admissibility conditions are introduced progressively as the
corresponding geometric structures become available.

\subsection{Locality constraints}
\label{subsec:Connections_Locality}

One natural admissibility condition concerns the locality of transport. Although the transport maps are defined between the full neighbouring fibres,
\[
P_{ab}:F_a\rightarrow F_b,
\]
their action need not involve every local direction represented in these fibres. For a given edge
\[
e=(a,b),
\]
the fibres
\[
F_a = \operatorname{span}\{\mathbf e_{a,c}:c\in\operatorname{Adj}(a)\},
\qquad
F_b = \operatorname{span}\{\mathbf e_{b,d}:d\in\operatorname{Adj}(b)\},
\]
contain generators associated with all incident 1-cells.

Transport along \(e\), however, may be restricted to directions participating in local
2-cell neighbourhoods containing \(e\). Define
\[
F_a^{(e)}
=
\operatorname{span}
\left\{
\mathbf e_{a,c}
:
\exists\, f\in\mathcal K_2
\text{ such that }
e\prec f
\text{ and }
(a,c)\prec f
\right\}
\subseteq F_a ,
\]
and
\[
F_b^{(e)}
=
\operatorname{span}
\left\{
\mathbf e_{b,d}
:
\exists\, f\in\mathcal K_2
\text{ such that }
e\prec f
\text{ and }
(b,d)\prec f
\right\}
\subseteq F_b .
\]

Thus \(F_a^{(e)}\) and \(F_b^{(e)}\) are generated by the local directions that participate in at least one 2-cell incident to the edge \(e\). The transport associated with \(e\) may then be regarded primarily as a map
\[
P_{ab}:F_a^{(e)}\longrightarrow F_b^{(e)}.
\]

Extensions to the full fibres may be introduced when required.

For a quadrilateral 2-cell, the corresponding local transport spaces contain precisely the edge directions participating in that local 2-cell neighbourhood. Locality is therefore defined combinatorially through incidences rather than through distances or coordinates.

\begin{remark}

Locality constraints do not require transport maps to be invertible or to preserve the full fibre dimension. They restrict only the combinatorial support on which transport acts. Additional admissibility conditions, such as invertibility or metric compatibility, may be imposed independently when required by a particular application.
\end{remark}

\subsection{Transported differences}
\label{subsec:Connections_Differences}

Transport allows neighbouring fibre values to be expressed in a common local state space. Let
\[
X\in C^0(\mathcal{K};F)
\]
be a section and let
\[
e=(a,b)\in \mathcal{K}_1.
\]
The transported value
\[
P_{ab}X(a)
\]
and the local value
\[
X(b)
\]
both belong to the fibre \(F_b\). They can therefore be compared directly. The transported difference
\[
X(b)-P_{ab}X(a)
\]
is the fundamental object from which covariant differential operators are constructed.

Similarly,
\[
X(a)-P_{ba}X(b)
\]
provides the corresponding comparison in the fibre \(F_a\).

\begin{remark}
These transported differences play the role of discrete covariant increments. Their weighted assembly over the computational complex gives rise to the incidence operators introduced later.
\end{remark}

The transport structure provides the comparisons required between neighbouring local states. Once transport is available, one may investigate transported vectors around loops and transported local directions around cell boundaries. These constructions lead respectively to curvature and torsion defects.

\section{Solder Forms and Torsion}
\label{sec:Torsion}

A directed transport structure provides comparisons between neighbouring fibres. A second structure is required to relate the combinatorial structure of the complex to the local coefficient spaces.
This structure is the solder form.

Whereas transport compares fibres attached to different vertices, the solder specifies how the combinatorial structure of the computational complex is represented within each fibre.

\subsection{Solder forms}
\label{subsec:Torsion_Solder}

\begin{definition}[Solder form]
A solder form is a vector-valued 1-cochain
\[
\theta \in C^1(\mathcal{K};F).
\]
Equivalently, for every incidence
\[
(a,e)\in \mathcal{K}_{0,1}, \qquad a\prec e,
\]
the solder assigns a vector
\[
\theta(a,e) \in F_a.
\]
In particular, it satisfies the orientation rule
\[
\theta(a,-e)=-\theta(a,e).
\]
\end{definition}

Thus, the solder associates local fibre representatives with the 1-cells of the computational complex.
It provides the correspondence between combinatorial directions represented by incident edges and directions represented within the local fibres. Unlike transport maps, which compare neighbouring fibres, the solder acts entirely within each fibre individually. 

\paragraph{Canonical solder.}
Let \(e\) be an oriented \(1\)-cell with endpoints \(a\) and \(b\), and write
\[
\partial e=t(e)-s(e).
\]
Thus \(s(e)\) and \(t(e)\) are respectively the initial and terminal vertices of the chosen orientation, and
\[
\varepsilon(e,s(e))=-1,
\qquad
\varepsilon(e,t(e))=+1.
\]
The canonical solder is defined at either endpoint \(a\prec e\) by
\[
\theta_{\mathrm{can}}(a,e)
=-\varepsilon(e,a)\,\mathbf e_{a,b},
\]
where \(b\) is the other endpoint of \(e\). Equivalently,
\[
\theta_{\mathrm{can}}(s(e),e)
=\mathbf e_{s(e),t(e)},
\qquad
\theta_{\mathrm{can}}(t(e),e)
=-\mathbf e_{t(e),s(e)}.
\]
Since \(\varepsilon(-e,a)=-\varepsilon(e,a)\), this definition gives
\[
\theta_{\mathrm{can}}(a,-e)
=-\theta_{\mathrm{can}}(a,e),
\]
as required for a vector-valued \(1\)-cochain. Unless stated otherwise, \(\theta=\theta_{\mathrm{can}}\) will be used.

\begin{remark}[Solder forms as the bridge beyond standard sheaves]
In cellular sheaf theory, stalks are abstract vector spaces without an intrinsic identification with directions in the underlying cell complex \cite{Curry2014sheaves}. The canonical solder anchors the signed fibre basis to the oriented \(1\)-cells incident with each vertex. This additional datum motivates the operational torsion defect defined below.
\end{remark}

\subsection{Endomorphism representation of vector-valued
1-cochains}

The canonical solder establishes a natural correspondence between vector-valued \(1\)-cochains and fibre endomorphisms after one orientation has been fixed for every \(1\)-cell. Let
\[
X\in C^1(\mathcal{K};F).
\]
For every incidence
\[
(a,e)\in \mathcal{K}_{0,1},
\]
the coefficient
\[
X(a,e)\in F_a
\]
belongs to the same fibre as the canonical solder value
\[
\theta(a,e)\in F_a.
\]
Since the canonical solder associates a basis vector of \(F_a\) with every incident \(1\)-cell, the values of \(X\) determine uniquely a linear transformation of the fibre.

\begin{proposition}[Endomorphism representation]
Let the canonical solder be used. For every
\[
X\in C^1(\mathcal{K};F)
\]
and every vertex
\[
a\in \mathcal{K}_0,
\]
there exists a unique endomorphism
\[
A_X(a)\in\operatorname{End}(F_a)
\]
such that
\[
X(a,e)=A_X(a)\,\theta(a,e)
\]
for every incident \(1\)-cell
\[
e\in\operatorname{St}_1(a).
\]
\end{proposition}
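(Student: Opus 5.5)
The plan is to show that the canonical solder values at a vertex form a basis of the fibre, and then invoke the elementary fact that a linear map is uniquely determined by prescribing its values on a basis.

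Fix $a\in\mathcal{K}_0$. Because $\mathcal{K}$ is cubical, the map $e\mapsto b$ sending each incident $1$-cell to its other endpoint is a bijection $\operatorname{St}_1(a)\to\operatorname{Adj}(a)$. The canonical solder gives
\[
\theta(a,e)=-\varepsilon(e,a)\,\mathbf e_{a,b},\qquad \varepsilon(e,a)\in\{\pm1\}.
\]
Hence the family $\{\theta(a,e)\}_{e\in\operatorname{St}_1(a)}$ is obtained from the basis $\{\mathbf e_{a,b}\}_{b\in\operatorname{Adj}(a)}$ of $F_a$ by reindexing and multiplying each element by a nonzero sign. It is therefore itself a basis of $F_a$.

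For existence, define $A_X(a)$ on this basis by $A_X(a)\,\theta(a,e):=X(a,e)$ and extend linearly. Explicitly,
\[
A_X(a)=\sum_{e\in\operatorname{St}_1(a)} X(a,e)\otimes\bigl(-\varepsilon(e,a)\,\mathbf e^{a,b}\bigr),
\]
where $b$ is the other endpoint of $e$. Uniqueness follows because any two endomorphisms agreeing on a basis coincide.

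The main obstacle is checking that this definition is independent of the orientation chosen for each $1$-cell, so that the statement ``for every incident $1$-cell'' is consistent when $e$ is replaced by $-e$. This follows from the two orientation rules:
\[
X(a,-e)=-X(a,e),\qquad \theta(a,-e)=-\theta(a,e).
\]
Applying them, the defining relation for $-e$ reads $-X(a,e)=A_X(a)\bigl(-\theta(a,e)\bigr)$. This is equivalent to the relation for $e$ by linearity, so the same $A_X(a)$ works for both orientations and no ambiguity arises.

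Finally, uniqueness at each vertex shows that $a\mapsto A_X(a)$ is a well-defined element of $C^0(\mathcal{K};\operatorname{End}(F))$. Since the construction is pointwise and linear in the data, the assignment $X\mapsto A_X$ is linear.
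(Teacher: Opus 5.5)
Your proof is correct and follows the paper's argument. The canonical solder values at $a$ are signed, reindexed copies of the incidence basis $\{\mathbf e_{a,b}\}$, hence form a basis of $F_a$, and a linear map is uniquely determined by its values on a basis; your explicit dual-basis formula and your check that the defining relation is invariant under $e\mapsto -e$ are correct additions, which the paper handles only by fixing one orientation per $1$-cell.
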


\begin{proof}
For the chosen edge orientations, the vectors
\[
\{\theta(a,e):e\in\operatorname{St}_1(a)\}
\]
are signed versions of the incidence basis and hence form a basis of \(F_a\). A linear map is uniquely determined by its action on a basis. The values
\[
X(a,e)\in F_a
\]
therefore prescribe the image of every basis vector and hence determine a unique endomorphism
\[
A_X(a).
\]
\end{proof}

Consequently, every vector-valued \(1\)-cochain determines a unique endomorphism-valued \(0\)-cochain
\[
A_X\in C^0(\mathcal{K};\operatorname{End}(F)).
\]
Equivalently,
\[
X(a,e)=A_X(a)\,\theta(a,e)
\]
for every incidence pair
\[
(a,e)\in \mathcal{K}_{0,1}.
\]
The trace
\[
\operatorname{tr}(A_X(a))
\]
is defined independently of any fibre metric and is invariant under changes of basis of \(F_a\). Thus, every vector-valued \(1\)-cochain possesses an associated metric-independent scalar field
\[
a\longmapsto \operatorname{tr}(A_X(a)).
\]

\paragraph{Example.}
Suppose
\[
\dim(F_a)=3
\]
and the vertex \(a\) is incident with the \(1\)-cells
\[
e_1,e_2,e_3.
\]
The canonical solder basis is
\[
\theta(a,e_1)=\mathbf e_1,\qquad
\theta(a,e_2)=\mathbf e_2,\qquad
\theta(a,e_3)=\mathbf e_3.
\]
Let
\[
X(a,e_1)=v_1,\qquad
X(a,e_2)=v_2,\qquad
X(a,e_3)=v_3,
\]
where
\[
v_i\in F_a.
\]
Expanding these vectors in the solder basis,
\[
v_1=
\begin{bmatrix}
v_{11}\\
v_{21}\\
v_{31}
\end{bmatrix},
\qquad
v_2=
\begin{bmatrix}
v_{12}\\
v_{22}\\
v_{32}
\end{bmatrix},
\qquad
v_3=
\begin{bmatrix}
v_{13}\\
v_{23}\\
v_{33}
\end{bmatrix},
\]
the corresponding endomorphism is defined by
\[
A_X(a)\mathbf e_i=v_i.
\]
Its matrix in the solder basis is therefore
\[
A_X(a) = \begin{bmatrix}
v_{11} & v_{12} & v_{13}\\
v_{21} & v_{22} & v_{23}\\
v_{31} & v_{32} & v_{33}
\end{bmatrix}.
\]
The columns of the matrix are precisely the vectors assigned by the \(1\)-cochain to the three incident \(1\)-cells.

\begin{remark}[Dependence on the canonical solder]
Proposition~5.3 relies on the fact that the canonical solder identifies each incident
1-cell with a basis vector of the corresponding fibre. For a general solder form,
the values
\[
\{\theta(a,e):e\in\operatorname{St}_1(a)\}
\]
need not form a basis of \(F_a\).
In that case a vector-valued \(1\)-cochain need not determine a unique
endomorphism-valued \(0\)-cochain.
The endomorphism representation is therefore a special property of the canonical
solder used throughout this note.
\end{remark}

\subsection{Transported solder}
\label{subsec:Torsion_Transported}

The vectors assigned by the solder belong to different fibres and cannot be added directly. Directed transport maps permit them to be expressed in a common reference fibre.

Let
\[
f\in \mathcal{K}_2
\]
be an oriented quadrilateral and let
\[
a\prec f
\]
be a chosen reference vertex. Fix one orientation for every boundary edge
\[
e\prec f,
\]
and let \(s(e)\) denote its initial vertex. For every triple \((a,f,e)\) with \(a\prec f\) and \(e\prec f\), choose a path
\[
\gamma^a_{f,e}:s(e)\rightsquigarrow a
\]
in the \(1\)-skeleton. The choice is required to depend on the underlying unoriented \(2\)-cell, the reference vertex, and the oriented edge representative, but not on the choice between \(f\) and \(-f\). If \(s(e)=a\), the constant path is permitted and its transport is the identity. The transported solder vector
\[
P_{\gamma^a_{f,e}}\theta(s(e),e)\in F_a
\]
then represents the globally oriented edge \(e\) in the reference fibre.

\begin{figure}[h!]
\centering
\begin{tikzpicture}[scale=1.1,>=Latex]
\coordinate (A1) at (0,0);
\coordinate (A2) at (5,0);
\coordinate (A3) at (5,3);
\coordinate (A4) at (0,3);

\draw[->,thick] (A1)--(A2) node[midway,below] {\(e_1\)};
\draw[->,thick] (A2)--(A3) node[midway,right] {\(e_2\)};
\draw[->,thick] (A3)--(A4) node[midway,above] {\(e_3\)};
\draw[->,thick] (A4)--(A1) node[midway,left] {\(e_4\)};

\fill (A1) circle (2.5pt) node[below left] {\(a_1=a\)};
\fill (A2) circle (2.5pt) node[below right] {\(a_2\)};
\fill (A3) circle (2.5pt) node[above right] {\(a_3\)};
\fill (A4) circle (2.5pt) node[above left] {\(a_4\)};
\node at (2.5,1.5) {\(f\)};
\end{tikzpicture}
\caption{A compatible choice of orientations for the canonical quadrilateral.}
\label{fig:canonical-quadrilateral}
\end{figure}

\subsection{Torsion}
\label{subsec:Torsion_Definition}

\begin{definition}[Torsion]
\label{def:torsion}
Fix oriented representatives of the boundary \(1\)-cells and a path system \(\{\gamma^a_{f,e}\}\) as above. The transported-solder defect associated with the incidence pair
\[
(a,f)
\]
is
\[
\Theta(a,f)
=\sum_{e\prec f}
\varepsilon(f,e)\,
P_{\gamma^a_{f,e}}\theta(s(e),e).
\]
\end{definition}

Since every term belongs to the fibre \(F_a\),
\[
\Theta(a,f) \in F_a.
\]
Thus, torsion is naturally a vector-valued 2-cochain,
\[
\Theta \in C^2(\mathcal{K};F).
\]

\begin{proposition}[Orientation covariance]
For every incidence \(a\prec f\),
\[
\Theta(a,-f)=-\Theta(a,f).
\]
\end{proposition}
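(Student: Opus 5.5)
The plan is to expand $\Theta(a,-f)$ directly from Definition~\ref{def:torsion} and observe that the only orientation-dependent ingredient in each summand is the incidence number $\varepsilon(-f,e)$. The set of boundary $1$-cells of $-f$ coincides with that of $f$, since reversing the orientation of a $2$-cell does not change which cells are its faces; hence the sums defining $\Theta(a,f)$ and $\Theta(a,-f)$ range over the same index set $\{e\prec f\}$. The first step is therefore to record this equality of index sets.

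Next I will check that every other ingredient of a summand is unchanged under $f\mapsto -f$. The oriented representatives of the boundary edges are fixed once and for all, independently of the orientation of $f$. Consequently $s(e)$ and $\theta(s(e),e)=\theta_{\mathrm{can}}(s(e),e)=\mathbf e_{s(e),t(e)}$ are unchanged. The path $\gamma^a_{f,e}$ was explicitly required to depend only on the underlying unoriented $2$-cell, the reference vertex and the oriented edge representative, not on the choice between $f$ and $-f$. Therefore $\gamma^a_{-f,e}=\gamma^a_{f,e}$, and so $P_{\gamma^a_{-f,e}}=P_{\gamma^a_{f,e}}$ as maps $F_{s(e)}\to F_a$. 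This step is the one that genuinely uses the stated hypotheses. The main obstacle, such as it is, is to make sure the path-system convention is invoked rather than silently assumed: without it the transported solder vectors for $-f$ could differ, and the identity would fail for a non-path-independent transport.

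Finally, I will use the defining property of the boundary operator on oriented cells, $\partial(-f)=-\partial f$. This gives $\varepsilon(-f,e)=-\varepsilon(f,e)$ for every $e\prec f$. Substituting yields
\[
\Theta(a,-f)=\sum_{e\prec f}\varepsilon(-f,e)\,P_{\gamma^a_{f,e}}\theta(s(e),e)
=-\sum_{e\prec f}\varepsilon(f,e)\,P_{\gamma^a_{f,e}}\theta(s(e),e)=-\Theta(a,f),
\]
where linearity of the sum in $F_a$ is used to pull out the sign. This confirms that $\Theta$ satisfies the alternating rule required of an element of $C^2(\mathcal K;F)$.
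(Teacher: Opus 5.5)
Your proof is correct and matches the paper's: both note that the edge representatives, their source vertices, the solder values and (by the stated convention) the path system are unchanged under $f\mapsto -f$, so only $\varepsilon(-f,e)=-\varepsilon(f,e)$ changes and the sum flips sign term by term. The specialisation to $\theta_{\mathrm{can}}$ is unnecessary, since any solder value $\theta(s(e),e)$ is already independent of the orientation of $f$.
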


\begin{proof}
The edge representatives, their source vertices, and the path system are unchanged when the orientation of \(f\) is reversed, whereas
\[
\varepsilon(-f,e)=-\varepsilon(f,e).
\]
The defining sum therefore changes sign term by term.
\end{proof}

\begin{remark}[Dependence on the path system]
For a general directed transport structure, \(\Theta(a,f)\) depends on the chosen paths \(\gamma^a_{f,e}\). This dependence is part of the present operational definition. Path independence requires additional assumptions, such as an appropriate flatness condition on the region containing the competing paths.
\end{remark}

\subsection{Canonical quadrilateral}
\label{subsec:Torsion_Example}

\noindent
Let \(f\) have the boundary orientation shown in
Figure~\ref{fig:canonical-quadrilateral}, so that
\[
e_1=(a_1,a_2),\qquad
e_2=(a_2,a_3),\qquad
e_3=(a_3,a_4),\qquad
e_4=(a_4,a_1),
\]
and \(\varepsilon(f,e_i)=+1\) for \(i=1,\ldots,4\). Choose \(a_1\) as the reference vertex and the paths
\[
\gamma^{a_1}_{f,e_1}=(a_1),\qquad
\gamma^{a_1}_{f,e_2}=(a_2,a_1),\qquad
\gamma^{a_1}_{f,e_3}=(a_3,a_4,a_1),\qquad
\gamma^{a_1}_{f,e_4}=(a_4,a_1).
\]
Writing \(P_{ij}:F_{a_i}\to F_{a_j}\), the transported-solder defect is
\[
\begin{aligned}
\Theta(a_1,f)={}&
\theta(a_1,e_1)
+P_{21}\theta(a_2,e_2)\\
&+P_{41}P_{34}\theta(a_3,e_3)
+P_{41}\theta(a_4,e_4).
\end{aligned}
\]
For the canonical solder,
\[
\begin{aligned}
\theta(a_1,e_1)&=\mathbf e_{a_1,a_2},&
\theta(a_2,e_2)&=\mathbf e_{a_2,a_3},\\
\theta(a_3,e_3)&=\mathbf e_{a_3,a_4},&
\theta(a_4,e_4)&=\mathbf e_{a_4,a_1},
\end{aligned}
\]
and hence
\[
\begin{aligned}
\Theta(a_1,f)={}&
\mathbf e_{a_1,a_2}
+P_{21}\mathbf e_{a_2,a_3}\\
&+P_{41}P_{34}\mathbf e_{a_3,a_4}
+P_{41}\mathbf e_{a_4,a_1}.
\end{aligned}
\]
For arbitrary choices of the global edge orientations, the incidence factors \(\varepsilon(f,e_i)\) in Definition~\ref{def:torsion} restore the boundary signs.

\subsection{Interpretation}
\label{subsec:Torsion_Interpretation}

The transported-solder defect measures the failure of the selected and transported local edge directions to close around a \(2\)-cell. When
\[
\Theta(a,f)=0,
\]
the transported boundary directions form a closed loop in the reference fibre. When
\[
\Theta(a,f)\neq 0,
\]
their closure defect is represented by \(\Theta(a,f)\).

For the canonical solder, this quantity measures incompatibility between the incidence directions and the chosen transport structure. We use the term \emph{torsion} for this operational defect, while leaving its precise relation to a future covariant Cartan equation as an open problem.

\begin{remark}[Incidence attachment versus tensoriality]
The result is a vector attached to a single incidence,
\[
\Theta(a,f)\in F_a.
\]
This makes it a vector-valued \(2\)-cochain under the prescribed orientation and path conventions. It does not, by itself, prove the stronger tensorial or gauge-covariance properties of smooth torsion. Such properties require transformation laws for the fibre bases, solder form, and transport maps, together with a proof that the resulting defect transforms covariantly.
\end{remark}

\section{Curvature}
\label{sec:Curvature}

Whereas the torsion defect depends on both the solder form and the transport structure, holonomy depends only on transport. At finite scale, however, the complete holonomy defect and the orientation-sensitive part of holonomy are distinct quantities. This section introduces both objects and distinguishes their roles explicitly.

\subsection{Holonomy}
\label{subsec:Curvature_Holonomy}

Let \(f\in\mathcal{K}_2\) be an oriented quadrilateral and let \(a\prec f\) be a reference vertex. Starting at \(a\), traverse the oriented boundary of \(f\) once and compose the directed edge transports in traversal order.

\begin{definition}[Holonomy]
\label{def:holonomy}
The holonomy associated with the incidence pair
\[
(a,f)
\]
is the endomorphism
\[
\mathcal{R}(a,f): F_a \rightarrow F_a
\]
obtained by composing the transport maps along the oriented boundary of \(f\).
\end{definition}

For the oriented quadrilateral
\[
a_1 \xrightarrow{e_1} a_2 \xrightarrow{e_2} a_3 \xrightarrow{e_3} a_4 \xrightarrow{e_4} a_1,
\]
choosing \(a_1\) as the reference vertex gives
\[
\mathcal{R}(a_1,f) = P_{41} P_{34} P_{23} P_{12}.
\]
The reversed orientation gives
\[
\mathcal{R}(a_1,-f) = P_{21}P_{32}P_{43}P_{14}.
\]
For a general directed transport structure, these two endomorphisms are independent. If the transport is reversible along the boundary, then
\[
\mathcal{R}(a,-f)=\mathcal{R}(a,f)^{-1}.
\]

\begin{remark}[Dependence on the reference vertex]
For a general directed transport structure, the holonomy
\[
\mathcal R(a,f)
\]
depends on the chosen reference vertex \(a\prec f\).
If the transport is reversible, the holonomies associated with different
reference vertices are related by conjugation through the transport along
the boundary between the corresponding vertices. Consequently, conjugacy
invariants such as the trace are independent of the chosen reference
vertex.
\end{remark}

\subsection{Finite holonomy defect and curvature}
\label{subsec:Curvature_Definition}

\begin{definition}[Finite holonomy defect]
\label{def:finite-holonomy-defect}
The finite holonomy defect associated with \((a,f)\) is
\[
\Delta(a,f) = \mathcal{R}(a,f)-\operatorname{Id}_{F_a}.
\]
\end{definition}

The defect \(\Delta(a,f)\) measures the full discrepancy between a vector and its transport around the oriented boundary. In general,
\[
\Delta(a,-f)\ne-\Delta(a,f),
\]
even for reversible transport. It must therefore not be identified with an endomorphism-valued \(2\)-cochain under the orientation convention of Section~\ref{sec:BundleValued}.

\begin{definition}[Orientation-odd curvature]
\label{def:orientation-odd-curvature}
The orientation-odd curvature is the endomorphism
\[ \Omega(a,f) = \frac12\left( \mathcal{R}(a,f)-\mathcal{R}(a,-f) \right).
\]
\end{definition}

\begin{proposition}[Orientation covariance]\label{prop:curvature-orientation}
The orientation-odd curvature satisfies
\[
\Omega(a,-f)=-\Omega(a,f).
\]
\end{proposition}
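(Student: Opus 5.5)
The claim follows directly from Definition~\ref{def:orientation-odd-curvature}, once we check that reversing the orientation of $f$ twice returns the original traversal. The plan is first to make precise what $\mathcal R(a,-f)$ means in Definition~\ref{def:holonomy}. It is the composition of directed edge transports obtained by starting at $a$ and traversing the boundary of $f$ in the opposite cyclic order. For the canonical quadrilateral this is the formula $\mathcal R(a_1,-f)=P_{21}P_{32}P_{43}P_{14}$ recorded above.

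The key observation is that orientation reversal is an involution on oriented $2$-cells, $-(-f)=f$. Reversing the cyclic order of the boundary vertices twice restores the original cyclic order starting at $a$. Hence the traversal defining $\mathcal R(a,-(-f))$ is exactly the traversal defining $\mathcal R(a,f)$. Both use the same ordered list of directed transports, so
\[
\mathcal R(a,-(-f))=\mathcal R(a,f).
\]
This is the only point needing care. Holonomy is defined by a traversal rule rather than by a sign, so we must confirm that the rule depends only on the oriented cell and the reference vertex. We must also confirm that the traversal of $-f$ is literally the reverse vertex sequence. For a quadrilateral this can be checked by listing vertices: $(a_1,a_2,a_3,a_4,a_1)$ reverses to $(a_1,a_4,a_3,a_2,a_1)$, and reversing again gives back the original sequence.

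With this in hand, substitute $-f$ into the definition:
\[
\Omega(a,-f)=\tfrac12\bigl(\mathcal R(a,-f)-\mathcal R(a,f)\bigr)=-\tfrac12\bigl(\mathcal R(a,f)-\mathcal R(a,-f)\bigr)=-\Omega(a,f),
\]
where the subtraction is taken in the vector space $\operatorname{End}(F_a)$. No reversibility, invertibility or path-independence of $P$ is used, so the identity holds for an arbitrary directed edge-transport structure. It follows that $\Omega$ obeys the orientation rule of Section~\ref{sec:BundleValued} and defines an element of $C^2(\mathcal K;\operatorname{End}(F))$. By contrast, the finite holonomy defect $\Delta$ does not satisfy this rule in general.
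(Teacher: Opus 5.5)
Your proof is correct and follows the paper's own argument: the paper notes only that interchanging $f$ and $-f$ swaps the two terms in Definition~\ref{def:orientation-odd-curvature}. Your check that $\mathcal R(a,-(-f))=\mathcal R(a,f)$ simply spells out the involution that this one-line proof takes for granted.
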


\begin{proof}
Interchanging \(f\) and \(-f\) reverses the two terms in Definition~\ref{def:orientation-odd-curvature}.
\end{proof}

Consequently,
\[
\Omega\in C^2(\mathcal{K};\operatorname{End}(F)).
\]
For reversible transport,
\[ \Omega(a,f) = \frac12\left( \mathcal{R}(a,f)-\mathcal{R}(a,f)^{-1} \right).
\]
If \(\mathcal{R}(a,f)\) is close to the identity, then \(\Omega(a,f)\) and \(\Delta(a,f)\) agree to first order, but they are distinct finite-scale quantities.

\subsection{Change of fibre bases and scalar invariants}

Let \(G_a:F_a\to F_a\) be an invertible change of basis at each vertex. The corresponding transformed edge maps are
\[
P^{G}_{ab}=G_bP_{ab}G_a^{-1}.
\]

\begin{proposition}[Gauge covariance of holonomy and curvature] \label{prop:curvature-gauge}
Under these changes of fibre bases,
\[
\mathcal{R}^{G}(a,f) = G_a\mathcal{R}(a,f)G_a^{-1},
\]
\[
\Delta^{G}(a,f) = G_a\Delta(a,f)G_a^{-1},
\]
\[
\Omega^{G}(a,f) = G_a\Omega(a,f)G_a^{-1}.
\]
\end{proposition}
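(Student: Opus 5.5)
The plan is to verify the three identities in sequence, each by direct substitution. Everything reduces to a telescoping of the gauge factors along the closed boundary loop of \(f\).

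First, take the holonomy. By Definition~\ref{def:holonomy}, \(\mathcal{R}(a,f)\) is the ordered composition of the directed edge transports along the oriented boundary of \(f\), starting and ending at \(a\). For the quadrilateral this is \(P_{41}P_{34}P_{23}P_{12}\) with \(a=a_1\). The general quadrilateral is the same computation after relabelling the boundary vertices so that the reference vertex is \(a_1\).

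Substituting \(P^{G}_{ij}=G_{a_j}P_{ij}G_{a_i}^{-1}\) gives
\[
\mathcal{R}^{G}(a_1,f)=G_{a_1}P_{41}G_{a_4}^{-1}\,G_{a_4}P_{34}G_{a_3}^{-1}\,G_{a_3}P_{23}G_{a_2}^{-1}\,G_{a_2}P_{12}G_{a_1}^{-1}.
\]
Every interior factor \(G_{a_i}^{-1}G_{a_i}\) cancels. The only factors that survive are \(G_{a_1}\) on the left and \(G_{a_1}^{-1}\) on the right, because the loop begins and ends at the same vertex. I would state this more generally as a lemma: for any path \(\gamma\) from \(v_0\) to \(v_k\), one has \(P^{G}_\gamma=G_{v_k}P_\gamma G_{v_0}^{-1}\). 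This follows by induction on the path length from the path-transport definition. The first identity is then the special case of a closed path.

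The key observation here is that no invertibility or reversibility of the \(P_{ab}\) is used. Only the invertibility of the \(G_a\) enters. The statement therefore holds for arbitrary directed transport structures.

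For the defect, I would use
\[
\Delta^{G}(a,f)=\mathcal{R}^{G}(a,f)-\operatorname{Id}_{F_a}=G_a\mathcal{R}(a,f)G_a^{-1}-G_a\operatorname{Id}_{F_a}G_a^{-1}=G_a\Delta(a,f)G_a^{-1}.
\]

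For the curvature, apply the path lemma to the reversed boundary loop. That loop also begins and ends at \(a\), so \(\mathcal{R}^{G}(a,-f)=G_a\mathcal{R}(a,-f)G_a^{-1}\). Since conjugation by \(G_a\) is linear on \(\operatorname{End}(F_a)\), Definition~\ref{def:orientation-odd-curvature} gives the third identity directly.

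The only step needing any care is the bookkeeping in the path lemma. Specifically, one must check that the transformed map \(P^{G}_{ab}\) uses \(G_b\) on the target side and \(G_a^{-1}\) on the source side, so that consecutive factors actually cancel in the composition order fixed by the path-transport definition. Once that is confirmed, the rest is immediate.

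As a corollary, I would note that the trace and other conjugacy invariants of \(\mathcal{R}\), \(\Delta\) and \(\Omega\) are gauge invariant.
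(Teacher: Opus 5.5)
Your proposal is correct and follows the paper's proof: the factors \(G_v^{-1}G_v\) cancel at every intermediate vertex of the closed boundary loop, leaving only conjugation by \(G_a\), and the identities for \(\Delta^G\) and \(\Omega^G\) then follow from their definitions and the linearity of conjugation. Your path lemma \(P^G_\gamma=G_{v_k}P_\gamma G_{v_0}^{-1}\) is the same fact the paper uses in the proof of Proposition~\ref{prop:covariant-gauge}, and you rightly note that invertibility of the transports themselves is never used.
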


\begin{proof}
In the composition around a closed boundary, the factors \(G_v^{-1}G_v\) cancel at every intermediate vertex. Only the factors at the reference vertex remain. The formulas for \(\Delta^G\) and \(\Omega^G\) follow from their definitions.
\end{proof}

It follows that conjugacy invariants of \(\mathcal{R}\), \(\Delta\), and \(\Omega\) are independent of the chosen fibre bases. In particular,
\[
(a,f) \longmapsto \operatorname{tr}\left(\Omega(a,f)\right),
\]
is a metric-independent, orientation-odd scalar incidence function. It should not be confused with an ordinary scalar cochain in \(C^2(\mathcal{K})\), which assigns only one value to each \(2\)-cell rather than one value to every vertex--cell incidence. The trace of \(\Delta(a,f)\) is also basis independent, but it is not generally alternating in \(f\).

\subsection{Flat transport}\label{subsec:Curvature_Flat}

\begin{definition}[Two-sided face flatness]
The directed transport structure is flat on the incidence \((a,f)\) if
\[
\mathcal{R}(a,f) = \operatorname{Id}_{F_a},
\qquad
\mathcal{R}(a,-f)=\operatorname{Id}_{F_a}.
\]
\end{definition}

Equivalently,
\[
\Delta(a,f)=\Delta(a,-f)=0.
\]
Two-sided face flatness implies \(\Omega(a,f)=0\), but the converse is not true: \(\Omega(a,f)=0\) only states that the two oriented holonomies agree. In the reversible case, it states that
\[
\mathcal{R}(a,f)=\mathcal{R}(a,f)^{-1},
\]
which also permits non-identity involutions.

\begin{remark}[Local flatness and local systems]
For a reversible constant-rank transport structure, identity holonomy on every contractible loop is the condition expected of flat local transport. Identity holonomy around the boundary of each \(2\)-cell is sufficient for path independence only under additional topological hypotheses, for example when every loop under consideration is generated by such cell boundaries. Non-contractible loops may retain non-trivial monodromy. Consequently, even face flatness does not by itself imply globally path-independent transport.
\end{remark}

\subsection{Interpretation}
\label{subsec:Curvature_Interpretation}

Holonomy compares a vector with the result obtained after transporting it around a closed boundary. The complete finite discrepancy is represented by
\[
\Delta(a,f) = \mathcal{R}(a,f)-\operatorname{Id}_{F_a}.
\]
It vanishes precisely when transport around that oriented boundary leaves every vector unchanged. By contrast,
\[
\Omega(a,f) = \frac12\left(\mathcal{R}(a,f)-\mathcal{R}(a,-f)\right)
\]
extracts the part that changes sign with the orientation and therefore has the algebraic type of an endomorphism-valued \(2\)-cochain.

Thus the three operational quantities capture different aspects of the
geometry:
\begin{itemize}
\item \(\Theta\) measures transported closure defects of soldered local
directions and depends on the solder form, the path system, and the
transport maps;
\item \(\Delta\) measures the complete finite failure of transport to return
vectors unchanged after a circuit of the boundary;
\item \(\Omega\) measures the orientation-sensitive part of the holonomy and
therefore has the algebraic type of an endomorphism-valued \(2\)-cochain.
\end{itemize}

\begin{remark}[Curvature as localised transport information]
Although \(\Delta(a,f)\) and \(\Omega(a,f)\) are attached to a single incidence, both depend on transport around the entire boundary of \(f\). Their incidence attachment should not be confused with dependence on data at the reference vertex alone. The covariance statement in Proposition \ref{prop:curvature-gauge} gives the precise sense in which they are local fibre endomorphisms.
\end{remark}

\begin{remark}[Terminology]
In the remainder of this note, \emph{finite holonomy defect} refers to
\[
\Delta(a,f)=\mathcal{R}(a,f)-\operatorname{Id}_{F_a},
\]
whereas \emph{curvature cochain} refers to the alternating quantity \(\Omega\). Statements concerning exact return of transported vectors must be formulated using \(\Delta\), not merely \(\Omega\).
\end{remark}

\section{Weighted Covariant Incidence Operators}
\label{sec:Covariant}

The ordinary coboundary operator acts on scalar-valued cochains because all coefficients belong to the same vector space and each boundary contribution can be summed directly. For incidence-based fibre coefficients, contributions associated with different vertices lie in different fibres and must first be transported to a common reference fibre. Moreover, a single boundary cell carries one coefficient at each of its incident vertices, so an additional rule is needed to avoid unintended multiple counting. The construction below therefore requires transport paths, anchor sets, and normalising weights.

\subsection{Weighted covariant incidence operator}
\label{subsec:Covariant_Operator}

\paragraph{Anchoring data.}
For a cell \(\rho\), write
\[
\operatorname{Vert}(\rho) = \{\,b\in\mathcal{K}_0:b\prec\rho\,\}.
\]
Fix a degree \(p\). For every incidence
\[
(a,\tau)\in\mathcal{K}_{0,p+1},
\]
choose the following data:
\begin{enumerate}
\item an anchor set
\[
S_p(a,\tau)\subseteq\operatorname{Vert}(\tau), \qquad a\in S_p(a,\tau),
\]
such that
\[
S_p(a,\tau)\cap\operatorname{Vert}(\sigma)\ne\varnothing
\]
for every \(p\)-cell \(\sigma\prec\tau\);
\item for each \(b\in S_p(a,\tau)\), a path
\[
\gamma^a_{\tau,b}:b\rightsquigarrow a
\]
in the \(1\)-skeleton, with the constant path used when \(b=a\);
\item weights
\[
w^p_{a,\tau}(b,\sigma)\in\mathbb{R},
\qquad
b\in S_p(a,\tau)\cap\operatorname{Vert}(\sigma),
\]
satisfying the partition condition
\[
\sum_{b\in S_p(a,\tau)\cap\operatorname{Vert}(\sigma)} w^p_{a,\tau}(b,\sigma)=1
\]
for every \(\sigma\prec\tau\).
\end{enumerate}
The anchor sets, paths, and weights are required to be independent of the orientation of \(\tau\):
\[
S_p(a,-\tau)=S_p(a,\tau),\qquad\gamma^a_{-\tau,b}=\gamma^a_{\tau,b},\qquad w^p_{a,-\tau}(b,\sigma)=w^p_{a,\tau}(b,\sigma).
\]
We denote the complete collection of this degree-\(p\) data by
\[
\mathfrak{A}_p=(S_p,\Gamma_p,w^p).
\]

\paragraph{Representative choices.}
The full-vertex choice
\[
S_p(a,\tau)=\operatorname{Vert}(\tau)
\]
is always admissible. A canonical normalisation for any admissible anchor set is
\[
w^p_{a,\tau}(b,\sigma) = \frac{1}{ \#\bigl(S_p(a,\tau)\cap\operatorname{Vert}(\sigma)\bigr)}.
\]
Smaller anchor sets and non-uniform weights define different locality prescriptions. They are part of the operator and must therefore be reported whenever \(\delta^\nabla\) is used.

\begin{definition}[Weighted covariant incidence operator]
\label{def:covariant-incidence-operator}
Let \(X\in C^p(\mathcal{K};F)\). Relative to anchoring data \(\mathfrak{A}_p\), define
\[
\begin{aligned}
(\delta^\nabla_{\mathfrak{A}_p}X)(a,\tau) ={}&
\sum_{\sigma\prec\tau}\varepsilon(\tau,\sigma)
\sum_{\substack{b\in S_p(a,\tau)\\ b\prec\sigma}}
w^p_{a,\tau}(b,\sigma)\,
P_{\gamma^a_{\tau,b}}X(b,\sigma).
\end{aligned}
\]
\end{definition}

Every summand belongs to \(F_a\), so the expression is well defined. The notation \(\delta^\nabla\) may be used when \(\mathfrak{A}_p\) has been fixed and no ambiguity can arise.

\begin{proposition}[Linearity and orientation covariance]
\label{prop:covariant-linearity-orientation}
For fixed \(P\) and \(\mathfrak{A}_p\), the assignment in Definition~\ref{def:covariant-incidence-operator} is a linear map
\[
\delta^\nabla_{\mathfrak{A}_p}: C^p(\mathcal{K};F) \longrightarrow C^{p+1}(\mathcal{K};F).
\]
In particular,
\[
(\delta^\nabla_{\mathfrak{A}_p}X)(a,-\tau) = -(\delta^\nabla_{\mathfrak{A}_p}X)(a,\tau).
\]
\end{proposition}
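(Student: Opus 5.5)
The plan is to check the three properties directly from Definition~\ref{def:covariant-incidence-operator}: well-definedness, linearity, and the alternation needed for membership in \(C^{p+1}(\mathcal{K};F)\).

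\emph{Well-definedness.} For a fixed incidence \((a,\tau)\), every summand \(P_{\gamma^a_{\tau,b}}X(b,\sigma)\) lies in \(F_a\), because \(X(b,\sigma)\in F_b\) and the path transport sends \(F_b\) to \(F_a\). The sum is finite, so \((\delta^\nabla_{\mathfrak{A}_p}X)(a,\tau)\in F_a\). The assignment is therefore an incidence function on \(\mathcal{K}_{0,p+1}\) with values in the correct fibres.

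\emph{Linearity.} For \(X,Y\in C^p(\mathcal{K};F)\) and \(\lambda\in\mathbb{R}\), pointwise addition and scaling hold by Section~\ref{subsec:BundleValued_Algebra}. Each path transport \(P_{\gamma}\) is a composition of linear maps, hence linear. The weights \(w^p_{a,\tau}(b,\sigma)\) and incidence numbers \(\varepsilon(\tau,\sigma)\) are scalars independent of \(X\). So the whole expression is a finite linear combination of linear maps applied to \(X\), and linearity in \(X\) follows term by term.

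\emph{Alternation.} Replacing \(\tau\) by \(-\tau\) leaves the set of faces \(\sigma\prec\tau\) unchanged as unoriented cells, but each incidence number changes sign: \(\varepsilon(-\tau,\sigma)=-\varepsilon(\tau,\sigma)\), since \(\partial(-\tau)=-\partial\tau\). By the orientation-independence requirements on \(\mathfrak{A}_p\), the anchor sets \(S_p(a,\tau)\), paths \(\gamma^a_{\tau,b}\) and weights \(w^p_{a,\tau}(b,\sigma)\) are identical for \(\tau\) and \(-\tau\). The argument of \(X\) is unchanged, so every summand changes sign, giving \((\delta^\nabla X)(a,-\tau)=-(\delta^\nabla X)(a,\tau)\). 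This mirrors the proof of the torsion orientation covariance.

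\emph{Expected obstacle.} The only delicate point is a bookkeeping one: the sum over \(\sigma\prec\tau\) must be insensitive to the orientation representative chosen for each face \(\sigma\). Replacing \(\sigma\) by \(-\sigma\) flips \(\varepsilon(\tau,\sigma)\) and, by the alternating property of \(X\in C^p(\mathcal{K};F)\), also flips \(X(b,\sigma)\). The product is therefore unchanged. This relies on the weights \(w^p_{a,\tau}(b,\sigma)\) depending only on the unoriented face, which is how I would read the definition and will state explicitly. For \(p=0\) the faces are vertices with no orientation, so this issue does not arise.
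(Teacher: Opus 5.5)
Your proposal is correct and follows essentially the same argument as the paper. Linearity comes from the linearity of the path transports and the finite sums, and the sign change comes from \(\varepsilon(-\tau,\sigma)=-\varepsilon(\tau,\sigma)\) together with the orientation-independence of the anchoring data \(\mathfrak{A}_p\); your extra check that the sum does not depend on the orientation representatives chosen for the faces \(\sigma\), read with weights depending only on the unoriented face, is a sound well-definedness remark that the paper leaves implicit.
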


\begin{proof}
Linearity follows from the linearity of the transport maps and the finite sums. Under \(\tau\mapsto-\tau\), the anchoring data remain unchanged while
\[
\varepsilon(-\tau,\sigma)=-\varepsilon(\tau,\sigma).
\]
The defining expression therefore changes sign.
\end{proof}

\begin{proposition}[Covariance under fibre-basis changes]
\label{prop:covariant-gauge}
Let \(G_a:F_a\to F_a\) be invertible and set
\[
X^G(a,\sigma)=G_aX(a,\sigma),
\qquad
P^G_{ab}=G_bP_{ab}G_a^{-1}.
\]
Then
\[
\bigl(\delta^{\nabla,G}_{\mathfrak{A}_p}X^G\bigr)(a,\tau)
=G_a
\bigl(\delta^\nabla_{\mathfrak{A}_p}X\bigr)(a,\tau).
\]
\end{proposition}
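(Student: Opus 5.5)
The proof is a direct substitution into Definition~\ref{def:covariant-incidence-operator}, combined with a telescoping identity for path transports. The first step is to record how path transport transforms. For a path \(\gamma=(v_0,v_1,\ldots,v_k)\), the transformed path transport is the ordered composition of the \(P^G_{v_{i}v_{i+1}}=G_{v_{i+1}}P_{v_iv_{i+1}}G_{v_i}^{-1}\). In this composition every interior factor \(G_{v_i}^{-1}G_{v_i}\) cancels, exactly as in the proof of Proposition~\ref{prop:curvature-gauge}. This leaves
\[
P^G_\gamma=G_{v_k}P_\gamma G_{v_0}^{-1}.
\]
For the constant path (\(k=0\)) both sides equal \(\operatorname{Id}\), so the identity holds in that case too. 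I would state this as a short lemma, proved by induction on \(k\).

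Next I would fix an incidence \((a,\tau)\) and expand \(\bigl(\delta^{\nabla,G}_{\mathfrak A_p}X^G\bigr)(a,\tau)\) using the same anchoring data \(\mathfrak A_p\). This is legitimate because the anchor sets, paths and weights are purely combinatorial and do not depend on \(P\) or on the fibre bases. A typical summand is
\[
\varepsilon(\tau,\sigma)\,w^p_{a,\tau}(b,\sigma)\,P^G_{\gamma^a_{\tau,b}}X^G(b,\sigma).
\]
Since \(\gamma^a_{\tau,b}\) runs from \(b\) to \(a\), the lemma gives
\[
P^G_{\gamma^a_{\tau,b}}G_bX(b,\sigma)=G_aP_{\gamma^a_{\tau,b}}G_b^{-1}G_bX(b,\sigma)=G_aP_{\gamma^a_{\tau,b}}X(b,\sigma).
\]
The signs and weights are scalars, and \(G_a\) is linear and does not depend on \(b\) or \(\sigma\). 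So \(G_a\) can be pulled out of the double sum, which yields \(G_a\bigl(\delta^\nabla_{\mathfrak A_p}X\bigr)(a,\tau)\).

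There is no real obstacle. The only points needing care are bookkeeping. First, the path orientation must be used correctly, from \(b\) to \(a\), so that the surviving factors are \(G_a\) on the left and \(G_b^{-1}\) on the right. Second, the proof must note that \(X^G\) is again an alternating cochain, since \(G_a\) is linear and independent of \(\sigma\), so that both sides lie in \(C^{p+1}(\mathcal K;F)\) by Proposition~\ref{prop:covariant-linearity-orientation}. Third, the \(G_a\) are invertible maps \(F_a\to F_a\) of equal dimension, so the transformation \(P^G_{ab}\) makes sense even when \(P\) is non-invertible or \(\dim F_a\ne\dim F_b\). Invertibility of the transports themselves is never used.
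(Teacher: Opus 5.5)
Your proposal is correct and follows the paper's own argument: path transport transforms as $P^G_\gamma=G_aP_\gamma G_b^{-1}$ by cancellation at the interior vertices. Substituting this into Definition~\ref{def:covariant-incidence-operator} cancels $G_b^{-1}G_b$ in every summand and leaves the common factor $G_a$. Your induction lemma and the bookkeeping remarks (constant path, $X^G$ alternating, no invertibility of $P$ needed) just spell out what the paper's two-line proof leaves implicit.
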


\begin{proof}
For every anchor path from \(b\) to \(a\),
\[
P^G_{\gamma^a_{\tau,b}}
=G_aP_{\gamma^a_{\tau,b}}G_b^{-1}.
\]
Substitution into Definition~\ref{def:covariant-incidence-operator} cancels \(G_b^{-1}G_b\) in every summand and leaves the common factor \(G_a\).
\end{proof}

\begin{remark}[The case \(p=0\)]
Let
\[
X\in C^0(\mathcal{K};F)
\]
and let
\[
e=(a,b)\in \mathcal{K}_1.
\]
Choose \(S_0(a,e)=\{a,b\}\), the constant path at \(a\), the edge path from \(b\) to \(a\), and the necessarily unit weights on the two \(0\)-faces. Since
\[
\varepsilon(e,a)=-1,\qquad
\varepsilon(e,b)=+1,
\]
the operator reduces to
\[
(\delta^\nabla_{\mathfrak{A}_0}X)(a,e)
=P_{ba}X(b)-X(a).
\]
Thus, the elementary transported difference introduced in Section~\ref{subsec:Connections_Differences} is recovered as the lowest-order instance of the covariant coboundary.
\end{remark}

\paragraph{Extension to other coefficient families.}

For a path \(\gamma:b\rightsquigarrow a\), the vector transport
\[
P_\gamma:F_b\longrightarrow F_a
\]
does not canonically transport covectors from \(F_b^*\) to \(F_a^*\). The canonical dual map is the pullback
\[
P_\gamma^*:F_a^*\longrightarrow F_b^*,
\]
which has the opposite direction on dual fibres. Therefore, to transport a covector from \(F_b^*\) to \(F_a^*\), one needs the pullback associated with a reverse directed path
\[
\bar\gamma:a\rightsquigarrow b.
\]
When such a reverse path transport is available, the required map is
\[
P_{\bar\gamma}^*:F_b^*\longrightarrow F_a^*.
\]

Thus, for
\[
\eta\in C^p(\mathcal{K};F^*),
\]
and for anchoring data whose reverse paths are admissible for covector transport, define
\[
\begin{aligned}
(\delta^{\nabla,*}_{\mathfrak{A}_p}\eta)(a,\tau) ={}&
\sum_{\sigma\prec\tau}\varepsilon(\tau,\sigma)
\sum_{\substack{b\in S_p(a,\tau)\\ b\prec\sigma}}
w^p_{a,\tau}(b,\sigma)\,
P_{\overline{\gamma^a_{\tau,b}}}^{*}\eta(b,\sigma).
\end{aligned}
\]
This gives a linear map
\[
\delta^{\nabla,*}_{\mathfrak{A}_p}:
C^p(\mathcal{K};F^*)
\longrightarrow
C^{p+1}(\mathcal{K};F^*).
\]
For reversible transport,
\[
P_{\overline{\gamma}}=P_\gamma^{-1},
\qquad
P_{\overline{\gamma}}^*=(P_\gamma^{-1})^*.
\]

For a non-reversible directed transport structure, the covector operator is therefore not determined by the vector operator alone; it additionally requires admissible reverse path transports or an independently specified dual-transport rule.

The extension of the covariant coboundary to endomorphism-valued cochains,
\[
C^p(\mathcal{K};\operatorname{End}(F)),
\]
requires a natural notion of transport for fibre endomorphisms. When all transport maps are invertible, this is provided by conjugation,
\[
A \longmapsto P_\gamma\circ A\circ P_\gamma^{-1}.
\]
More general constructions for non-invertible transports are left for future investigation.

\begin{remark}[Operator rather than cohomological coboundary]
The name ``covariant coboundary'' describes the incidence and degree-raising form of the construction. It does not assert
\[
\delta^\nabla_{\mathfrak{A}_{p+1}}
\delta^\nabla_{\mathfrak{A}_p}=0.
\]
Without such a nilpotency result, the sequence is not a cochain complex and \(\delta^\nabla\) is more precisely a weighted covariant incidence operator.
\end{remark}

\begin{remark}[Comparison with sheaf coboundaries]
In cellular sheaf cohomology \cite{Shepard1985cellular,Curry2014sheaves}, the differential
\[
d_\mathcal{F}:C^p(\mathcal{K};\mathcal{F})
\longrightarrow C^{p+1}(\mathcal{K};\mathcal{F})
\]
is assembled from compatible restriction maps belonging to a cellular sheaf. The operator \(\delta^\nabla_{\mathfrak A_p}\) defined here is analogous in that it compares coefficients through linear maps before summation. It is not a sheaf coboundary: it acts on vertex--cell incidence data and depends on prescribed anchors, weights, and paths that need not satisfy the functorial axioms of a cellular sheaf.
\end{remark}

\subsection{Status of an exterior covariant derivative}
\label{subsec:Covariant_Exterior}

The operator \(\delta^\nabla_{\mathfrak{A}_p}\) is presently defined only on the incidence-based spaces \(C^p(\mathcal{K};F)\). A space
\[
\Omega^p(\mathcal{M};F)
\]
of fibre-valued Forman differential forms, together with a correspondence to these incidence cochains, has not been constructed in this note. It would therefore be premature to define an operator
\[
d^\nabla:\Omega^p(\mathcal{M};F)
\longrightarrow\Omega^{p+1}(\mathcal{M};F)
\]
by transport of structure.

\begin{definition}[Incidence exterior covariant operator]
For fixed anchoring data \(\mathfrak{A}_p\), the degree-raising map
\[
\delta^\nabla_{\mathfrak{A}_p}:
C^p(\mathcal{K};F)
\longrightarrow C^{p+1}(\mathcal{K};F)
\]
is called the incidence exterior covariant operator.
\end{definition}

The notation \(d^\nabla\) is reserved for a future operator on a rigorously defined space of fibre-valued combinatorial differential forms. Establishing the required correspondence with incidence cochains is one of the open problems of the framework.

\subsection{Relation to the scalar theory}
\label{subsec:Covariant_Scalar}

It is useful to distinguish the present construction from a coefficient-wise vector-valued version of scalar CMC. The incidence-based framework developed here is not obtained by replacing the scalar coefficients of ordinary cochains by vectors or covectors. Instead, it introduces a different class of algebraic objects.

In scalar CMC, a \(p\)-cochain assigns a single coefficient to every \(p\)-cell,
\[
\alpha(\sigma), \qquad \sigma\in\mathcal{K}_p.
\]
By contrast, an fibre-valued incidence \(p\)-cochain assigns one coefficient to every incident pair
\[
(a,\sigma), \qquad a\prec\sigma,
\]
so that the coefficient space depends simultaneously on the supporting cell and on the boundary vertex at which it is evaluated. Consequently, scalar-valued and fibre-valued incidence cochains are defined on different underlying cochain spaces.

The covariant incidence operator is therefore not obtained simply by replacing the scalar coefficients of the ordinary coboundary with vectors. It acts on a larger incidence-based space and includes explicit aggregation weights and transport paths.

No fibre-valued Forman differential-form space or direct extension of Forman's scalar correspondence is asserted here. The present construction remains on the incidence cochains of the computational complex.

There is nevertheless a useful consistency test. Define the vertex-independent lift of a scalar \(p\)-cochain \(\alpha\in C^p(\mathcal{K})\) by
\[
(\iota\alpha)(b,\sigma)=\alpha(\sigma)
\]
for every \(b\prec\sigma\), taking every fibre to be \(\mathbb{R}\).

\begin{proposition}[Recovery of the scalar coboundary]\label{prop:scalar-recovery}
Suppose \(F_a=\mathbb{R}\) for every vertex, all path transports are the identity, and the weights satisfy the partition condition in the definition of \(\mathfrak{A}_p\). Then
\[
(\delta^\nabla_{\mathfrak{A}_p}\iota\alpha)(a,\tau) = (\delta\alpha)(\tau)
\]
for every \(a\prec\tau\).
\end{proposition}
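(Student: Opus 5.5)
The plan is to substitute the lift $\iota\alpha$ directly into Definition~\ref{def:covariant-incidence-operator} and collapse the inner sum with the partition condition. Under the hypotheses every fibre is $\mathbb{R}$ and every path transport $P_{\gamma^a_{\tau,b}}$ is the identity on $\mathbb{R}$. The summand $w^p_{a,\tau}(b,\sigma)\,P_{\gamma^a_{\tau,b}}(\iota\alpha)(b,\sigma)$ therefore becomes $w^p_{a,\tau}(b,\sigma)\,\alpha(\sigma)$. Since $(\iota\alpha)(b,\sigma)=\alpha(\sigma)$ does not depend on the anchor vertex $b$, the factor $\alpha(\sigma)$ can be taken out of the inner sum over $b\in S_p(a,\tau)\cap\operatorname{Vert}(\sigma)$.

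In the first step I will write
\[
(\delta^\nabla_{\mathfrak{A}_p}\iota\alpha)(a,\tau)
=\sum_{\sigma\prec\tau}\varepsilon(\tau,\sigma)\,\alpha(\sigma)
\sum_{b\in S_p(a,\tau)\cap\operatorname{Vert}(\sigma)} w^p_{a,\tau}(b,\sigma).
\]
Admissibility of the anchor set, namely $S_p(a,\tau)\cap\operatorname{Vert}(\sigma)\neq\varnothing$, guarantees that the inner sum has at least one term. The partition condition then says that this inner sum equals $1$ for every $p$-face $\sigma\prec\tau$. This leaves $\sum_{\sigma\prec\tau}\varepsilon(\tau,\sigma)\alpha(\sigma)$.

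In the second step I will identify this expression with $(\delta\alpha)(\tau)$. Since $\delta$ is the transpose of $\partial$ and $\partial\tau=\sum_{\sigma\prec\tau,\ \dim\sigma=p}\varepsilon(\tau,\sigma)\sigma$, we get $(\delta\alpha)(\tau)=\alpha(\partial\tau)=\sum_\sigma \varepsilon(\tau,\sigma)\alpha(\sigma)$.

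The only point needing care is bookkeeping. The outer sum in Definition~\ref{def:covariant-incidence-operator} runs over the $p$-faces of $\tau$, each taken with a fixed orientation representative, and $\varepsilon(\tau,\sigma)\alpha(\sigma)$ does not change if that representative is flipped, because both factors change sign. So the result does not depend on these choices. The identity holds for every $a\prec\tau$ because $a$ enters only through the anchor data, and that data has already been eliminated by the partition condition.
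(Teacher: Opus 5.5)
Your proposal is correct and matches the paper's proof: substitute the lift $\iota\alpha$, pull $\alpha(\sigma)$ out of the inner anchor sum, collapse that sum to $1$ by the partition condition, and recognise $\sum_{\sigma\prec\tau}\varepsilon(\tau,\sigma)\alpha(\sigma)$ as $(\delta\alpha)(\tau)$. Your extra remarks on orientation representatives and on independence from $a$ are harmless bookkeeping that the paper leaves implicit.
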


\begin{proof}
Substitution gives
\[
\begin{aligned}
(\delta^\nabla_{\mathfrak{A}_p}\iota\alpha)(a,\tau) &= \sum_{\sigma\prec\tau}\varepsilon(\tau,\sigma)
\alpha(\sigma) \sum_{\substack{b\in S_p(a,\tau)\\b\prec\sigma}} w^p_{a,\tau}(b,\sigma)\\
&=
\sum_{\sigma\prec\tau}
\varepsilon(\tau,\sigma)\alpha(\sigma)
=(\delta\alpha)(\tau).
\end{aligned}
\]
\end{proof}

Thus the normalisation of the anchor weights is essential: without it, the scalar boundary contributions are generally counted more than once.

\begin{remark}[Non-nilpotency and Curvature Obstruction]
Unlike the scalar coboundary, the family of operators
\[
\delta^\nabla_{\mathfrak{A}_{p+1}}
\delta^\nabla_{\mathfrak{A}_p}
\]
is not asserted to vanish. Its composition depends on the anchoring data in two successive degrees as well as on the transport maps. Finite holonomy data should enter any useful decomposition of this composition, but no such identity is proved here. In particular, the smooth formula \(d^\nabla d^\nabla X=\Omega\wedge X\) cannot be transferred merely by analogy: an action product between endomorphism-valued and vector-valued incidence cochains must first be defined, and the weights and path conventions must be shown to be compatible.
\end{remark}

\subsection{Towards Cartan structure equations}
\label{subsec:Covariant_Cartan}

The torsion and curvature introduced in Sections~\ref{sec:Torsion} and~\ref{sec:Curvature} were defined operationally through transported solder forms and holonomy. The weighted incidence operator provides a partial connection between the transported-solder construction and a degree-raising operator.

\begin{remark}[Operations before equations]
The logical order adopted here differs from that of the smooth theory. Torsion and curvature are introduced first through finite transport operations on an explicit cell complex. Differential expressions are sought only afterwards. This reflects the primary role played by transport maps and transported comparisons in the combinatorial setting.
\end{remark}

\begin{proposition}[Recovery of the transported-solder defect]
\label{prop:torsion-recovery}
Let \(p=1\) and choose \(S_1(a,f)\) to contain the source \(s(e)\) of every boundary edge, for example
\[
S_1(a,f)=\operatorname{Vert}(f).
\]
For each \(e\prec f\), choose the anchor weights
\[
w^1_{a,f}(b,e) = \begin{cases}
1,&b=s(e),\\
0,&b\ne s(e),
\end{cases}
\]
and identify the incidence-operator paths with the torsion paths by
\[
\gamma^a_{f,s(e)}=\gamma^a_{f,e}.
\]
Then
\[
(\delta^\nabla_{\mathfrak{A}_1}\theta)(a,f) = \Theta(a,f).
\]
\end{proposition}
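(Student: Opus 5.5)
The proof is a direct substitution into Definition~\ref{def:covariant-incidence-operator} with $p=1$, $\tau=f$ and $X=\theta$, followed by a collapse of the inner sums using the Kronecker-type weights. First we check that the prescribed data form admissible anchoring data $\mathfrak{A}_1$. Since $S_1(a,f)$ contains $s(e)$ for every boundary edge $e\prec f$, we have $S_1(a,f)\cap\operatorname{Vert}(e)\ne\varnothing$. The weights sum to one over $S_1(a,f)\cap\operatorname{Vert}(e)$ because exactly one vertex, $s(e)$, carries weight $1$. We also have to confirm that $a\in S_1(a,f)$; this is automatic for $S_1(a,f)=\operatorname{Vert}(f)$, and in general we would simply adjoin $a$, which costs nothing since weights at extra vertices may be set to $0$. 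Orientation independence of $S_1$, of the paths and of the weights under $f\mapsto -f$ is inherited from the requirement, stated before Definition~\ref{def:torsion}, that the torsion path system does not depend on the choice between $f$ and $-f$.

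Next we expand
\[
(\delta^\nabla_{\mathfrak{A}_1}\theta)(a,f)=\sum_{e\prec f}\varepsilon(f,e)\sum_{\substack{b\in S_1(a,f)\\ b\prec e}} w^1_{a,f}(b,e)\,P_{\gamma^a_{f,b}}\theta(b,e).
\]
For each $e$ the inner sum runs over at most the two endpoints of $e$. Only $b=s(e)$ has nonzero weight, so the inner sum reduces to $P_{\gamma^a_{f,s(e)}}\theta(s(e),e)$. Replacing the path by $\gamma^a_{f,e}$ through the stated identification gives exactly the summand of $\Theta(a,f)$ in Definition~\ref{def:torsion}, and summing over $e$ finishes the argument. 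Both sides use the same fixed oriented representatives of the boundary edges: $s(e)$ and $\theta(s(e),e)$ are evaluated for that representative, and $\varepsilon(f,e)$ is taken relative to it in both formulas.

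The main obstacle is the path identification $\gamma^a_{f,s(e)}=\gamma^a_{f,e}$. In $\mathfrak{A}_1$ the paths are indexed by anchor vertices $b$, whereas the torsion paths are indexed by edges. Two boundary edges of a quadrilateral could share a source vertex under the fixed orientations, for instance when $s(e_1)=s(e_4)$, and they might then have been assigned different torsion paths. I would handle this in one of two ways. The first is to note that the identification is an explicit hypothesis of the proposition, so it implicitly requires consistency; in the compatible orientation of Figure~\ref{fig:canonical-quadrilateral} the sources are distinct, so no conflict arises. The second is to observe that the computation above uses the path only inside the single nonzero term for each $e$, so the argument still holds if the anchoring paths are allowed to depend on the pair $(b,e)$. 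I would state the first option as the interpretation adopted and mention the second in a remark.
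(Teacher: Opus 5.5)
Your proof is correct and follows the paper's argument: substitute $\theta$ into the weighted covariant incidence operator, let the weights collapse each inner sum to the single term anchored at $s(e)$, and use the path identification to match the summands of $\Theta(a,f)$ term by term. Your two additional observations are valid refinements that the paper's proof leaves implicit. First, $a$ may need to be adjoined to $S_1(a,f)$ with zero weight. Second, the identification $\gamma^a_{f,s(e)}=\gamma^a_{f,e}$ is only well defined when boundary edges sharing a source vertex are assigned the same torsion path.
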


\begin{proof}
For each boundary edge, the inner weighted sum in Definition~\ref{def:covariant-incidence-operator} retains only the term anchored at \(s(e)\). Hence
\[
\begin{aligned}
(\delta^\nabla_{\mathfrak{A}_1}\theta)(a,f)
&=
\sum_{e\prec f}
\varepsilon(f,e)
P_{\gamma^a_{f,s(e)}}\theta(s(e),e)\\
&=\Theta(a,f).
\end{aligned}
\]
\end{proof}

\begin{remark}
Proposition~\ref{prop:torsion-recovery} is an equality for a specific anchoring prescription. It does not yet constitute a full Cartan structure equation: such an equation would additionally require an independently defined connection-valued cochain, an action product, and transformation laws relating all terms.
\end{remark}

\begin{problem}
Can the finite holonomy defect or the orientation-odd curvature be recovered from a rigorously defined covariant derivative of connection data?
\end{problem}

\begin{problem}
Under what assumptions are the operational and differential descriptions equivalent?
\end{problem}

Answers to these questions would provide combinatorial analogues of Cartan's structure equations and clarify the relationship between finite transport maps and covariant differential operators on explicit cellular fabrics.

\section{Metric Structures}
\label{sec:Metrics}

The fibres, solder forms, and transport maps introduced previously are independent of any notion of length, angle, or orthogonality. Metric information enters through inner products on the local fibres. A fibre metric is independent data in general; particular choices may be constructed from the cell measures used in scalar Combinatorial Mesh Calculus \cite{BERBATOV2026}, together with additional pairwise information between incident directions.

Throughout this section, the transport structure and the metric are treated as independent data. Compatibility conditions are imposed only when required.

\subsection{Fibre metrics}
\label{subsec:Metrics_Local}

Let
\[
a\in \mathcal{K}_0
\]
and let
\[
F_a = \operatorname{span} \{\mathbf e_{a,b}:
b\in\operatorname{Adj}(a)\}.
\]

\begin{definition}[Fibre metric]
A fibre metric at \(a\) is a symmetric positive-definite bilinear form
\[
g_a : F_a\times F_a \rightarrow \mathbb R.
\]
The collection
\[
g = \{g_a\}_{a\in \mathcal{K}_0}
\]
is called a combinatorial fibre metric.
\end{definition}

\paragraph{Representations in the fibre basis.}

The definition of a fibre metric does not prescribe its matrix with respect to the incident-edge basis
\[
\{\mathbf e_{a,b}:b\in\operatorname{Adj}(a)\}.
\]
Different choices of this Gram matrix correspond to different metric structures on the same fibre. Two representative examples are considered below.

For \(b\in\operatorname{Adj}(a)\), let \(e_{ab}\) denote the unique \(1\)-cell joining \(a\) to \(b\), and set
\[
\ell_{ab}=\mu(e_{ab})>0.
\]

\paragraph{Example: diagonal metric.}

The simplest choice assumes that the basis generators associated with incident 1-cells are mutually orthogonal. For
\[
b_i,b_j\in\operatorname{Adj}(a),
\]
define
\[
g_a(\mathbf e_{a,b_i},\mathbf e_{a,b_j}) = \begin{cases} \ell_{ab_i}^{\,2}, & i=j,\\[2mm] 0, & i\neq j. \end{cases}
\]
Thus, the norm of a basis generator is determined directly by the measure of the corresponding 1-cell.

\paragraph{Example: coupled metric.}

A more general metric permits non-zero pairings between distinct incident directions. Let
\[
\rho_a(b_i,b_j)
\]
be a dimensionless symmetric coefficient satisfying
\[
\rho_a(b_i,b_j)=\rho_a(b_j,b_i), \qquad
\rho_a(b_i,b_i)=1.
\]
Require, in addition, that
\[
R_a=\bigl[\rho_a(b_i,b_j)\bigr]_{i,j=1}^{n_a}
\]
be positive definite.

The fibre metric is then defined by
\[
g_a(\mathbf e_{a,b_i},\mathbf e_{a,b_j})
=\ell_{ab_i}\ell_{ab_j}\rho_a(b_i,b_j).
\]
Equivalently, with respect to the incident-edge basis,
\[
(g_a)_{ij}
=\ell_{ab_i}\ell_{ab_j}\rho_a(b_i,b_j).
\]

\begin{proposition}[Admissibility of the coupled metric]
Let
\[
D_a=\operatorname{diag}
\bigl(\ell_{ab_1},\ldots,\ell_{ab_{n_a}}\bigr).
\]
The coupled Gram matrix is
\[
G_a=D_aR_aD_a.
\]
It is positive definite if and only if \(R_a\) is positive definite.
\end{proposition}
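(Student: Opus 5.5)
The plan is a direct congruence argument. First I will verify the factorisation \(G_a=D_aR_aD_a\) entrywise: since \(D_a\) is diagonal, \((D_aR_aD_a)_{ij}=\ell_{ab_i}\rho_a(b_i,b_j)\ell_{ab_j}\), which is exactly the coupled Gram entry \((g_a)_{ij}\). Symmetry of \(G_a\) then follows from symmetry of \(R_a\), because \(D_a^{\mathsf T}=D_a\) gives \(G_a^{\mathsf T}=D_aR_a^{\mathsf T}D_a=G_a\).

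Next I will use the positivity of the cell measures. Each \(\ell_{ab_i}=\mu(e_{ab_i})>0\), so \(D_a\) is invertible, with \(D_a^{-1}=\operatorname{diag}(\ell_{ab_1}^{-1},\ldots,\ell_{ab_{n_a}}^{-1})\). For any \(x\in\mathbb R^{n_a}\), set \(y=D_ax\). Then
\[
x^{\mathsf T}G_ax=(D_ax)^{\mathsf T}R_a(D_ax)=y^{\mathsf T}R_ay,
\]
and \(x\ne0\) if and only if \(y\ne0\), because \(D_a\) is a bijection of \(\mathbb R^{n_a}\).

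The two implications then follow. If \(R_a\) is positive definite, then \(y^{\mathsf T}R_ay>0\) for all \(y\ne0\), so \(x^{\mathsf T}G_ax>0\) for all \(x\ne0\). Conversely, if \(G_a\) is positive definite, write \(R_a=D_a^{-1}G_aD_a^{-1}\) and apply the same argument with \(D_a^{-1}\) in place of \(D_a\). This is Sylvester's observation that congruence by an invertible matrix preserves positive definiteness.

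There is no real obstacle. The only point to state explicitly is that strict positivity of \(\mu\), from the definition of cell measures, is what makes \(D_a\) invertible. Without it the converse direction fails: a zero \(\ell_{ab_i}\) would make \(G_a\) singular however \(R_a\) is chosen.
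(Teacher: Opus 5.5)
Your proof is correct and follows the paper's argument: positivity of the edge measures makes $D_a$ invertible, and then $x^{\mathsf T}G_ax=(D_ax)^{\mathsf T}R_a(D_ax)$ with $D_a$ a bijection gives both directions. One slip in your closing remark: if some $\ell_{ab_i}$ were zero, the direction that fails is ``$R_a$ positive definite $\Rightarrow$ $G_a$ positive definite'', while the converse would then hold vacuously because $G_a$ is singular.
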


\begin{proof}
The matrix \(D_a\) is invertible because every edge measure is positive. For every non-zero column vector \(x\),
\[
x^{\mathsf T}G_ax = (D_ax)^{\mathsf T}R_a(D_ax).
\]
The claim follows because \(D_ax\ne0\) whenever \(x\ne0\).
\end{proof}

\begin{remark}
The diagonal metric is recovered by taking
\[
\rho_a(b_i,b_j)=\delta_{ij}.
\]
More general positive-definite choices of \(R_a\) introduce metric couplings between incident directions while preserving the physical dimensions inherited from the edge measures. Symmetry and unit diagonal alone are not sufficient for admissibility.
\end{remark}

\subsection{Inner products of fibre-valued incidence cochains} \label{subsec:Metrics_CochainInnerProducts}

The fibre metric induces inner products on spaces of  fibre-valued incidence cochains. To avoid counting the scalar cell weight once for every incident vertex, choose positive incidence weights
\[
\nu_{a|\sigma}>0,
\qquad
\sum_{a\prec\sigma}\nu_{a|\sigma}=1
\]
for every cell \(\sigma\). The uniform choice is
\[
\nu_{a|\sigma} = \frac{1}{\#\operatorname{Vert}(\sigma)}.
\]

Let
\[
X,Y\in C^p(\mathcal{K};F).
\]
For each incidence
\[
(a,\sigma)\in \mathcal{K}_{0,p},
\]
the coefficients
\[
X(a,\sigma),\; Y(a,\sigma)
\]
belong to the same fibre $F_a$ and may therefore be paired through the fibre metric \(g_a\).

\begin{definition}[Vector-valued cochain inner product]
For
\[
X,Y\in C^p(\mathcal{K};F),
\]
define
\[
\langle X,Y\rangle_p = \sum_{(a,\sigma)\in \mathcal{K}_{0,p}} \nu_{a|\sigma} \langle \sigma^\bullet,\sigma^\bullet\rangle_p\, g_a\!\left(X(a,\sigma),Y(a,\sigma)\right),
\]
where
\[
\langle \sigma^\bullet,\sigma^\bullet\rangle_p
\]
denotes the scalar CMC basis norm of the \(p\)-cell \(\sigma\).
\end{definition}

Similarly, the inverse metric induces inner products on covector-valued cochains.

\begin{definition}[Covector-valued cochain inner product]
For
\[
\eta,\xi\in C^p(\mathcal{K};F^*),
\]
define
\[
\langle \eta,\xi\rangle_p
=
\sum_{(a,\sigma)\in \mathcal{K}_{0,p}}
\nu_{a|\sigma}
\langle \sigma^\bullet,\sigma^\bullet\rangle_p\,
g_a^{-1}\!\left(\eta(a,\sigma),\xi(a,\sigma) \right).
\]
\end{definition}

Using the scalar CMC inner product \cite{BERBATOV2026},
\[
\langle \sigma^\bullet,\sigma^\bullet\rangle_p = \frac{1}{2^D\mu(\sigma)} \sum_{\substack{ \chi\in \mathcal{K}_D\\ \tau\in \mathcal{K}_{D-p}\\ \sigma\perp_\chi\tau }} \mu(\tau),
\]
the fibre-valued incidence cochain inner products become
\[
\langle X,Y\rangle_p
=
\sum_{(a,\sigma)\in \mathcal{K}_{0,p}}
\nu_{a|\sigma}
\left(\frac{1}{2^D\mu(\sigma)}
\sum_{\substack{\chi\in \mathcal{K}_D\\ \tau\in \mathcal{K}_{D-p}\\ \sigma\perp_\chi\tau }} \mu(\tau)\right) g_a\!\left(X(a,\sigma), Y(a,\sigma)\right),
\]
and
\[
\langle \eta,\xi\rangle_p
=
\sum_{(a,\sigma)\in \mathcal{K}_{0,p}}
\nu_{a|\sigma}
\left(
\frac{1}{2^D\mu(\sigma)} \sum_{\substack{ \chi\in \mathcal{K}_D\\ \tau\in \mathcal{K}_{D-p}\\ \sigma\perp_\chi\tau }} \mu(\tau)\right) g_a^{-1}\!\left(\eta(a,\sigma),\xi(a,\sigma) \right).
\]
These expressions are positive-definite inner products because the scalar basis weights, incidence weights, and fibre metrics are positive.

\begin{proposition}[Scalar consistency of the incidence inner product]
\label{prop:metric-scalar-consistency}
Let every fibre be \(\mathbb R\) with its standard metric, and let
\[
(\iota\alpha)(a,\sigma)=\alpha(\sigma)
\]
be the vertex-independent incidence lift of a scalar cochain. Then
\[
\langle\iota\alpha,\iota\beta\rangle_p
=\langle\alpha,\beta\rangle_p
\]
for the scalar CMC inner product.
\end{proposition}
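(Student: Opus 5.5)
The plan is to substitute the lift directly into the definition of the vector-valued incidence inner product and collapse the incidence sum with the partition-of-unity condition on the weights $\nu_{a|\sigma}$. This mirrors the proof of Proposition~\ref{prop:scalar-recovery}.

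First, with $F_a=\mathbb R$ and $g_a(x,y)=xy$, each summand becomes
\[
\nu_{a|\sigma}\,\langle\sigma^\bullet,\sigma^\bullet\rangle_p\,\alpha(\sigma)\beta(\sigma).
\]
Before summing, I will check that this is well defined under orientation. Reversing $\sigma$ changes the sign of both $\alpha(\sigma)$ and $\beta(\sigma)$, so their product is unchanged. The quantities $\nu_{a|\sigma}$ and $\langle\sigma^\bullet,\sigma^\bullet\rangle_p$ depend only on the underlying unoriented cell. Hence the sum over $\mathcal K_{0,p}$ can be taken over one oriented representative per cell, exactly as in the scalar inner product.

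Second, I will regroup the finite sum over incidences $(a,\sigma)$ as an outer sum over $\sigma\in\mathcal K_p$ and an inner sum over $a\prec\sigma$. The factor $\langle\sigma^\bullet,\sigma^\bullet\rangle_p\,\alpha(\sigma)\beta(\sigma)$ does not depend on $a$ and comes out of the inner sum. The condition $\sum_{a\prec\sigma}\nu_{a|\sigma}=1$ then reduces the inner sum to $1$. This leaves
\[
\sum_{\sigma\in\mathcal K_p}\langle\sigma^\bullet,\sigma^\bullet\rangle_p\,\alpha(\sigma)\beta(\sigma).
\]
Because scalar CMC basis cochains are orthogonal, this is the expansion of $\langle\alpha,\beta\rangle_p$ in the basis $\{\sigma^\bullet\}$ with coefficients $\alpha(\sigma),\beta(\sigma)$.

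The only point needing care is the first one: making the orientation bookkeeping explicit, so that $\mathcal K_{0,p}$ is read with one representative per cell and matches the convention of the scalar inner product. No analytic obstacle arises. As in Proposition~\ref{prop:scalar-recovery}, the normalisation of the incidence weights is precisely what prevents each cell from being counted $\#\operatorname{Vert}(\sigma)$ times.
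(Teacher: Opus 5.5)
Your proposal is correct and follows the paper's proof: substitute the lift, group the incidence sum by cells, and use $\sum_{a\prec\sigma}\nu_{a|\sigma}=1$ to leave exactly one copy of $\langle\sigma^\bullet,\sigma^\bullet\rangle_p\,\alpha(\sigma)\beta(\sigma)$ per cell. Your orientation check and your appeal to orthogonality of the scalar basis cochains only spell out what the paper's two-line argument leaves implicit.
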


\begin{proof}
For each \(\sigma\), the lifted coefficient is independent of \(a\), and
\[
\sum_{a\prec\sigma}\nu_{a|\sigma}=1.
\]
Substitution therefore leaves exactly one copy of the scalar basis weight for every cell.
\end{proof}

\begin{remark}
The inner products do not require transport maps or metric compatibility conditions. They depend on the local fibre metrics, the scalar cochain weights, and the incidence weights \(\nu_{a|\sigma}\).
\end{remark}

\subsection{Induced structures}\label{subsec:Metrics_Induced}

The fibre metric induces corresponding metric structures on the dual fibres and associated tensor spaces.

For vectors,
\[
\langle X,Y\rangle_a = g_a(X,Y), \qquad X,Y\in F_a.
\]
The metric defines the musical maps
\[
g_a^\flat: F_a \longrightarrow F_a^*, \qquad
g_a^\flat(X)(Y) = g_a(X,Y),
\]
and
\[
g_a^\sharp: F_a^* \longrightarrow F_a,
\]
where \(g_a^\sharp\) is the inverse of \(g_a^\flat\).

Equivalently, the inverse metric
\[ 
g_a^{-1}: F_a^*\times F_a^* \longrightarrow \mathbb R
\]
defines the dual inner product
\[
\langle\eta,\xi\rangle_a = g_a^{-1}(\eta,\xi),
\qquad \eta,\xi\in F_a^*.
\]

Similarly, the metric induces inner products on tensor products, exterior powers, and symmetric powers of the fibres.

For fibre endomorphisms,
\[
A,B\in\operatorname{End}(F_a),
\]
the induced inner product is
\[
\langle A,B\rangle_a = \operatorname{tr}\left(A^*B\right),
\]
where \(A^*\) denotes the adjoint with respect to the metric \(g_a\).

\begin{proposition}[Metric decomposition of fibre endomorphisms]
Let
\[
A\in\operatorname{End}(F_a).
\]
Then
\[
A = A_{\mathrm{rot}} + A_{\mathrm{dev}} + A_{\mathrm{sph}},
\]
where
\[
A_{\mathrm{rot}} = \frac12(A-A^\ast),
\]
\[
A_{\mathrm{sph}} = \frac1{\dim(F_a)}
\operatorname{tr}(A)\,I,
\]
and
\[
A_{\mathrm{dev}} = \frac12(A+A^\ast)-A_{\mathrm{sph}}.
\]
This decomposition is unique.
\end{proposition}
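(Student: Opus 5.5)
The plan is to verify the identity directly and then prove uniqueness by characterising the summands as lying in three mutually complementary subspaces of \(\operatorname{End}(F_a)\). Write \(n=\dim F_a=n_a\ge 1\), and let \(A^\ast\) be the \(g_a\)-adjoint, defined by \(g_a(AX,Y)=g_a(X,A^\ast Y)\). The map \(A\mapsto A^\ast\) is a linear involution of \(\operatorname{End}(F_a)\), with \(\operatorname{tr}(A^\ast)=\operatorname{tr}(A)\) and \(I^\ast=I\). In a \(g_a\)-orthonormal basis it is matrix transposition; in the incidence-edge basis it is \(A^\ast=G_a^{-1}A^{\mathsf T}G_a\).

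\textbf{Existence.} Summing the three definitions gives
\[
\tfrac12(A-A^\ast)+\tfrac12(A+A^\ast)-A_{\mathrm{sph}}+A_{\mathrm{sph}}=A.
\]

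\textbf{Properties of the summands.} I will record three facts.
\begin{enumerate}
\item \(A_{\mathrm{rot}}\) is \(g_a\)-skew: \(A_{\mathrm{rot}}^\ast=-A_{\mathrm{rot}}\).
\item \(A_{\mathrm{dev}}\) is \(g_a\)-self-adjoint and traceless: \(\operatorname{tr}A_{\mathrm{dev}}=\operatorname{tr}A-\operatorname{tr}A_{\mathrm{sph}}=0\).
\item \(A_{\mathrm{sph}}\) is a scalar multiple of \(I\).
\end{enumerate}

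\textbf{Uniqueness.} The statement should be read as follows: the decomposition \(A=R+S+\lambda I\) with \(R^\ast=-R\), \(S^\ast=S\), \(\operatorname{tr}S=0\) and \(\lambda\in\mathbb R\) is unique. Uniqueness is of course trivial if one only reads the formulas as definitions, so the real content is this characterisation.

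Take any such decomposition and apply \(\ast\). This gives \(A^\ast=-R+S+\lambda I\), so \(R=\tfrac12(A-A^\ast)=A_{\mathrm{rot}}\) and \(S+\lambda I=\tfrac12(A+A^\ast)\).

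Now take traces. We have \(\operatorname{tr}R=0\), since \(\operatorname{tr}R=\operatorname{tr}R^\ast=-\operatorname{tr}R\), and \(\operatorname{tr}S=0\). Hence \(n\lambda=\operatorname{tr}A\), which gives \(\lambda I=A_{\mathrm{sph}}\) and then \(S=A_{\mathrm{dev}}\). Equivalently, the skew, traceless-self-adjoint and scalar subspaces form a direct sum equal to \(\operatorname{End}(F_a)\). One can also note that they are mutually orthogonal for \(\langle A,B\rangle_a=\operatorname{tr}(A^\ast B)\), though this is not needed.

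\textbf{Main obstacle.} The only delicate points are interpretive. First, uniqueness must be stated relative to the three characterising subspaces. Second, one must check that the trace and adjoint are taken with respect to \(g_a\) and not the incidence-basis transpose; this is why \(A^\ast=G_a^{-1}A^{\mathsf T}G_a\) is used. Third, the step \(\operatorname{tr}(A^\ast)=\operatorname{tr}(A)\) holds because \(A^\ast\) is similar to \(A^{\mathsf T}\). Finally, \(n\ge1\) is required for the division in \(A_{\mathrm{sph}}\), and this holds whenever \(a\) has at least one incident edge.
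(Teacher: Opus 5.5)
Your proof is correct and follows the same route as the paper. Both check that the three summands add up to $A$ and lie respectively in the $g_a$-skew, traceless $g_a$-self-adjoint and scalar subspaces, then deduce uniqueness from the directness of that sum.

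Your uniqueness step is tighter than the paper's. The paper only asserts that ``the intersection of these three subspaces is trivial'', and that condition alone does not give uniqueness for three subspaces. By contrast, you apply $\ast$ to isolate $R=\tfrac12(A-A^\ast)$ and then take traces to fix $\lambda=\operatorname{tr}(A)/n$, which genuinely establishes the direct sum. Your remark that $n_a\ge 1$ is needed to define $A_{\mathrm{sph}}$ is also a point the paper leaves implicit.
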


\begin{proof}
The adjoint satisfies \((A^\ast)^\ast=A\). Hence \(A_{\mathrm{rot}}^\ast=-A_{\mathrm{rot}}\), while \(\frac12(A+A^\ast)\) is self-adjoint. Since \(\operatorname{tr}(A^\ast)=\operatorname{tr}(A)\), the operator \(A_{\mathrm{dev}}\) is self-adjoint and trace-free. The three terms therefore belong respectively to the metric-skew, self-adjoint trace-free, and scalar subspaces of \(\operatorname{End}(F_a)\). Their sum is \(A\), and the intersection of these three subspaces is trivial, so the decomposition is unique.
\end{proof}

\begin{remark}[Interpretation of the components]
The component
\[
A_{\mathrm{rot}}
\]
is the metric-skew part of the endomorphism and may be interpreted as an infinitesimal rotational component relative to the fibre metric. It is not, in general, a finite rotation. The component
\[
A_{\mathrm{sph}}
\]
acts identically on all directions of the fibre and therefore represents the isotropic part of the transformation. The component
\[
A_{\mathrm{dev}}
\]
is symmetric and trace-free. It measures anisotropic distortion after the removal of both the rotational and isotropic components.
\end{remark}

\begin{remark}
Since
\[
\Omega(a,f)\in\operatorname{End}(F_a),
\]
the orientation-odd curvature admits the above algebraic decomposition. These components are respectively its metric-skew, self-adjoint trace-free, and scalar parts. A physical interpretation of these components requires additional constitutive or kinematic assumptions and is not asserted here.
\end{remark}

\begin{remark}[Endomorphism-valued kinematics and curvature]
Let
\[
u\in C^0(\mathcal{K};F)
\]
be a vector-valued \(0\)-cochain and let
\[
X=\delta^\nabla_{\mathfrak A_0}u
\]
for fixed degree-\(0\) anchoring data.
The associated endomorphism-valued \(0\)-cochain
\[
A_X\in C^0(\mathcal{K};\operatorname{End}(F))
\]
and the curvature endomorphism
\[
\Omega\in C^2(\mathcal{K};\operatorname{End}(F))
\]
both take values in the same fibre algebra
\[
\operatorname{End}(F_a).
\]
Consequently, both admit the above decomposition into rotational, trace-free symmetric, and scalar components.

The endomorphism \(A_X(a)\) describes the local action of the covariant differential of \(u\) on the directions represented by the canonical solder, whereas \(\Omega(a,f)\) is the orientation-odd part of the holonomy around a \(2\)-cell. Whether these two classes of endomorphisms are related through compatibility conditions, evolution equations, or other geometric constraints remains an open question.
\end{remark}

\subsection{Fibre Hodge operators.}

The scalar Hodge-star operator of scalar Combinatorial Mesh Calculus is defined through the scalar cochain inner products and the scalar cup product. The present operators are introduced analogously: the scalar cup product is replaced by the evaluation cup product of Section~\ref{subsec:BundleValued_Evaluation}, while the scalar cochain inner products are replaced by the incidence-cochain inner products of Section~\ref{subsec:Metrics_CochainInnerProducts}. The fibre metric then provides the conversion between vector- and covector-valued coefficients.

For the vector-to-covector Hodge star, the target-side inner product is the covector-valued cochain inner product of Definition~8.4. For the covector-to-vector Hodge star, the target-side inner product is the vector-valued cochain inner product of Definition~8.3.

\begin{definition}[Vector-to-covector Hodge star]
For
\[
0\le p\le D,
\]
the vector-to-covector Hodge star
\[
\star_p^\flat: C^p(\mathcal{K};F) \longrightarrow C^{D-p}(\mathcal{K};F^*)
\]
is the unique operator satisfying
\[
\bigl\langle \star_p^\flat X, \eta \bigr\rangle_{D-p} = (X\smile_{\mathrm{ev}}\eta) [\mathcal{K}]
\]
for every
\[
X\in C^p(\mathcal{K};F), \qquad \eta\in C^{D-p}(\mathcal{K};F^*).
\]
\end{definition}

\begin{definition}[Covector-to-vector Hodge star]
For
\[
0\le p\le D,
\]
the covector-to-vector Hodge star
\[
\star_p^\sharp: C^p(\mathcal{K};F^*) \longrightarrow C^{D-p}(\mathcal{K};F)
\]
is the unique operator satisfying
\[
\bigl\langle \star_p^\sharp\eta, X \bigr\rangle_{D-p} = (\eta\smile_{\mathrm{ev}}X) [\mathcal{K}]
\]
for every
\[
\eta\in C^p(\mathcal{K};F^*), \qquad X\in C^{D-p}(\mathcal{K};F).
\]
\end{definition}

\begin{proposition}[Existence and uniqueness of the Hodge operators]\label{prop:bundle-hodge-existence}
The two Hodge operators in the preceding definitions exist and are unique.
\end{proposition}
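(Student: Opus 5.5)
The plan is a Riesz-representation argument on finite-dimensional inner product spaces. First I will check that the target spaces $C^{D-p}(\mathcal{K};F^*)$ and $C^{D-p}(\mathcal{K};F)$ are finite-dimensional real vector spaces. Since $\mathcal{K}$ is finite, each $F_a$ is finite-dimensional, and an alternating incidence cochain is determined by its values on one chosen orientation of each cell. Next I will check that the pairings $\langle\cdot,\cdot\rangle_{D-p}$ of Definitions~8.3 and~8.4 are genuine inner products: bilinear, symmetric, and positive definite. Positive definiteness follows because every incidence $(a,\sigma)$ carries a strictly positive weight $\nu_{a|\sigma}\langle\sigma^\bullet,\sigma^\bullet\rangle_{D-p}$ and a positive-definite form $g_a$ or $g_a^{-1}$. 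Two points need a little care here. The sum runs over $(a,\sigma)$ with one orientation per cell, which is harmless because reversing orientation does not change the summand, as the product is orientation-even. And the scalar basis norm is strictly positive, since each cell of the cubical complex has at least one topologically orthogonal partner in some top cell; I would state this as an assumption inherited from scalar CMC if needed.

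For fixed $X\in C^p(\mathcal{K};F)$, I then consider the functional $\Lambda_X:\eta\mapsto (X\smile_{\mathrm{ev}}\eta)[\mathcal{K}]$ on $C^{D-p}(\mathcal{K};F^*)$. It is linear in $\eta$ because each summand $\eta(N(b,c),c)\bigl(X(N(b,c),b)\bigr)$ is linear in $\eta$, and evaluation on the fundamental chain is linear. The one point to verify is that $\Lambda_X$ is well defined on alternating cochains. The summand uses the values of $X$ and $\eta$ at specific oriented subcells, and the sign factor $\operatorname{rel}(a,b,c)$ is inherited from the scalar cup product. So the product expression is consistent with the orientation conventions exactly as in the scalar case, where it defines a cochain. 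By the Riesz representation theorem there is a unique $\star_p^\flat X\in C^{D-p}(\mathcal{K};F^*)$ with $\langle\star_p^\flat X,\eta\rangle_{D-p}=\Lambda_X(\eta)$ for all $\eta$. Uniqueness is immediate: if two candidates satisfy the defining identity, their difference is orthogonal to everything and hence zero by positive definiteness.

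Linearity of $X\mapsto\star_p^\flat X$ follows from bilinearity of $\smile_{\mathrm{ev}}$ together with uniqueness. This gives a unique linear operator, and uniqueness holds even among arbitrary maps, because the value $\star_p^\flat X$ is forced for each $X$. The covector-to-vector operator $\star_p^\sharp$ is handled identically, using the functional $X\mapsto(\eta\smile_{\mathrm{ev}}X)[\mathcal{K}]$ and the vector inner product of Definition~8.3.

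I expect the main obstacle to be the bookkeeping in the first step rather than the Riesz argument itself. One has to make precise that the inner product, written as a sum over incidences, is a well-defined positive-definite form on the space of alternating cochains, and not merely on all incidence functions. One also has to check that the scalar basis weights $\langle\sigma^\bullet,\sigma^\bullet\rangle_{D-p}$ are strictly positive. I would handle the first point by passing to the isomorphic space of functions on incidences with fixed cell representatives. For the second, I would argue that every cell of a cubical complex with $D$-dimensional cofaces admits a complementary orthogonal face. If that fails in degenerate complexes, I would add a purity assumption on $\mathcal{K}$.
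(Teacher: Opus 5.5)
Your proposal is correct and follows the paper's proof. That proof rests on finite-dimensionality and positive definiteness of the incidence inner products, the linear functional $\eta\mapsto(X\smile_{\mathrm{ev}}\eta)[\mathcal{K}]$, the finite-dimensional Riesz theorem for existence and uniqueness, bilinearity of $\smile_{\mathrm{ev}}$ for linearity in $X$, and the same argument for $\star_p^\sharp$. Your extra bookkeeping — orientation-evenness of the summands, and strict positivity of the scalar weights $\langle\sigma^\bullet,\sigma^\bullet\rangle$, which needs every cell to have a topologically orthogonal partner (for example under purity of $\mathcal{K}$) — makes explicit what the paper only asserts in Section~\ref{subsec:Metrics_CochainInnerProducts}.
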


\begin{proof}
All incidence-cochain spaces are finite-dimensional, and the inner products of Section~\ref{subsec:Metrics_CochainInnerProducts} are positive definite. For fixed \(X\in C^p(\mathcal K;F)\), the map
\[
\eta\longmapsto
(X\smile_{\mathrm{ev}}\eta)[\mathcal K]
\]
is a linear functional on \(C^{D-p}(\mathcal K;F^*)\). The finite-dimensional Riesz representation theorem therefore gives a unique representing element \(\star_p^\flat X\). Linearity in \(X\) follows from bilinearity of the evaluation cup product. The argument for \(\star_p^\sharp\) is identical.
\end{proof}

\begin{proposition}[Operational form of the fibre Hodge stars]
Let
\[
X\in C^p(\mathcal{K};F), \qquad \eta\in C^p(\mathcal{K};F^*),
\]
and let
\[
(a,c)\in \mathcal{K}_{0,D-p}.
\]
Then the fibre Hodge stars satisfy
\[
(\star_p^\flat X)(a,c) = \frac{1}
{\nu_{a|c}\langle c^\bullet,c^\bullet\rangle_{D-p}} \sum_{\substack{ \chi\in \mathcal{K}_D\\ b\in \mathcal{K}_p\\ b\perp_\chi c\\
N(b,c)=a}} (b^\bullet\smile c^\bullet)[\chi_\bullet]\, g_a^\flat\!\left(X(a,b)\right),
\]
and
\[
(\star_p^\sharp \eta)(a,c) = \frac{1}{\nu_{a|c}\langle c^\bullet,c^\bullet\rangle_{D-p}} \sum_{\substack{\chi\in \mathcal{K}_D\\ b\in \mathcal{K}_p\\ b\perp_\chi c\\
N(b,c)=a}} (b^\bullet\smile c^\bullet)[\chi_\bullet]\, g_a^\sharp\!\left(\eta(a,b)\right).
\]
\end{proposition}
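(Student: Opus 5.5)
The plan is to compute the Riesz representative of Proposition~\ref{prop:bundle-hodge-existence} explicitly, by testing the defining identity against elementary covector-valued incidence cochains supported on a single incidence. I describe the argument for \(\star_p^\flat\); the argument for \(\star_p^\sharp\) is identical with \(g_a^\flat\) replaced by \(g_a^\sharp\) and \(g_a^{-1}\) by \(g_a\).

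\textbf{Step 1: test cochains.} Fix an incidence \((a,c)\in\mathcal K_{0,D-p}\) and a covector \(\xi\in F_a^*\). Let \(\eta_{a,c,\xi}\) be the incidence cochain with value \(\xi\) on \((a,c)\), value \(-\xi\) on \((a,-c)\), and zero elsewhere. These cochains span \(C^{D-p}(\mathcal K;F^*)\), so it suffices to match both sides of the defining relation on them.

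\textbf{Step 2: left-hand side.} Since the covector inner product is diagonal in the incidences, the left-hand side becomes
\[
\langle\star_p^\flat X,\eta_{a,c,\xi}\rangle_{D-p}
=\nu_{a|c}\langle c^\bullet,c^\bullet\rangle_{D-p}\,
g_a^{-1}\bigl((\star_p^\flat X)(a,c),\xi\bigr).
\]
There is exactly one term for the pair \(\pm c\), matching the single-cell convention of the sum.

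\textbf{Step 3: right-hand side.} Evaluate \((X\smile_{\mathrm{ev}}\eta_{a,c,\xi})[\mathcal K]\) by summing the cup-product formula over all \(\chi\in\mathcal K_D\). Only perpendicular pairs \((b,c')\in\perp_{p,D-p}\chi\) with \(c'=c\) and \(N(b,c)=a\) survive. Each surviving pair contributes its scalar weight \(\tfrac{1}{2^D}\operatorname{rel}(\chi,b,c)\), which is precisely the scalar cup value \((b^\bullet\smile c^\bullet)[\chi_\bullet]\), multiplied by \(\xi\bigl(X(a,b)\bigr)\). Rewriting \(\xi(X(a,b))=g_a^{-1}\bigl(g_a^\flat X(a,b),\xi\bigr)\) gives
\[
\sum_{\chi,b:\,b\perp_\chi c,\,N(b,c)=a}(b^\bullet\smile c^\bullet)[\chi_\bullet]\,
g_a^{-1}\bigl(g_a^\flat X(a,b),\xi\bigr).
\]

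\textbf{Step 4: conclude.} Since \(\xi\) is arbitrary and \(g_a^{-1}\) is nondegenerate, comparing Steps 2 and 3 and dividing by the positive factor \(\nu_{a|c}\langle c^\bullet,c^\bullet\rangle_{D-p}\) yields the stated formula. Uniqueness from Proposition~\ref{prop:bundle-hodge-existence} then identifies this expression with \(\star_p^\flat X\).

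\textbf{Main obstacle.} The delicate step is Step 3, namely the sign and orientation bookkeeping. I must check three things.
\begin{itemize}
\item The weight \(\tfrac{1}{2^D}\operatorname{rel}\) coincides exactly with \((b^\bullet\smile c^\bullet)[\chi_\bullet]\) under the orientation convention of the fundamental chain, where each \(\chi\) appears once with its induced orientation.
\item Using \(-c\) in place of \(c\) does not double count. This follows from the alternating property, because both \(\eta\) and the scalar weight change sign.
\item The reversed-order product \(X\smile_{\mathrm{ev}}\eta\) puts \(X\) on the \(p\)-cell \(b\) and \(\eta\) on \(c\), so the pair ordering \((b,c)\) in the formula is the correct one.
\end{itemize}
I would first verify these on a single square (\(D=2\), \(p=1\)) to fix the conventions, and then argue generally using the definitions of \(\perp_{p,q}\) and \(\operatorname{rel}\) from \cite{BERBATOV2026}.
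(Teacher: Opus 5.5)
Your proposal is correct and follows essentially the same route as the paper. You test the defining identity against covector-valued cochains supported on the single incidence $(a,\pm c)$, use the diagonal incidence inner product to isolate the factor $\nu_{a|c}\langle c^\bullet,c^\bullet\rangle_{D-p}$, keep only the perpendicular pairs with $N(b,c)=a$ in the evaluation cup product, and solve the resulting Riesz identity in $F_a$ via $\xi(X(a,b))=g_a^{-1}(g_a^\flat X(a,b),\xi)$. Your orientation bookkeeping is more explicit than the paper's sketch, and for $\star_p^\sharp$ you test against vector-valued cochains on $(a,c)$ with the unreversed product $\eta\smile_{\mathrm{ev}}X$, exactly as the paper does.
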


Consequently,
\[
(\star_p^\flat X)(a,c) = \frac{1}{2^D\,\nu_{a|c}\langle c^\bullet,c^\bullet\rangle_{D-p}} \sum_{\substack{\chi\in \mathcal{K}_D\\ b\in \mathcal{K}_p\\ b\perp_\chi c\\
N(b,c)=a}} \operatorname{rel}(\chi,b,c)\,
g_a^\flat\!\left(X(a,b)\right),
\]
and
\[
(\star_p^\sharp \eta)(a,c) = \frac{1} {2^D\,\nu_{a|c}\langle c^\bullet,c^\bullet\rangle_{D-p}} \sum_{\substack{\chi\in \mathcal{K}_D\\ b\in \mathcal{K}_p\\
b\perp_\chi c\\
N(b,c)=a}} \operatorname{rel}(\chi,b,c)\,
g_a^\sharp\!\left(\eta(a,b)\right).
\]
\begin{proof}
The proof follows the scalar argument. Test the defining identity against a covector-valued basis cochain supported on a single
incidence \((a,c)\). Orthogonality of the fibre-valued incidence cochain inner product isolates the coefficient \((\star_p^\flat X)(a,c)\) with the factor
\[
\nu_{a|c}\langle c^\bullet,c^\bullet\rangle_{D-p}.
\]
The evaluation cup product contributes only those pairs \((b,c)\) for which \(b\perp_\chi c\) and \(N(b,c)=a\). Solving the resulting Riesz identity in the fibre \(F_a\) gives the stated formula. For \(\star_p^\sharp\), one tests against a vector-valued basis cochain supported on \((a,c)\); the same argument gives the second formula with \(g_a^\sharp\) replacing \(g_a^\flat\).
\end{proof}

These definitions are direct analogues of the scalar Hodge-star construction. The scalar pairing between complementary cochains is replaced by the
evaluation pairing between fibres and dual fibres, while the fibre metric provides the identification between vector-valued and covector-valued
coefficients.

In particular, for a three-dimensional complex,
\[
\star_1^\flat:
C^1(\mathcal K;F)
\longrightarrow
C^2(\mathcal K;F^*)
\]
converts directional quantities into dual-valued quantities.
Likewise,
\[
\star_2^\sharp:
C^2(\mathcal K;F^*)
\longrightarrow
C^1(\mathcal K;F)
\]
converts dual-valued quantities back into directional ones.

\begin{remark}
The fibre Hodge star contains the scalar CMC cell weights, the incidence weights, and the fibre metric. These data define a geometric Riesz map. Additional constitutive information may be incorporated through a separately specified positive or otherwise physically admissible fibre operator; it is not implied by the metric alone.
\end{remark}

\begin{remark}
The fibre Hodge operators combine naturally with the evaluation cup product. For example,
\[
(\star_p^\flat X)\smile_{\mathrm{ev}}X \in
C^D(\mathcal{K})
\]
is a scalar \(D\)-cochain obtained by pairing a vector-valued cochain with its associated dual-valued cochain. Such quantities provide natural combinatorial analogues of energies, powers, virtual works, and related bilinear forms.
\end{remark}

\subsection{Metric compatibility}
\label{subsec:Metrics_Compatibility}

The directed transport structure and the metric are introduced independently.

\begin{definition}[Metric-compatible directed transport]
A directed edge map \(P_{ab}:F_a\to F_b\) is metric compatible if
\[
g_b(P_{ab}X,P_{ab}Y)=g_a(X,Y),
\]
for all
\[
X,Y\in F_a.
\]
A directed transport structure is metric compatible on full fibres if this condition holds for every oriented \(1\)-cell.
\end{definition}

\begin{proposition}[Rank restrictions]
\label{prop:metric-rank-restrictions}
If \(P_{ab}:F_a\to F_b\) is metric compatible, then it is injective and
\[
\dim F_a\le\dim F_b.
\]
If both \(P_{ab}\) and \(P_{ba}\) are metric compatible on full fibres, then
\[
\dim F_a=\dim F_b
\]
and both maps are isomorphisms. Metric compatibility alone does not imply
\[
P_{ba}=P_{ab}^{-1}.
\]
\end{proposition}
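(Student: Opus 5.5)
The plan has three parts, taken in the order of the statement: injectivity, then the two-sided rank equality, then a counterexample showing that compatibility does not force reversal.

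\emph{Injectivity.} Suppose \(P_{ab}X=0\). Metric compatibility gives
\[
g_a(X,X)=g_b(P_{ab}X,P_{ab}X)=g_b(0,0)=0 .
\]
Positive definiteness of \(g_a\) (Definition of fibre metric) then forces \(X=0\). Hence \(\ker P_{ab}=\{0\}\), and rank--nullity gives \(\dim F_a=\operatorname{rank}P_{ab}\le\dim F_b\).

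\emph{Two-sided case.} Apply the first part to both \(P_{ab}\) and \(P_{ba}\). This yields \(\dim F_a\le\dim F_b\) and \(\dim F_b\le\dim F_a\), so the two dimensions are equal. An injective linear map between finite-dimensional spaces of equal dimension is surjective, so both maps are isomorphisms. In fibre language this is \(n_a=n_b\).

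\emph{Failure of reversal.} Here a counterexample is needed. I would take any edge with \(n_a=n_b=n\ge2\) and fix an isometry \(Q:F_a\to F_b\) between \((F_a,g_a)\) and \((F_b,g_b)\). Such a \(Q\) exists by mapping a \(g_a\)-orthonormal basis to a \(g_b\)-orthonormal basis; for the diagonal metric one can write \(Q\) explicitly by rescaling each \(\mathbf e_{a,b_i}\) by its \(\ell\)-weights. Set \(P_{ab}=Q\). Choose a \(g_a\)-orthogonal map \(U\ne\operatorname{Id}\) on \(F_a\), for instance a swap of two orthonormal basis vectors or a rotation in a two-plane. Set \(P_{ba}=UQ^{-1}\). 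Both maps preserve the metrics, because compositions of isometries are isometries. However,
\[
P_{ba}P_{ab}=U\ne\operatorname{Id}_{F_a},
\]
so \(P_{ba}\ne P_{ab}^{-1}\).

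The only step that needs care is making the counterexample concrete, and specifically ensuring \(n\ge2\) so that a nontrivial orthogonal \(U\) exists. This is no real obstacle: even for \(n=1\) one can take \(U=-\operatorname{Id}\), which is a nontrivial orthogonal map. I would therefore state the counterexample using \(U=-\operatorname{Id}_{F_a}\), which works in every dimension \(n\ge1\) and requires no assumption on the metric.
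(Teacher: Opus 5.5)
Your proof is correct and follows the paper's argument: injectivity from positive definiteness of \(g_a\), the dimension inequality applied in both directions, and the observation that two isometries need not be mutual inverses. The only difference is that you make the last point concrete with \(P_{ab}=Q\), \(P_{ba}=UQ^{-1}\) (for example \(U=-\operatorname{Id}_{F_a}\)), whereas the paper only asserts that the two maps may be distinct isometries.
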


\begin{proof}
If \(P_{ab}X=0\), then
\[
g_a(X,X)=g_b(P_{ab}X,P_{ab}X)=0.
\]
Positive definiteness gives \(X=0\), so \(P_{ab}\) is injective, and the dimension inequality follows. Applying the same argument to \(P_{ba}\) gives the reverse inequality. The two maps may nevertheless be distinct isometries unless reversibility is imposed separately.
\end{proof}

When transport maps act only on locality-dependent subspaces, the same condition may be imposed on those subspaces. In particular, full-fibre metric compatibility in both directions is unavailable across an edge whose endpoint fibres have unequal dimensions. Metric compatibility and reversibility are independent conditions.

\begin{remark}[Metric-compatible holonomy and finite defects]
Suppose that the transport is metric compatible along every edge of the boundary of the oriented \(2\)-cell \(f\), with transports defined on the relevant full fibres. Then the corresponding holonomy
\[
\mathcal{R}(a,f):F_a\longrightarrow F_a
\]
preserves the fibre metric,
\[
g_a\left(\mathcal{R}(a,f)X,\mathcal{R}(a,f)Y\right) = g_a(X,Y)
\]
for all
\[
X,Y\in F_a.
\]
Equivalently,
\[
\mathcal{R}(a,f)^{\!*}\mathcal{R}(a,f) = \operatorname{Id}_{F_a},
\]
where \(\mathcal{R}(a,f)^{\!*}\) denotes the adjoint with respect to \(g_a\).

The finite holonomy defect
\[
\Delta(a,f) = \mathcal{R}(a,f)-\operatorname{Id}_{F_a}
\]
is not generally skew-adjoint. Indeed,
\[
\Delta(a,f)^{\!*} = \mathcal{R}(a,f)^{-1}-\operatorname{Id}_{F_a},
\]
which need not equal
\[
-\Delta(a,f).
\]
If, in addition, the transport is reversible along the boundary, then
\[
\mathcal{R}(a,-f)=\mathcal{R}(a,f)^{-1}
\]
and the orientation-odd curvature becomes
\[
\Omega(a,f) = \frac12\left( \mathcal{R}(a,f)-\mathcal{R}(a,f)^{-1} \right).
\]
It is then skew-adjoint:
\[
\Omega(a,f)^{\!*}=-\Omega(a,f).
\]
\end{remark}

\begin{remark}[Infinitesimal curvature generators]
Suppose the holonomy lies in a neighbourhood of the identity on which a real logarithm is defined and the chosen logarithm is compatible with the adjoint operation. One may then introduce the infinitesimal generator
\[
K(a,f) = \log \mathcal{R}(a,f).
\]
For metric-compatible holonomy in this neighbourhood, the generator is skew-adjoint,
\[
K(a,f)^{\!*} = -K(a,f).
\]
Alternatively, the skew-adjoint part of the finite holonomy may be defined by
\[
\mathcal{K}_{\mathrm{skew}}(a,f) = \frac{1}{2} \left(\mathcal{R}(a,f)-\mathcal{R}(a,f)^{-1}\right).
\]
For reversible transport this is exactly the orientation-odd curvature \(\Omega(a,f)\). By contrast, \(K(a,f)\) is a logarithmic generator and \(\Delta(a,f)=\mathcal{R}(a,f)-\operatorname{Id}_{F_a}\) is the full finite holonomy defect. These three objects agree only to leading order near the identity.
\end{remark}

\subsection{Geometric data}
\label{subsec:Metrics_Geometry}

The constructions developed so far introduce five core geometric ingredients,
\[
(\mathcal{K},F,\theta,P,g),
\]
consisting of
\begin{itemize}
\item the computational complex \(\mathcal{K}\);
\item the local fibres \(F\);
\item the solder form \(\theta\);
\item the directed transport structure \(P\);
\item the fibre metric \(g\).
\end{itemize}
These five ingredients do not, by themselves, determine the weighted incidence operators or the incidence inner products. Those constructions also require
\[
\mathfrak A=\{\mathfrak A_p\}_{p=0}^{D-1}
\qquad\text{and}\qquad
\nu=\{\nu_{a|\sigma}\},
\]
respectively. Thus, a fully specified operational model has the augmented data
\[
(\mathcal K,F,\theta,P,g;\mathfrak A,\nu).
\]
Different choices give rise to different incidence-based combinatorial geometries and operators. Compatibility conditions, including metric compatibility and Cartan-type relations between torsion, curvature, and the weighted incidence operators, may be imposed according to the intended mathematical or physical setting.

\begin{remark}[Geometric synthesis]
The core \(5\)-tuple
\[
(\mathcal{K},F,\theta,P,g)
\]
combines an incidence-generated family of local coefficient spaces with a solder form, directed edge transports, and fibre metrics. It resembles local-system and cellular-sheaf constructions, but it is not identified with either structure without the additional constant-rank, invertibility, cell-stalk, and compatibility hypotheses described in Section~\ref{subsec:Connections_Definition}. The operators \((\delta^\nabla_{\mathfrak A_p},\star^\flat,\star^\sharp)\) additionally depend on \(\mathfrak A\) or \(\nu\) and should be regarded as structures of the present incidence-based framework rather than as standard sheaf operators.
\end{remark}

\section{Locality-Dependent Operator Algebras}
\label{sec:Locality}

Locality entered the preceding constructions through the incidence-generated fibres, the directed transport structure, and the anchoring data
\[
\mathfrak A_p=(S_p,\Gamma_p,w^p)
\]
of the weighted covariant incidence operators. This section isolates a second notion of locality arising entirely at the level of operators acting on scalar \(0\)-cochains. The resulting hierarchy forms a filtered associative algebra and therefore a filtered Lie algebra under commutators. No additional homotopy-Lie structure is required for the results established here.

\subsection{Graph distance and the locality filtration}
\label{subsec:Locality_Hierarchy}

\begin{definition}[Combinatorial distance]
For vertices \(a,b\in\mathcal K_0\), let \(d(a,b)\) be the minimum number of \(1\)-cells in a path of the \(1\)-skeleton joining \(a\) to \(b\), with \(d(a,b)=\infty\) when the vertices lie in different connected components. For \(r\ge0\), define
\[
N_r(a)=\{\,b\in\mathcal K_0:d(a,b)\le r\,\}.
\]
\end{definition}

\begin{definition}[Locality-\(r\) operators]
The locality-\(r\) operator space is
\[
\mathfrak D^{(r)}(\mathcal K) = \left\{
A\in\operatorname{End}\bigl(C^0(\mathcal K)\bigr):
(Af)(a)\text{ depends only on }f|_{N_r(a)}
\right\}.
\]
\end{definition}

Equivalently, after choosing the vertex basis of \(C^0(\mathcal K)\), an operator \(A\) belongs to \(\mathfrak D^{(r)}(\mathcal K)\) precisely when its matrix entries satisfy
\[
A_{ab}=0
\qquad\text{whenever}\qquad
d(a,b)>r.
\]

\begin{proposition}[Filtered operator algebra]
\label{prop:locality-filter}
For \(r,s\ge0\),
\[
\mathfrak D^{(r)}
\subseteq
\mathfrak D^{(r+1)},
\qquad
\mathfrak D^{(r)}\mathfrak D^{(s)}
\subseteq
\mathfrak D^{(r+s)},
\]
and therefore
\[
[\mathfrak D^{(r)},\mathfrak D^{(s)}]
\subseteq
\mathfrak D^{(r+s)}.
\]
Thus \(\{\mathfrak D^{(r)}\}_{r\ge0}\) is a filtration of an associative operator algebra and, under commutators, of a Lie algebra.
\end{proposition}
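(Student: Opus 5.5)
The argument works at the level of matrix entries. By the equivalent description following the definition, $A\in\mathfrak D^{(r)}$ exactly when $A_{ab}=0$ for $d(a,b)>r$. I will use two properties of the combinatorial distance: it takes values in $\mathbb Z_{\ge0}\cup\{\infty\}$, and it satisfies the triangle inequality $d(a,c)\le d(a,b)+d(b,c)$, with the convention $\infty+x=\infty$. The triangle inequality holds because concatenating a path from $a$ to $b$ with one from $b$ to $c$ gives a path from $a$ to $c$; if either distance is infinite, the inequality is trivial.

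The inclusion $\mathfrak D^{(r)}\subseteq\mathfrak D^{(r+1)}$ is immediate. If $d(a,b)>r+1$, then $d(a,b)>r$, so $A_{ab}=0$. Equivalently, $N_r(a)\subseteq N_{r+1}(a)$, so a value depending only on $f|_{N_r(a)}$ depends only on $f|_{N_{r+1}(a)}$.

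For the product, take $A\in\mathfrak D^{(r)}$ and $B\in\mathfrak D^{(s)}$, and write
\[
(AB)_{ac}=\sum_{b\in\mathcal K_0}A_{ab}B_{bc}.
\]
This is a finite sum because $\mathcal K$ is finite. Suppose $d(a,c)>r+s$. For any $b$, if both $d(a,b)\le r$ and $d(b,c)\le s$, the triangle inequality would give $d(a,c)\le r+s$, which is a contradiction. Hence in every term either $A_{ab}=0$ or $B_{bc}=0$, so $(AB)_{ac}=0$ and $AB\in\mathfrak D^{(r+s)}$.

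The commutator bound follows because $\mathfrak D^{(r+s)}$ is a linear subspace: it is defined by the vanishing of prescribed matrix entries. Both $AB$ and $BA$ lie in $\mathfrak D^{(s+r)}=\mathfrak D^{(r+s)}$, so their difference does too. For the concluding sentence, observe that $\mathfrak D^{(0)}$ contains the identity, since $d(a,a)=0$, and that the union of the $\mathfrak D^{(r)}$ is the subspace of $\operatorname{End}(C^0(\mathcal K))$ closed under composition that the filtration exhausts. When $\mathcal K$ is connected, $\mathfrak D^{(\operatorname{diam})}$ is all of $\operatorname{End}(C^0(\mathcal K))$. The containments proved above are then precisely the axioms of an increasing, multiplicatively compatible filtration of an associative algebra, and the commutator bound gives the corresponding Lie filtration.

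The one point needing care is the disconnected case. There, entries between components vanish for every finite $r$, so the union is the block-diagonal subalgebra rather than all of $\operatorname{End}(C^0(\mathcal K))$. It is still a unital associative subalgebra, and the statement is made for that algebra. Apart from this, I expect no real obstacle; the triangle-inequality step is the core of the proof.
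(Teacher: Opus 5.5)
Your proof is correct and follows essentially the paper's argument. It gets the nested inclusion from $N_r(a)\subseteq N_{r+1}(a)$, the product bound from the triangle inequality applied to the terms $A_{ab}B_{bc}$ (you argue contrapositively where the paper argues directly), and the commutator bound by applying this to $AB$ and $BA$. Your extra remarks on the identity in $\mathfrak D^{(0)}$ and the block-diagonal limit for disconnected complexes agree with the paper's subsequent finite-stabilisation remark.
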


\begin{proof}
The first inclusion follows from \(N_r(a)\subseteq N_{r+1}(a)\). If \((AB)_{ac}\ne0\), then there is a vertex \(b\) with \(A_{ab}\ne0\) and \(B_{bc}\ne0\). Hence
\[
d(a,c)\le d(a,b)+d(b,c)\le r+s,
\]
which proves \(AB\in\mathfrak D^{(r+s)}\). The commutator statement follows by applying the same result to \(AB\) and \(BA\).
\end{proof}

\begin{remark}[Finite stabilisation]
If the \(1\)-skeleton is connected with diameter \(R\), then
\[
\mathfrak D^{(R)}(\mathcal K) = \operatorname{End}\bigl(C^0(\mathcal K)\bigr).
\]
For a disconnected complex, the filtration stabilises instead at the algebra of operators that are block diagonal with respect to the connected components.
\end{remark}

\subsection{Operator representation of combinatorial vector fields}
\label{subsec:Locality_Operators}

Let
\[
X\in C^0(\mathcal K;F).
\]
Using the incidence-generated fibre basis, write
\[
X(a) = \sum_{b\in\operatorname{Adj}(a)} X_a^{\,b}\,\mathbf e_{a,b}.
\]
For adjacent vertices \(a,b\), let \(e_{ab}\) denote the underlying \(1\)-cell and define the outward difference
\[
\nabla_{ab}f = f(b) - f(a) = -\varepsilon(e_{ab},a)(\delta f)(e_{ab}).
\]
This expression is independent of the chosen global orientation of \(e_{ab}\).

\begin{definition}[Operator induced by a vector field]
Define
\[
L_X:C^0(\mathcal K)\longrightarrow C^0(\mathcal K)
\]
by
\[
(L_Xf)(a) = \sum_{b\in\operatorname{Adj}(a)} X_a^{\,b}\bigl(f(b)-f(a)\bigr).
\]
\end{definition}

Every \(L_X\) has locality radius at most one and annihilates constant cochains. Let \(\mathbf 1\) denote the constant cochain with value \(1\), and define
\[
\mathfrak L^{(r)} = \{\,A\in\mathfrak D^{(r)}:A\mathbf1=0\,\}.
\]

\begin{proposition}[Characterisation of radius-one vector-field operators]
\label{prop:locality-vector-image}
The map
\[
X\longmapsto L_X
\]
is a linear isomorphism from \(C^0(\mathcal K;F)\) onto \(\mathfrak L^{(1)}\).
\end{proposition}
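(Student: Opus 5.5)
The proof checks four things in turn: that \(X\mapsto L_X\) is linear, that it lands in \(\mathfrak L^{(1)}\), that it is injective, and that it is surjective. All four follow from matrix entries in the vertex basis of \(C^0(\mathcal K)\).

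\emph{Linearity and image.} Linearity is immediate, because \((L_Xf)(a)\) is linear in the coefficients \(X_a^{\,b}\). To see that \(L_X\) lies in \(\mathfrak L^{(1)}\), we read off its matrix:
\[
(L_X)_{ab}=X_a^{\,b}\ \text{for } b\in\operatorname{Adj}(a),
\qquad
(L_X)_{aa}=-\sum_{b\in\operatorname{Adj}(a)}X_a^{\,b},
\]
and \((L_X)_{ac}=0\) whenever \(d(a,c)>1\). This gives \(L_X\in\mathfrak D^{(1)}\). Each row sums to zero, so \(L_X\mathbf 1=0\).

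\emph{Injectivity.} If \(L_X=0\), then every off-diagonal entry vanishes, so \(X_a^{\,b}=0\) for all \(b\in\operatorname{Adj}(a)\). The basis \(\{\mathbf e_{a,b}\}\) is indexed bijectively by \(\operatorname{Adj}(a)\), so the expansion of \(X(a)\) is unique, and hence \(X(a)=0\) for every \(a\).

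\emph{Surjectivity.} Let \(A\in\mathfrak L^{(1)}\) and define
\[
X(a)=\sum_{b\in\operatorname{Adj}(a)}A_{ab}\,\mathbf e_{a,b}\in F_a .
\]
For \(c\ne a\), the entries \(A_{ac}\) and \((L_X)_{ac}\) agree. If \(c\in\operatorname{Adj}(a)\), both equal \(A_{ac}\) by construction. If \(c\) is not adjacent to \(a\), then \(d(a,c)\ge 2\), so both vanish by the locality condition \(A\in\mathfrak D^{(1)}\). For the diagonal, the condition \(A\mathbf 1=0\) gives \(A_{aa}=-\sum_{c\ne a}A_{ac}=-\sum_{b\in\operatorname{Adj}(a)}A_{ab}\), which equals \((L_X)_{aa}\). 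Hence \(A=L_X\).

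The only point that needs care is the identification of the radius-one neighbourhood \(N_1(a)\setminus\{a\}\) with \(\operatorname{Adj}(a)\). Adjacency in the closed vertex neighbourhood \(N(a)\) means joined by a \(1\)-cell, which is exactly graph distance \(1\). A cubical complex has no loops and no multiple edges between the same pair of vertices, since \(\mathbf e_{a,b}\) is indexed by the other endpoint \(b\). This is what makes the off-diagonal entries of a radius-one operator correspond one-to-one with the fibre coordinates \(X_a^{\,b}\). It also shows that the diagonal entries carry no independent data once \(A\mathbf 1=0\) is imposed, so the coefficient count per vertex is exactly \(n_a=\dim F_a\). The equality \(\dim C^0(\mathcal K;F)=\sum_a n_a=\dim\mathfrak L^{(1)}\) can serve as a consistency check.
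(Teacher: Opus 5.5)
Your proof is correct and follows the paper's argument. Both read off the matrix of $L_X$ in the vertex basis and get injectivity from the off-diagonal entries $X_a^{\,b}$; for surjectivity, both set $X_a^{\,b}=A_{ab}$ and use $A\mathbf 1=0$ to match the diagonal. Your explicit appeal to $A\in\mathfrak D^{(1)}$ to kill the entries with $d(a,c)\ge 2$, and your identification $N_1(a)\setminus\{a\}=\operatorname{Adj}(a)$, just spell out steps the paper leaves implicit. The remark about multiple edges is not actually needed: $F_a$ is indexed by $\operatorname{Adj}(a)$ by definition, and the indexing convention is a consequence of that fact rather than a reason for it.
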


\begin{proof}
The matrix of \(L_X\) satisfies
\[
(L_X)_{ab}=X_a^{\,b} \quad\text{for }b\in\operatorname{Adj}(a),
\qquad
(L_X)_{aa} = -\sum_{b\in\operatorname{Adj}(a)}X_a^{\,b},
\]
with all other entries equal to zero. Hence \(L_X\in\mathfrak L^{(1)}\), and the off-diagonal entries recover the coefficients \(X_a^{\,b}\), proving injectivity.

Conversely, if \(A\in\mathfrak L^{(1)}\), set
\[
X_a^{\,b}=A_{ab}
\qquad
(b\in\operatorname{Adj}(a)).
\]
The identity \(A\mathbf1=0\) gives
\[
A_{aa} = -\sum_{b\in\operatorname{Adj}(a)}A_{ab},
\]
so the resulting operator \(L_X\) equals \(A\).
\end{proof}

\begin{proposition}[Filtered algebra of constant-annihilating operators]
\label{prop:locality-L-filter}
For \(r,s\ge0\),
\[
\mathfrak L^{(r)}\mathfrak L^{(s)}
\subseteq
\mathfrak L^{(r+s)},
\qquad
[\mathfrak L^{(r)},\mathfrak L^{(s)}]
\subseteq
\mathfrak L^{(r+s)}.
\]
\end{proposition}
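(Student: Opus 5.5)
The plan is to reduce the statement to Proposition~\ref{prop:locality-filter} and then check separately that the annihilation condition \(A\mathbf 1=0\) is preserved.

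\emph{Step 1: locality.} Take \(A\in\mathfrak L^{(r)}\) and \(B\in\mathfrak L^{(s)}\). By definition \(\mathfrak L^{(r)}\subseteq\mathfrak D^{(r)}\) and \(\mathfrak L^{(s)}\subseteq\mathfrak D^{(s)}\). Proposition~\ref{prop:locality-filter} therefore gives
\[
AB\in\mathfrak D^{(r+s)},\qquad BA\in\mathfrak D^{(r+s)}.
\]
The same proposition gives \(\mathfrak D^{(s)}\mathfrak D^{(r)}\subseteq\mathfrak D^{(s+r)}\), and \(r+s=s+r\), so both products have locality radius at most \(r+s\). Since \(\mathfrak D^{(r+s)}\) is a linear subspace of \(\operatorname{End}(C^0(\mathcal K))\) (a condition on vanishing matrix entries), it also contains \([A,B]=AB-BA\).

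\emph{Step 2: annihilation of constants.} Associativity of composition gives
\[
(AB)\mathbf 1=A(B\mathbf 1)=A\,0=0,
\]
and symmetrically \((BA)\mathbf 1=0\). By linearity, \([A,B]\mathbf 1=0\).

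Combining the two steps gives \(AB\in\mathfrak L^{(r+s)}\), and likewise \(BA\) and \([A,B]\) lie in \(\mathfrak L^{(r+s)}\). Here one uses that \(\mathfrak L^{(r+s)}\) is the intersection of \(\mathfrak D^{(r+s)}\) with the kernel of the linear map \(A\mapsto A\mathbf 1\), hence a linear subspace.

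There is no real obstacle. The only point needing care is that the commutator bound requires both \(AB\) and \(BA\) in \(\mathfrak D^{(r+s)}\), which uses symmetry of the graph distance \(d\) (already implicit in Proposition~\ref{prop:locality-filter}). One should also note in passing that the filtration inclusion \(\mathfrak L^{(r)}\subseteq\mathfrak L^{(r+1)}\) is inherited directly from \(\mathfrak D^{(r)}\subseteq\mathfrak D^{(r+1)}\).
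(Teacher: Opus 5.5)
Your proof is correct and matches the paper's: the locality bound comes from Proposition~\ref{prop:locality-filter}, and annihilation of constants follows from $(AB)\mathbf 1=A(B\mathbf 1)=0$, and likewise for $BA$. One small point is that symmetry of $d$ is not actually needed, since applying Proposition~\ref{prop:locality-filter} with $r$ and $s$ exchanged gives $BA\in\mathfrak D^{(s+r)}$ using only the triangle inequality.
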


\begin{proof}
The locality bound follows from Proposition~\ref{prop:locality-filter}. If \(A\mathbf1=B\mathbf1=0\), then
\[
AB\mathbf1=A(B\mathbf1)=0,
\]
and similarly \(BA\mathbf1=0\). The two inclusions follow.
\end{proof}

\subsection{Leibniz defect and the pointwise algebra}
\label{subsec:Locality_Leibniz}

The scalar \(0\)-cochains carry the pointwise product
\[
(fg)(a) = f(a)g(a).
\]
For \(f,g\in C^0(\mathcal K)\), define
\[
\Lambda_X(f,g) = L_X(fg) - (L_Xf)g - f(L_Xg).
\]

\begin{proposition}[Leibniz defect]
\label{prop:leibniz-defect}
For every vertex \(a\),
\[
\Lambda_X(f,g)(a)
=
\sum_{b\in\operatorname{Adj}(a)}
X_a^{\,b}
\bigl(f(b)-f(a)\bigr)
\bigl(g(b)-g(a)\bigr).
\]
\end{proposition}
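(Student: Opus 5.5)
The plan is a direct pointwise computation from the definition of \(L_X\), organised around a single algebraic identity for products of differences. Fix a vertex \(a\) and write \(f_a=f(a)\), \(f_b=f(b)\), and similarly for \(g\). Since \(L_X\) acts vertexwise as a finite sum over \(\operatorname{Adj}(a)\), I would expand each of the three terms of \(\Lambda_X(f,g)(a)\) as a sum over the same index set with the same coefficients \(X_a^{\,b}\):
\[
L_X(fg)(a)=\sum_{b\in\operatorname{Adj}(a)}X_a^{\,b}\bigl(f_bg_b-f_ag_a\bigr),
\]
\[
\bigl((L_Xf)g\bigr)(a)=\sum_{b\in\operatorname{Adj}(a)}X_a^{\,b}\bigl(f_b-f_a\bigr)g_a,
\]
\[
\bigl(f(L_Xg)\bigr)(a)=\sum_{b\in\operatorname{Adj}(a)}X_a^{\,b}f_a\bigl(g_b-g_a\bigr).
\]
Here I use that the pointwise factors \(g(a)\) and \(f(a)\) can be moved inside the sums, because they do not depend on \(b\).

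Combining the three sums termwise, it remains to check the elementary identity
\[
f_bg_b-f_ag_a-(f_b-f_a)g_a-f_a(g_b-g_a)=(f_b-f_a)(g_b-g_a).
\]
To verify it, I would expand both sides. On the left, the terms \(f_ag_a\) combine to \(+f_ag_a\), leaving \(f_bg_b-f_bg_a-f_ag_b+f_ag_a\). This equals the expanded right-hand side. Multiplying by \(X_a^{\,b}\) and summing over \(b\) gives the stated formula.

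There is no genuine obstacle; the argument is a one-line discrete product rule. The only point needing care is bookkeeping: both pointwise products are evaluated at \(a\), so \(((L_Xf)g)(a)=(L_Xf)(a)\,g(a)\). I would also note that the definition of \(L_X\) involves no orientation, because \(\nabla_{ab}f=f(b)-f(a)\) is orientation-independent. No sign conventions from \(\varepsilon\) therefore enter.
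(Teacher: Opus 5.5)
Your proposal is correct and follows essentially the same route as the paper. The paper writes the identity $f(b)g(b)-f(a)g(a)=\bigl(f(b)-f(a)\bigr)g(a)+f(a)\bigl(g(b)-g(a)\bigr)+\bigl(f(b)-f(a)\bigr)\bigl(g(b)-g(a)\bigr)$ for each adjacent pair, then multiplies by $X_a^{\,b}$ and sums over $\operatorname{Adj}(a)$, exactly as you do.
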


\begin{proof}
For each adjacent pair \(a,b\),
\[
\begin{aligned}
f(b)g(b)-f(a)g(a)
={}&
\bigl(f(b)-f(a)\bigr)g(a)
+
f(a)\bigl(g(b)-g(a)\bigr)
\\
&+
\bigl(f(b)-f(a)\bigr)
\bigl(g(b)-g(a)\bigr).
\end{aligned}
\]
Multiplication by \(X_a^{\,b}\) and summation over \(b\in\operatorname{Adj}(a)\)
gives the result.
\end{proof}

The failure of the Leibniz rule is not merely generic: the pointwise algebra on a finite vertex set admits no non-zero derivations.

\begin{proposition}[Derivations of the pointwise algebra]
\label{prop:zero-derivations}
Every derivation
\[
D:C^0(\mathcal K)\longrightarrow C^0(\mathcal K)
\]
of the pointwise algebra is zero.
\end{proposition}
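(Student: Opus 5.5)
The argument uses the idempotent basis of the pointwise algebra. For each vertex \(a\in\mathcal K_0\), let \(\chi_a\in C^0(\mathcal K)\) be the indicator cochain with \(\chi_a(b)=\delta_{ab}\). These cochains form a basis of \(C^0(\mathcal K)\) and satisfy \(\chi_a^2=\chi_a\) and \(\chi_a\chi_b=0\) for \(a\neq b\). Since \(\mathcal K_0\) is finite, \(\mathbf 1=\sum_a\chi_a\). By linearity of \(D\), it suffices to show that \(D\chi_a=0\) for every \(a\).

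\textbf{Step 1: evaluate \(D\chi_a\) away from \(a\).} Apply the Leibniz rule to the idempotent identity \(\chi_a=\chi_a^2\):
\[
D\chi_a = 2\chi_a\,D\chi_a .
\]
At a vertex \(c\neq a\), the right-hand side vanishes because \(\chi_a(c)=0\). Hence \((D\chi_a)(c)=0\) for all \(c\neq a\).

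\textbf{Step 2: evaluate \(D\chi_a\) at \(a\).} At \(a\) itself the same identity reads
\[
(D\chi_a)(a)=2(D\chi_a)(a),
\]
so \((D\chi_a)(a)=0\). Combining the two steps gives \(D\chi_a=0\) for every \(a\), and therefore \(D=0\).

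As a consistency check, the Leibniz rule applied to \(\mathbf 1\cdot\mathbf 1=\mathbf 1\) gives \(D\mathbf 1=2D\mathbf 1\), so \(D\mathbf 1=0\), which agrees with the conclusion.

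There is no genuine obstacle. The only point needing care is that the pointwise product is commutative, so the Leibniz expansion of \(\chi_a\chi_a\) collapses to \(2\chi_a D\chi_a\). Finiteness of \(\mathcal K_0\) is used to guarantee that the indicators \(\chi_a\) span \(C^0(\mathcal K)\).

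This result clarifies Proposition~\ref{prop:leibniz-defect}. A nonzero \(L_X\) cannot be a derivation, so its Leibniz defect \(\Lambda_X\) is necessarily nonzero. This follows directly from the vanishing of all derivations and does not depend on the particular choice of \(X\).
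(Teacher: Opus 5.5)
Your proof is correct and follows essentially the same argument as the paper. Both proofs apply the Leibniz rule to the idempotent indicator cochains $q_a^2=q_a$ to get $D(q_a)=2q_aD(q_a)$, read this off at vertices $c\neq a$ and at $a$ to conclude $D(q_a)=0$, and then use that the indicators form a basis of $C^0(\mathcal K)$.
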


\begin{proof}
For each vertex \(a\), let \(q_a\) be its indicator cochain. Since \(q_a^2=q_a\), the Leibniz rule gives
\[
D(q_a) = D(q_a^2)=2q_aD(q_a).
\]
At every vertex different from \(a\), the right-hand side vanishes, while at \(a\) the identity reads
\[
D(q_a)(a) = 2D(q_a)(a).
\]
Thus \(D(q_a)=0\). The indicator cochains form a basis of \(C^0(\mathcal K)\), so \(D=0\).
\end{proof}

\begin{corollary}
The operator \(L_X\) is a derivation of the pointwise algebra if and only if \(X=0\).
\end{corollary}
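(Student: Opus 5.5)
The corollary follows by combining Proposition~\ref{prop:zero-derivations} with the injectivity part of Proposition~\ref{prop:locality-vector-image}. I would prove the two implications separately.

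For the reverse implication, suppose \(X=0\). Then every coefficient \(X_a^{\,b}\) vanishes, so \(L_X=0\). The zero operator satisfies the Leibniz rule trivially, so \(L_X\) is a derivation. Equivalently, the formula of Proposition~\ref{prop:leibniz-defect} gives \(\Lambda_X\equiv0\).

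For the forward implication, suppose \(L_X\) is a derivation of the pointwise algebra, that is, \(\Lambda_X(f,g)=0\) for all \(f,g\). Proposition~\ref{prop:zero-derivations} then forces \(L_X=0\) as an operator on \(C^0(\mathcal K)\). By Proposition~\ref{prop:locality-vector-image}, the map \(X\mapsto L_X\) is injective, since the off-diagonal matrix entries \((L_X)_{ab}=X_a^{\,b}\) recover \(X\). Hence \(X=0\).

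As an independent check, one can read the coefficients off directly. For a fixed pair \(a\), \(b\in\operatorname{Adj}(a)\), take \(f=g=q_b\). In Proposition~\ref{prop:leibniz-defect}, the term indexed by \(b\) contributes \(X_a^{\,b}\cdot 1\cdot 1\). Every other neighbour \(c\neq b\) contributes zero, because \(q_b(c)-q_b(a)=0\) when \(c,a\neq b\); here \(a\neq b\) since \(b\) is adjacent to \(a\). Thus \(\Lambda_X(q_b,q_b)(a)=X_a^{\,b}\), and vanishing of the defect forces \(X_a^{\,b}=0\).

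There is no real obstacle here. The only point requiring care is that the cubical structure guarantees a unique edge for each adjacent pair, so that the coefficients \(X_a^{\,b}\) are exactly the off-diagonal matrix entries of \(L_X\). This was already established in the proof of Proposition~\ref{prop:locality-vector-image}.
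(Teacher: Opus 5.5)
Your proof is correct and is the paper's argument: Proposition~\ref{prop:zero-derivations} forces \(L_X=0\), injectivity of \(X\mapsto L_X\) from Proposition~\ref{prop:locality-vector-image} then gives \(X=0\), and the converse is immediate. Your direct check via \(\Lambda_X(q_b,q_b)(a)=X_a^{\,b}\) is also valid, and it gives a self-contained alternative that does not need the zero-derivation result.
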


\begin{proof}
If \(L_X\) is a derivation, Proposition~\ref{prop:zero-derivations} gives \(L_X=0\), and Proposition~\ref{prop:locality-vector-image} then gives \(X=0\). The converse is immediate.
\end{proof}

\begin{remark}[Finite-difference origin of the Leibniz defect]
The defect term
\[
\Lambda_X(f,g)
\]
is quadratic in the finite differences
\[
f(b)-f(a),
\qquad
g(b)-g(a).
\]
It therefore vanishes whenever either function is locally constant along the edges contributing to \(L_X\). The obstruction to the Leibniz rule is thus a finite-locality effect arising from discrete differences rather than an artefact of a particular choice of coefficients.
\end{remark}

\subsection{Commutators and enlargement of locality}
\label{subsec:Locality_Commutators}

Proposition~\ref{prop:locality-L-filter} gives
\[
[L_X,L_Y]\in\mathfrak L^{(2)}
\]
for all \(X,Y\in C^0(\mathcal K;F)\). The radius-two bound is sharp in general.

\begin{example}[A distance-two commutator]
\label{ex:distance-two-commutator}
Let the \(1\)-skeleton contain the path
\[
a_0 - a_1 - a_2.
\]
Choose vector fields with the only non-zero coefficients
\[
X_{a_0}^{\,a_1}=1,
\qquad
Y_{a_1}^{\,a_2}=1.
\]
Then
\[
(L_Xf)(a_0)=f(a_1)-f(a_0),
\qquad
(L_Yf)(a_1)=f(a_2)-f(a_1),
\]
and all other rows of these two operators vanish. Consequently,
\[
([L_X,L_Y]f)(a_0) = f(a_2) - f(a_1).
\]
This value depends on the distance-two vertex \(a_2\), so
\[
[L_X,L_Y]\notin\mathfrak L^{(1)}
\]
for this example.
\end{example}

Thus, radius-one vector-field operators are not closed under commutators on a general \(1\)-skeleton. Their commutators, nevertheless, close exactly in the filtered family:
\[
[\mathfrak L^{(r)},\mathfrak L^{(s)}]
\subseteq\mathfrak L^{(r+s)}.
\]

\subsection{Associated graded algebra}
\label{subsec:Locality_Graded}

Set \(\mathfrak D^{(-1)}=\{0\}\) and define
\[
\operatorname{gr}\mathfrak D
=
\bigoplus_{r\ge0}
\mathfrak D^{(r)}/\mathfrak D^{(r-1)}.
\]
The product and commutator respect the filtration and therefore induce a graded associative product and a graded Lie bracket on \(\operatorname{gr}\mathfrak D\). Likewise,
\[
\operatorname{gr}\mathfrak L
=
\bigoplus_{r\ge0}
\mathfrak L^{(r)}/\mathfrak L^{(r-1)}
\]
inherits a graded Lie bracket. The Jacobi identity is exact because these brackets are induced by ordinary operator commutators.

\subsection{Remaining questions}
\label{subsec:Locality_Open}

\begin{problem}
Characterise the coefficient cancellations and graph structures for which
\[
[L_X,L_Y]\in\mathfrak L^{(1)}
\]
despite the general radius-two bound.
\end{problem}

\begin{problem}
Relate the operator filtration \(\mathfrak L^{(r)}\) to the locality prescriptions encoded by the anchor sets, paths, and weights of the weighted covariant incidence operators \(\delta^\nabla_{\mathfrak A_p}\).
\end{problem}

\begin{problem}
Define higher-degree contraction and Lie-type operations compatible with the incidence-cochain typing, the evaluation products, and the locality filtration.
\end{problem}

\section{Summary and Outlook}\label{sec:Summary}

\subsection{Summary of the construction}\label{subsec:Summary_Construction}

The present work develops a fibre-valued extension of Combinatorial Mesh Calculus, built upon the five core structures
\[
(\mathcal{K},F,\theta,P,g),
\]
together with anchoring data \(\mathfrak A\) and incidence weights \(\nu\) for those operators that require aggregation over vertex--cell incidences.

Starting from the scalar theory, local fibres are constructed from the incidence structure of the computational complex and provide the coefficient spaces for fibre-valued incidence cochains. Dual fibres and the evaluation cup product provide natural pairings between kinematic and force-like quantities and permit the construction of scalar energetic cochains. Directed transports compare neighbouring fibres and generate path transport and holonomy. Together with explicit anchor sets, paths, and normalising weights, they also define degree-raising covariant incidence operators.

The resulting framework provides combinatorial analogues of many of the structures appearing in differential geometry and geometric mechanics while remaining entirely within the setting of cell complexes. This provides the mathematical scaffold for formulating mechanics and transport directly on evolving cellular organisation rather than on an intermediate continuum.

\subsection{Comparison with smooth differential geometry}\label{subsec:Summary_Comparison}

Although many constructions introduced in this work resemble their smooth counterparts, their origins and operational properties are often substantially different.

\paragraph{Cell complexes versus manifolds.}

The starting point is an explicit cell complex rather than a smooth manifold. Topology is represented directly by incidences between cells, and no differentiable structure is assumed.

\paragraph{Hodge stars and adjoint operators.}

In smooth differential geometry, the Hodge star and codifferential are both derived from a Riemannian metric. In scalar CMC, the two constructions are introduced independently.

The Hodge-star operator is defined through the inner product and cup product, whereas the adjoint coboundary is defined solely from the inner product. Consequently, the identity 
\[
\delta^{\star} = \star\,\delta\,\star
\]
need not hold exactly.

One possible reason is that the discrete inner product, cup product, and Hodge-star operator originate from distinct combinatorial constructions, whose mutual compatibility is weaker than in the smooth theory.

\paragraph{Cup products.}

The cup product of scalar CMC is generally non-associative. This differs from the smooth exterior product, which is associative. The origin of this behaviour appears to be combinatorial. Products are assembled through incidences of finite cells rather than through pointwise multiplication on an underlying smooth algebra. As a consequence, associativity need not be preserved exactly at the discrete level.

\paragraph{Fibres and vector fields.}

The fibres introduced here may be viewed as combinatorial analogues of tangent spaces \cite{Nakahara2003, Needham2021}, but they are constructed differently. Rather than arising from a manifold structure, the fibre
\[
F_a
=
\operatorname{span}
\{\,\mathbf e_{a,b}:b\in\operatorname{Adj}(a)\,\}
\]
is generated directly by the local incidence structure of the complex.

Similarly, combinatorial vector fields are sections of these fibres. They possess a geometric interpretation analogous to vector fields in the smooth setting, but their operator representation is different.

The induced operators possess the explicit quadratic Leibniz defect of Proposition~\ref{prop:leibniz-defect}. More strongly, the finite-dimensional pointwise algebra admits no non-zero derivations, so \(L_X\) is a derivation precisely when \(X=0\).

\paragraph{Connections, torsion and curvature.}

In the smooth theory, curvature and torsion are commonly introduced through differential operators and Cartan's structure equations \cite{Nakahara2003, Needham2021}.

The present framework adopts the opposite direction. Transport maps are introduced first between local fibres. Torsion is then defined through transported solder forms, and curvature through oriented holonomies. Weighted covariant incidence operators are introduced afterwards.

This operational approach naturally leads to the question of whether differential and transport-based constructions eventually become equivalent.

\paragraph{Locality.}

Locality plays a fundamentally different role from that encountered in smooth differential geometry.

Transport maps, weighted incidence operators, and induced operator algebras depend on chosen anchor sets, paths, and combinatorial neighbourhoods. As a result, locality becomes an explicit structural parameter rather than an implicitly infinitesimal property of a smooth manifold.

\paragraph{Relation to cellular sheaves.}
The family \(F\) and the edge maps \(P\) resemble the vertex data used in cellular sheaves and local systems \cite{Shepard1985cellular, Curry2014sheaves}, but the present incidence-based construction is not automatically an object of either category. Its proposed contribution is instead the combination of incidence-generated, possibly variable-rank vertex fibres with a solder form \(\theta\), dual evaluation pairings \(\smile_{\mathrm{ev}}\), bundle Hodge operators \((\star^\flat,\star^\sharp)\), and locality-dependent directed transports. Establishing a precise functorial relation with cellular sheaves remains a separate mathematical problem.

\subsection{Open questions}
\label{subsec:Summary_Open}

The present framework remains incomplete, and several questions require further investigation.

\begin{enumerate}
\item Determine admissibility conditions for transport maps and their relation to locality, rank variation, metric compatibility, and physical interpretation.
\item Extend Proposition~\ref{prop:torsion-recovery} to a full Cartan-type structure equation and determine how the holonomy defects arise from connection data.
\item Construct a rigorously typed exterior covariant calculus on higher tensor-valued incidence cochains and define its action products.
\item Construct a full combinatorial Cartan calculus, including higher-degree interior products and Lie-type derivatives.
\item Characterise the associated graded locality algebra and relate its filtration to the anchoring data of the weighted incidence operators.
\item Identify natural geometric invariants derived from the curvature, torsion and transport structures.
\item Formulate coarse-graining and homogenization projections relating the variable-dimensional local fibres $F_a \simeq \mathbb{R}^{n_a}$ to effective continuous tangent bundles $TM \cong \mathbb{R}^D$ in the macroscopic continuum limit.
\end{enumerate}

These questions suggest that explicit cellular fabrics possess geometric and algebraic structures that are substantially richer than those captured by scalar cochain theories alone. Understanding the interaction between topology, transport, locality, and metric structure therefore remains a central problem for the further development of fibre-valued Combinatorial Mesh Calculus.

\appendix

\section{Notation}

\subsection*{Complexes and cells}

\begin{tabular}{ll}
$\mathcal{M}$ & Physical complex (simple polytopes) \\
$\mathcal{K}$ & Computational complex (Forman subdivision of $\mathcal{M},$ cubical) \\
$\mathcal{K}_p$ & Set of $p$-cells of $\mathcal{K}$ \\
$\mathcal{K}_{0,p}$ & Set of incidences $(a,\sigma)$ with $a\in \mathcal{K}_0$ and $a\prec \sigma$ \\
$\sigma,\tau,\chi$ & Generic cells of $\mathcal{K}$ \\
$a,b,c$ & Generic vertices ($0$-cells) \\
$e$ & Generic $1$-cell \\
$f$ & Generic $2$-cell \\
$\sigma\prec\tau$ & Face relation \\
$\varepsilon(\tau,\sigma)$ & Relative orientation (incidence number) \\
$\operatorname{Adj}(a)$ & Vertices adjacent to \(a\) in the \(1\)-skeleton \\
\end{tabular}

\subsection*{Fibres and fibre-valued incidence cochains}

\begin{tabular}{ll}
$F_a$ & Fibre attached to vertex $a$ \\
$F_a^\ast$ & Dual fibre \\
$\mathbf e_{a,b}$ & Fibre generator associated with oriented $1$-cell from $a$ to $b$ \\
$\mathbf e^{a,b}$ & Dual basis element in $F_a^*$\\
$F$ & Family of fibres $\{F_a\}_{a\in \mathcal{K}_0}$ \\
$C^p(\mathcal{K};F)$ & Vector-valued $p$-cochains \\
$C^p(\mathcal{K};F^\ast)$ & Covector-valued $p$-cochains \\
$C^p(\mathcal{K};\operatorname{End}(F))$ & Endomorphism-valued $p$-cochains \\
$\mathcal I^p(\mathcal K;W)$ & Unconstrained \(W\)-valued incidence functions \\
$\mathcal I^p_\pm(\mathcal K;W)$ & Orientation-even/odd incidence functions \\
$X$ & Vector-valued 0-cochain (combinatorial vector field) \\
$\eta$ & Covector-valued 0-cochain (combinatorial covector field)\\
$A$ & Endomorphism-valued 0-cochain \\
\end{tabular}

\subsection*{Cup products and pairings}

\begin{tabular}{ll}
$\smile_{\mathrm{ev}}$ & Evaluation cup product \\
$N(b,c)$ & Common vertex of perpendicular cells $b$ and $c$ \\
$\perp_{p,q}a$ & Set of perpendicular $p$- and $q$-subcells of $a$ \\
$\operatorname{rel}(a,b,c)$ & Relative orthogonal orientation \\
\end{tabular}

\subsection*{Connections and transport}

\begin{tabular}{ll}
$P_{ab}$ & Directed transport map $F_a\rightarrow F_b$ \\
$P_\gamma$ & Transport along path $\gamma$ \\
$P^\ast_{ab}$ & Dual transport \\
$\operatorname{Vert}(\rho)$ & Vertex set of a cell \(\rho\) \\
$S_p(a,\tau)$ & Anchor set for the incidence operator \\
$w^p_{a,\tau}(b,\sigma)$ & Anchor weight \\
$\mathfrak A_p$ & Degree-\(p\) anchoring data \\
$\gamma^a_{\tau,b}$ & Chosen anchor path from \(b\) to \(a\) \\
$\delta^\nabla_{\mathfrak A_p}$ & Weighted covariant incidence operator \\
\end{tabular}

\subsection*{Torsion and curvature}

\begin{tabular}{ll}
$\theta$ & Solder form \\
$\Theta$ & Torsion cochain \\
$\mathcal{R}(a,f)$ & Holonomy operator \\
$\Delta$ & Finite holonomy defect \\
$\Omega$ & Orientation-odd curvature cochain \\
\end{tabular}

\subsection*{Metric structures}

\begin{tabular}{ll}
$g_a$ & Fibre metric on $F_a$ \\
$e_{ab}$ & \(1\)-cell joining adjacent vertices \(a\) and \(b\) \\
$\ell_{ab}$ & Edge measure \(\mu(e_{ab})\) \\
$R_a$ & Positive-definite directional coupling matrix \\
$\nu_{a|\sigma}$ & Normalised incidence weight \\
$g_a^{-1}$ & Dual metric on $F_a^\ast$ \\
$g_a^\flat$ & Musical isomorphism $F_a\rightarrow F_a^\ast$ \\
$g_a^\sharp$ & Inverse musical isomorphism $F_a^\ast\rightarrow F_a$ \\
$\star_p^\flat$ & Vector-to-covector Hodge star \\
$\star_p^\sharp$ & Covector-to-vector Hodge star \\
\end{tabular}

\subsection*{Locality-dependent algebra}

\begin{tabular}{ll}
$\nabla_{ab}f$ & Outward difference \(f(b)-f(a)\) \\
$L_X$ & Operator induced by a combinatorial vector field \\
$\Lambda_X$ & Leibniz defect \\
$\mathfrak D^{(r)}$ & Locality-$r$ operator space \\
$\mathfrak L^{(r)}$ & Locality-\(r\) operators annihilating constants \\
$N_r(a)$ & Combinatorial $r$-neighbourhood of $a$ \\
\end{tabular}

\bibliographystyle{unsrt}
\bibliography{references}

\end{document}